\newtheorem{theorem}{Theorem}[section]
\newtheorem{lemma}[theorem]{Lemma}
\newtheorem{proposition}[theorem]{Proposition}
\newtheorem{corollary}[theorem]{Corollary}
\newtheorem{definition}{Definition}[section]
\newcommand{\del}{\partial}
\newcommand{\ra}{\rightarrow}
\newcommand{\alp}{\alpha}
\newcommand{\eps}{\ensuremath{\varepsilon}}
\newcommand{\R}{\ensuremath{\mathbb{R}}}
\newcommand{\Levy}{\ensuremath{\mathcal{L}}}
\newcommand{\dif}{\mathrm{d}}
\theoremstyle{remark} \newtheorem{remark}[theorem]{Remark} \theoremstyle{definition} 
\numberwithin{equation}{section}
\begin{document}

\title[Periodic fractional conservation laws]{On the spectral
  vanishing viscosity method for periodic fractional conservation laws}

\author[{S.~Cifani}]{{Simone Cifani}}
\address[Simone Cifani]{\\ Department of Mathematics\\ Norwegian University of Science and Technology (NTNU)\\
 N-7491 Trondheim, Norway}
 \email[]{simone.cifani\@@math.ntnu.no}
\urladdr{http://www.math.ntnu.no/\~{}cifani/}

\author[{E.R.~Jakobsen}]{{Espen R. Jakobsen}}
\address[Espen R. Jakobsen]{\\ Department of Mathematics\\ Norwegian University of Science and Technology (NTNU)\\
 N-7491 Trondheim, Norway}
\email[]{erj\@@math.ntnu.no}
\urladdr{http://www.math.ntnu.no/\~{}erj/}

\keywords{Fractional/fractal conservation laws, entropy solutions,
  Fourier spectral methods, spectral vanishing viscosity, convergence,
  error estimate.}
\subjclass[2010]{
65M70, 
35K59, 
35R09; 
65M15, 
65M12, 
35K57, 
35R11. 
}

\thanks{This research was supported by the Research Council of Norway (NFR) through the project "Integro-PDEs:
Numerical Methods, Analysis, and Applications to Finance". }

\begin{abstract}
We introduce and analyze a spectral vanishing viscosity approximation of periodic fractional conservation laws. The fractional part of these equations can be a
fractional Laplacian or other non-local operators that are generators of pure jump L\'{e}vy processes. To accommodate for shock solutions, we first extend to
the periodic setting the Kru\v{z}kov-Alibaud entropy formulation and prove well-posedness. Then we introduce the numerical method, which is a non-linear
Fourier Galerkin method with an additional spectral viscosity term. This type of approximation was first introduced by Tadmor for pure conservation laws. We
prove that this {\em non-monotone} method converges to the entropy solution of the problem, that it retains the spectral accuracy of the Fourier method, and
that it diagonalizes the fractional term reducing dramatically the computational cost induced by this term. We also derive a robust $L^1$-error estimate, and
provide numerical experiments for the fractional Burgers' equation.
\end{abstract}

\maketitle

\section{Introduction}
In this paper we are concerned with a spectral vanishing viscosity
(henceforth SVV) approximation for periodic solutions of non-local or
fractional conservation laws of the form
\begin{equation*}
\left\{
\begin{array}{ll}
\partial_tu+\partial_x\cdot f(u)=-(-\Delta)^{\lambda/2}u,&(x,t)\in D_T\\
u(x,0)=u_{0}(x), &x\in\Lambda,
\end{array}
\right.
\end{equation*}
for $\lambda\in(0,2)$, or more generally, for
\begin{equation}\label{1}
\left\{
\begin{array}{ll}
\partial_tu+\partial_x\cdot f(u)=\Levy^\mu[u],&(x,t)\in D_T\\
u(x,0)=u_{0}(x), &x\in\Lambda,
\end{array}
\right.
\end{equation}
where $D_T=\Lambda\times(0,T)$ and $\Lambda=(0,2\pi)^d$, and $\Levy^\mu[\cdot]$ is a non-local (L\'{e}vy type) operator defined as
\begin{align}\label{non-loc-op}
\Levy^\mu[\phi(\cdot)](x)=\int_{|z|>0}\phi(x+z)-\phi(x)-z\cdot\partial_x\phi(x)\,\mathbf{1}_{|z|<1}\ \dif\mu(z),
\end{align}
where $\mathbf{1}(\cdot)$ is the indicator function. Throughout the paper we assume that
\begin{align}
&f=(f_1,\ldots,f_d)\text{ with }f_j\in C^{s}(\R)\text{ for all } j=1,\ldots,d \text{ ($s$ to be defined)};\label{flux}\tag{\textbf{A}.1}\\
&\text{$\mu\geq 0$ is a Radon measure such that $\int_{|z|>0}|z|^2\wedge 1 \ \dif\mu(z)<\infty$};\label{levy-measure}\tag{\textbf{A}.2}\\
&u_0\in L^\infty(\Lambda)\cap BV(\Lambda), \text{ $u_0$ is $\Lambda$-periodic}.\label{in_condition}\tag{\textbf{A}.3}
\end{align}
Here and in the rest of the paper, $a\wedge b=\min(a,b)$,
\begin{equation*}
\begin{split}
\partial_t=\frac{\partial}{\partial
  t},\quad\partial_j=\frac{\partial}{\partial
  x_j}\quad\text{and}\quad\partial_x=(\partial_1,\partial_2,\ldots,\partial_d).
\end{split}
\end{equation*}

Integro-PDEs like \eqref{1} typically model anomalous
convection-diffusion phenomena. When $\mu=\pi_\lambda$ is defined by
\begin{equation}\label{frac_lap}
\begin{split}
d\pi_{\lambda}(z)=\frac{c_\lambda}{|z|^{d+\lambda}}dz,\qquad c_\lambda>0\text{ and }\lambda\in(0,2),
\end{split}
\end{equation}
then $\Levy=-(-\Delta)^{\lambda/2}$ and the equation finds applications in e.g.~over-driven detonation in gases \cite{Clavin} and anomalous diffusion in
semiconductor growth \cite{Woyczynski}. Applications in dislocation dynamics, hydrodynamics and molecular biology can also be found, see e.g.~the references in
\cite{Alibaud,Droniou/Gallouet/Vovelle}. Many more applications can be found if asymmetric measures $\mu$ are allowed. For example, we cover the (linear)
option pricing equations for all L\'{e}vy models used in mathematical finance \cite{CT:Book,Schoutens}, if
\begin{equation}\label{asym}
\begin{split}
d\mu(z)=g(z)\,d\pi_{\lambda}(z),
\end{split}
\end{equation}
for some possibly asymmetric locally Lipschitz continuous
function $g(\cdot)$. An
example is the one-dimensional ($d=1$) CGMY model where
\begin{equation*}
g(z)=\left\{
\begin{split}
&Ce^{-G|z|}&\text{for $z>0$,}\\
&Ce^{-M|z|}&\text{for $z<0$,}
\end{split}
\right.
\end{equation*}
for positive constants $C,G,M$ (and $Y=\lambda$). In general the non-local operator $\Levy$ is the generator of a pure jump L\'{e}vy process, and conversely,
any L\'{e}vy process will have generator like $\Levy$ when \eqref{levy-measure} is satisfied. We refer to the books \cite{Ap:Book,CT:Book,Sato} for more
information about L\'{e}vy processes and their many applications. The most general L\'{e}vy measures for which the results of this paper applies, are L\'{e}vy
measures $\mu$ that can be decomposed as
\begin{equation}\label{splitt}
\begin{split}
\mu=\mu_s+\mu_n,
\end{split}
\end{equation}
where
 \begin{equation}\label{rev_2}
\begin{split}
\mu_s,\mu_n\geq0,\quad \mu_s \text{ is symmetric$\quad$and}\quad\int_{|z|>0}|z|\wedge1\ \dif \mu_n(z)<\infty.
\end{split}
\end{equation}
See Section \ref{sec:asymm_approx} for statements of results and remarks. This class possibly includes all L\'{e}vy measures, but we have so far not found a
proof of this. At least it includes all the L\'{e}vy measures found in finance, see Remark \ref{rem8}, and also many singular measures like
e.g.~delta-measures.

It is important to note that non-linear equations like \eqref{1} do not admit classical solutions in general, and that shock discontinuities can develop even
from regular initial conditions. This is well known for pure conservations laws (where $\Levy=0$), see e.g.~\cite{Holden/Risebro}. For fractional conservation
laws where $\Levy=-(-\Delta)^{\lambda/2}$, it is shown in recent works that solutions are smooth for $\lambda\in[1,2)$
\cite{ChCz09,Droniou/Gallouet/Vovelle,KNS08}. However, when $\lambda\in(0,1)$, the fractional diffusion is too weak to prevent shock discontinuities from
forming, see \cite{Alibaud,DDL09,KNS08}. In some cases however, these shocks are smoothed out over time \cite{Chan}. When shocks form, weak solutions become
non-unique and entropy conditions are needed to select the physically correct solution -- the entropy solution. The well known Kru\v{z}kov entropy solution
theory for conservation laws was extended to fractional conservation laws in \cite{Alibaud}. This extension relies on new ideas for the fractional term and is
strongly influenced by the viscosity solution theory for fractional Hamilton-Jacobi-Bellman equations. Extensions of the Kru\v{z}kov-Alibaud theory to general
L\'{e}vy operators and even non-linear fractional terms can be found in \cite{CiJa10,KU09}.

In this paper we deal with a SVV (spectral vanishing viscosity) approximation of $\Lambda$-periodic entropy solutions of \eqref{1}. The method is a Fourier
Galerkin method with an additional spectral viscosity term. Because of the formation of shocks in the solutions of \eqref{1}, it is very difficult to devise a
convergent {\em and} spectrally accurate numerical approximation of this equation. This has to do with the fact that Fourier spectral methods support spurious
Gibbs oscillations, and thus fails to converge strongly toward discontinuous solutions. It is well known that such methods need to be augmented by some kind of
vanishing viscosity in order to achieve convergence. But the standard vanishing viscosity method is  not spectrally accurate. To overcome these problems, we
use the SVV approximation developed by Tadmor in \cite{Tadmor}, cf.~also \cite{Tadmor_MAIN,Maday/Tadmor,Schochet,Tadmor_2} and the books
\cite{Benyu,Canuto/Quarteroni}. To suppress spurious oscillations without sacrificing the overall spectral accuracy of the method, Tadmor adds a modified
viscosity term, which in Fourier space only affects high frequencies. There are two parameters involved in this approach, the coefficient of the viscosity term
$\eps$ and the size $m$ of the viscosity free spectrum. Spectral accuracy and convergence toward the unique, possibly discontinuous, entropy solution, then
follows by imposing appropriate conditions on $\epsilon$ and $m$. We also like to mention another important feature of the method. In all cases, it
diagonalizes the fractional term and hence reduces dramatically the computational cost induced by this term. In our rather naive implementation for the
fractional Burgers' equation, the SVV method turned out to be orders of magnitude faster than a Discontinuous Galerkin approximation of the same equation where
the fractional term gives full matrices.

When equation \eqref{1} is linear,  $f(u)=u$, or when it is local $\Levy=0$, there is a vast literature on numerical methods and analysis, some methods and
many references can be found e.g.~in \cite{Benyu,CT:Book,Holden/Risebro,Schoutens}. In the general case however, there is not much work on numerical methods,
we only know of the papers \cite{CiJaKa10,Dr10,DeRo04}.  Difference methods are introduced in \cite{Dr10} for equation \eqref{1}, and in \cite{DeRo04} for an
equation similar to \eqref{1} from radiation hydrodynamics. In \cite{Dr10}, the first general convergence result for monotone schemes is obtained. Finally, in
\cite{CiJaKa10}, a Discontinuous Galerkin approximation of \eqref{1} is analyzed and a Kuznetsov type of theory is established and used to derive error
estimates. A periodic extension of this theory will be used to find error estimates in this paper.

Throughout the paper we will use the following additional notation. A subscript $p$ indicates $\Lambda$-periodicity in the space variables (i.e.~in
$L^\infty_p$ or  $C^\infty_p$). Here $\Lambda$-periodic means $2\pi$-periodic in each coordinate direction.  As a generic constant we use $C$. Note that the
value of $C$ may change from line to line and expression to expression. We also need notation for high order derivatives and their norms. Let
$\alp=(\alp_1,\dots,\alp_d)$ be a multi index, then
$$ \partial_x^\alp=\partial_1^{\alp_1}\partial_2^{\alp_1}\cdots\partial_d^{\alp_d},
\quad\partial_x^s=\bigcup_{|\alp|=s}\Big\{\del_x^\alp\Big\},\quad\text{and}\quad\|\del^s_x\phi\|_{L^p}^p=\sum_{|\alp|=s}\|\del_x^\alp\phi\|_{L^p}^p.$$
Remember that $\alp_j\geq0$, $|\alp|=\alp_1+\dots+\alp_d$, and that
$x^\alp=x_1^{\alp_1}\cdots x_d^{\alp_d}$ for any $x\in\R^d$.

The rest of this paper is organized as follows. In Section \ref{sec:ent} we introduce an entropy formulation for periodic solutions \eqref{1}, and give a
$L^1$-contraction and uniqueness result. In the same section we introduce the classical vanishing viscosity approximation of \eqref{1}, and show convergence
towards \eqref{1} with optimal $L^1$ error estimate. As a corollary we get existence for \eqref{1}. The proofs rely on the Kru\v{z}kov's doubling of variables
device \cite{Alibaud,Kruzkov} and Kuznetsov type of arguments \cite{CiJaKa10,Kuznetsov}, and are given in the Appendix. The SVV approximation of \eqref{1} is
introduced in Section \ref{sec:approx}, and we show that it is
spectrally accurate and that it diagonalizes the non-local
operator. In sections \ref{sec:trunc_err}--\ref{sec:conv_anal}, we assume that
the measure $\mu$ is symmetric. In Section \ref{sec:trunc_err} we prove an energy estimate for the SVV method. Along with results from \cite{Tadmor_MAIN}, this
allows us to control the so-called "truncation error", the spectral projection error coming from the non-linear term.  In Section \ref{sec:aprior} we prove a
priori $L^\infty$, $BV$, and time regularity estimates for the SVV method, and obtain compactness. In Section \ref{sec:conv_anal} we prove that the SVV method
converge to the classical vanishing viscosity method from Section \ref{sec:ent}. Combined with the results of that section, it follows that the SVV method
converges to the entropy solution of \eqref{1}. In the process, we also prove the optimal $L^1$-rate of convergence for our SVV approximation. We solve
numerically, using our SVV method, the the fractional Burgers' equation in Section \ref{sec:num_ex}. Finally, in Section \ref{sec:asymm_approx} we extend the
results in the previous sections to allow for asymmetric measures $\mu$.

\section{Entropy formulation for periodic solutions}\label{sec:ent}
In this section we introduce an entropy formulation for $\Lambda$-periodic
solutions of the initial value problem \eqref{1}. To this end, we
write the operator $\Levy^\mu[\cdot]$ as
\begin{equation*}
\begin{split}
\Levy^\mu[\phi]=\Levy_r^\mu[\phi]+\Levy^{\mu,r}[\phi]-\gamma_\mu^r\cdot\partial_x\phi,
\end{split}
\end{equation*}
where
\begin{equation*}
\begin{split}
\Levy_{r}^\mu[\phi(\cdot)](x)&=\int_{|z|\leq r}\phi(x+z)-\phi(x)-z\cdot\partial_x\phi(x)\,\mathbf{1}_{|z|<1}\ \dif\mu(z),\\
\Levy^{\mu,r}[\phi(\cdot)](x)&=\int_{|z|>r}\phi(x+z)-\phi(x)\ \dif\mu(z),\\
\gamma_\mu^r&=\int_{r<|z|<1}z\ \dif\mu(z).
\end{split}
\end{equation*}
If $r>1$, we take $\gamma_\mu^r=0$. The adjoint of $\Levy^\mu[\cdot]$ takes the form
\begin{equation*}
\begin{split}
\Levy^{\ast,\mu}_r[\phi]=\Levy_r^{\ast,\mu}[\phi]+\Levy^{\ast,\mu,r}[\phi]+\gamma_\mu^r\cdot\partial_x\phi,
\end{split}
\end{equation*}
where
\begin{equation*}
\begin{split}
\Levy_{r}^{\ast,\mu}[\phi(\cdot)](x)&=\int_{|z|\leq r}\phi(x-z)-\phi(x)+z\cdot\partial_x\phi(x)\,\mathbf{1}_{|z|<1}\ \dif\mu(z),\\
\Levy^{\ast,\mu,r}[\phi(\cdot)](x)&=\int_{|z|>r}\phi(x-z)-\phi(x)\ \dif\mu(z).
\end{split}
\end{equation*}
We also let $\eta$, $\eta'$, and $q$ denote the functions
\begin{equation*}
\begin{split}
\eta(u,k)=|u-k|,\quad\eta'(u,k)=\mathrm{sgn}(u-k),\quad q_{j}(u,k)=\eta'(u,k)\,(f_j(u)-f_j(k)).
\end{split}
\end{equation*}
We now define the solution concept we will use in this paper.

\begin{definition}\emph{(Periodic entropy solutions)}\label{def:alibaud_periodic}
A function $u$ is a periodic entropy solution of the initial value problem \eqref{1} provided that
\begin{itemize}
\item[\emph{i)}]$u\in C([0,T];L_p^{\infty}(\R^d))$;
\item[\emph{ii)}]for all $k\in\R$, all $r>0$, and all nonnegative test functions $\varphi\in
C_p^{\infty}(\R^d\times(0,T))$,
\begin{equation}\label{entropy_ineq}
\begin{split}
&\iint_{D_T}\eta(u,k)\,\partial_t\varphi+q(u,k)\cdot\partial_x\varphi\\
&\qquad+\eta(u,k)\,\Levy^{\ast,\mu}_r[\varphi]+\eta'(u,k)\,\Levy^{\mu,r}[u]\,\varphi+\eta(u,k)\,\gamma_\mu^r\cdot\partial_x\varphi\ \dif x\,\dif t\geq0;
\end{split}
\end{equation}
\item[\emph{iii)}]$\mathrm{esslim}_{t\rightarrow0}\|u(\cdot,t)-u_0(\cdot)\|_{L^1(\Lambda)}=0$.
\end{itemize}
\end{definition}
\begin{remark}
In the entropy inequality \eqref{entropy_ineq} it is easy to see that
all the terms, except possibly the $\Levy^{\mu,r}$-term, are well defined and
$\Lambda$-periodic in view of \emph{i)}. The problem with the
$\Levy^{\mu,r}$-term is that we integrate a Lebesgue measurable
function w.r.t. a Radon measure $\mu$. But the term is still well defined
because the integrand of $\Levy^{\mu,r}[u]$ is measurable w.r.t.~the product
measure $\dif \mu(z)\dif x\dif t$. This is true because the integrand is
the $\dif \mu(z)\dif x\dif t$-a.e.~limit of continuous functions, a
fact which  readily follows from the
fact that $u$ is the $\dif x\dif t$-a.e.~limit of smooth functions.
By \emph{i)}, ({\bf A}.3), and Fubini, we then find that $\Levy^{\mu,r}[u]\in
C([0,T];L^\infty_p(\R^d))$.
\end{remark}

 We now state the
following central result:
\begin{theorem}\emph{($L^1$-contraction)}\label{th:uniqueness}
Let $u$ and $v$ be two entropy solutions of the initial value problem \eqref{1} with initial data $u_0$ and $v_0$. Then, for a.e.~$t\in(0,T)$,
\begin{align}\label{contraction_periodic}
\|u(\cdot,t)-v(\cdot,t)\|_{L^1(\Lambda)}\leq\|u_{0}-v_{0}\|_{L^1(\Lambda)}.
\end{align}
\end{theorem}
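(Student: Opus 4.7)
My strategy is the doubling-of-variables device of Kru\v{z}kov, enhanced for the nonlocal term by the convexity trick of Alibaud. Writing $u=u(x,t)$ and $v=v(y,s)$, I would apply the entropy inequality \eqref{entropy_ineq} to $u$ with $k=v(y,s)$ and test function $\varphi(x,t,y,s)$ (viewed as a function of $(x,t)$), and symmetrically to $v$ with $k=u(x,t)$ and $\varphi$ (viewed in $(y,s)$). Integrating each against the remaining pair of variables and adding, the time-derivative and convective parts reduce, via the standard Kru\v{z}kov cancellations, to an integrand of the form $|u-v|(\partial_t+\partial_s)\varphi+q(u,v)\cdot(\partial_x+\partial_y)\varphi$.

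For the large-jump contribution $\eta'(u,v)\Levy^{\mu,r}_x[u]\,\varphi$ I would use, pointwise under the $\dif\mu$-integral, the convexity inequality
\[\eta'(a,k)(b-a)\leq\eta(b,k)-\eta(a,k),\]
immediate from convexity of $\eta(\cdot,k)=|\cdot-k|$ and the positivity of the $\dif\mu$-kernel. This yields $\eta'(u,v)\Levy^{\mu,r}_x[u]\leq\Levy^{\mu,r}_x[|u-v|]$ and its $v$-analogue. Transferring the resulting operators onto $\varphi$ by duality and combining with the small-jump term $\eta(u,v)\Levy^{\ast,\mu}_r[\varphi]$, the identity $\Levy^{\ast,\mu}_r+\Levy^{\ast,\mu,r}=\Levy^{\ast,\mu}-\gamma^r_\mu\partial_x$ makes the drift pieces $|u-v|\gamma^r_\mu(\partial_x+\partial_y)\varphi$ cancel exactly, so that the doubled inequality reduces, independently of the splitting parameter $r$, to
\[\iiiint|u-v|(\partial_t+\partial_s)\varphi+q(u,v)\cdot(\partial_x+\partial_y)\varphi+|u-v|(\Levy^{\ast,\mu}_x+\Levy^{\ast,\mu}_y)[\varphi]\,\dif x\,\dif y\,\dif t\,\dif s\geq 0.\]

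I would then specialize to the tensor-product mollifier $\varphi(x,t,y,s)=\omega_\delta(x-y)\omega_\delta(t-s)\psi(t)$, with $\omega_\delta\geq 0$ an approximation of the Dirac mass and nonnegative $\psi\in C_c^\infty((0,T))$. Periodicity is decisive here, as it permits $\psi$ to depend only on time and thereby eliminates the usual spatial cutoff. Because $\omega_\delta$ depends only on $x-y$, the convective term vanishes identically since $(\partial_x+\partial_y)\omega_\delta(x-y)\equiv 0$, while $(\Levy^{\ast,\mu}_x+\Levy^{\ast,\mu}_y)\omega_\delta(x-y)$ reduces, by translation invariance of $\Levy^\mu$, to the symmetric second difference $\int[\omega_\delta(x-y-z)+\omega_\delta(x-y+z)-2\omega_\delta(x-y)]\,\dif\mu(z)$. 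A change of variables in $y$ (allowed by periodicity) rewrites the nonlocal contribution as $\int\omega_\delta(w)[H(w+z)+H(w-z)-2H(w)]\,\dif w\,\dif\mu(z)$ with $H(w)=\int|u(x,t)-v(x-w,s)|\,\dif x$; passing $\delta\to 0$ in space and time, this is controlled via the $L^1$-modulus of continuity of $u,v$ and the $BV$ regularity inherited from $u_0,v_0$, and ultimately vanishes in a manner consistent with the formal identity $\int_\Lambda\Levy^\mu[|u-v|]\,\dif x=0$ (which holds by periodicity). The limit inequality reads $\iint_{D_T}|u-v|\partial_t\psi\,\dif x\,\dif t\geq 0$ for every nonnegative $\psi\in C_c^\infty((0,T))$; approximating $\mathbf{1}_{(\tau,t)}$ by such $\psi$ and letting $\tau\to 0^+$ with the initial-trace condition \emph{iii)} of Definition \ref{def:alibaud_periodic} yields \eqref{contraction_periodic}.

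The principal obstacle is the rigorous justification of the limit $\delta\to 0$ in the nonlocal term: the naive mollifier bound $O(\|D^2\omega_\delta\|_\infty|z|^2)$ blows up like $\delta^{-(d+2)}$, so the cancellation must be extracted from the combined structure of the second difference $H(\cdot+z)+H(\cdot-z)-2H(\cdot)$ together with the $L^1$-modulus of $u,v$, rather than from a Taylor estimate on $\omega_\delta$ alone. For asymmetric $\mu$ no essentially new idea is needed since the drift identity has already absorbed $\gamma^r_\mu$; the splitting \eqref{splitt}--\eqref{rev_2} is only used to ensure that the $BV$ estimates propagate through the viscous approximation introduced in Section \ref{sec:ent} on which this passage to the limit relies.
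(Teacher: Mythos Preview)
Your overall strategy is correct, but there is a genuine gap in the handling of the nonlocal term, and it is precisely the step you flag as the ``principal obstacle''. The issue is not a missing estimate but a suboptimal use of convexity that throws away the cancellation you need.

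You apply the convexity inequality \emph{separately} to the $u$-term and the $v$-term, obtaining $\eta'(u,v)\Levy^{\mu,r}_x[u]\leq\Levy^{\mu,r}_x[|u(\cdot)-v(y)|]$ and its analogue in $y$; after duality this produces the sum $\Levy^{\ast,\mu}_x+\Levy^{\ast,\mu}_y$ acting on $\varphi$. The paper instead first observes that
\[
\Levy^{\mu,r}_x[u](x)-\Levy^{\mu,r}_y[v](y)=\int_{|z|>r}\big[(u(x+z)-v(y+z))-(u(x)-v(y))\big]\,\dif\mu(z)=:\tilde\Levy^{\mu,r}[u-v](x,y),
\]
a \emph{diagonal} operator shifting $(x,y)\mapsto(x+z,y+z)$, and only \emph{then} applies convexity once: $\eta'(u,v)\tilde\Levy^{\mu,r}[u-v]\leq\tilde\Levy^{\mu,r}[|u-v|]$. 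Duality yields the diagonal adjoint $\tilde\Levy^{\ast,\mu}$ on $\psi$, and after sending $r\to0$ the whole nonlocal contribution becomes $|u-v|\,\tilde\Levy^{\ast,\mu}[\psi]$. Since $\psi$ depends on $(x,y)$ only through $x-y$, one has $\psi(x-z,y-z)=\psi(x,y)$ and hence $\tilde\Levy^{\ast,\mu}[\psi]\equiv0$ \emph{exactly}; no limiting argument, no $BV$ bound, no second-difference estimate is needed.

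Your route genuinely fails for singular measures. The sum $(\Levy^{\ast,\mu}_x+\Levy^{\ast,\mu}_y)[\omega_\delta(x-y)]$ is the second difference $\int[\omega_\delta(\cdot-z)+\omega_\delta(\cdot+z)-2\omega_\delta(\cdot)]\,\dif\mu$, which after your change of variables and $\delta\to0$ formally tends to $\int[H(z)+H(-z)-2H(0)]\,\dif\mu(z)$ with $H(w)=\int_\Lambda|u(x)-v(x-w)|\,\dif x$. This is \emph{not} related to $\int_\Lambda\Levy^\mu[|u-v|]\,\dif x=0$: the latter involves $|u(x)-v(x)|$, whereas $H(w)$ involves the \emph{shifted} $v(x-w)$, and there is no periodicity cancellation. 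A $BV$ bound gives only $|H(z)-H(0)|\leq C|z|$, which is not integrable against $\dif\mu$ near the origin when $\int_{|z|<1}|z|\,\dif\mu=\infty$ (e.g.\ $-(-\Delta)^{\lambda/2}$ with $\lambda\in[1,2)$). The fix is simply to combine before applying convexity, exactly as above.
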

The proof will be given in Appendix \ref{app:1}.
Uniqueness for periodic entropy solutions of \eqref{1} immediately
follows by setting $u_0=v_0$.
\begin{corollary}\emph{(Uniqueness)} There is at most one entropy solution of \eqref{1}.
\end{corollary}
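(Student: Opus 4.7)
The plan is to apply Theorem \ref{th:uniqueness} directly with coinciding initial data. Let $u$ and $v$ be two periodic entropy solutions of \eqref{1} in the sense of Definition \ref{def:alibaud_periodic}, both corresponding to the same initial datum $u_0$. Setting $v_0 = u_0$ in the contraction estimate \eqref{contraction_periodic} gives
\begin{equation*}
\|u(\cdot,t) - v(\cdot,t)\|_{L^1(\Lambda)} \;\leq\; \|u_0 - u_0\|_{L^1(\Lambda)} \;=\; 0
\end{equation*}
for a.e.\ $t \in (0,T)$, so $u(\cdot,t) = v(\cdot,t)$ in $L^1(\Lambda)$ for a.e.\ $t$.

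To upgrade this almost-everywhere identification to an equality of $u$ and $v$ as elements of $C([0,T];L^\infty_p(\R^d))$, I use part i) of Definition \ref{def:alibaud_periodic}: since $u$ and $v$ are continuous in $t$ with values in $L^\infty_p(\R^d)$, and since $\Lambda$ has finite Lebesgue measure (so that $L^\infty_p(\R^d)$ embeds continuously into $L^1(\Lambda)$ on each period), the map $t \mapsto \|u(\cdot,t) - v(\cdot,t)\|_{L^1(\Lambda)}$ is continuous on $[0,T]$. A nonnegative continuous function that vanishes a.e.\ vanishes identically, whence $u = v$ on $[0,T]$. There is no genuine obstacle in this argument — all the analytic content has been absorbed into Theorem \ref{th:uniqueness}, and the corollary amounts to nothing more than plugging in $v_0 = u_0$ together with a standard continuity upgrade.
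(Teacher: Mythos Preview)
Your proof is correct and matches the paper's approach: the paper simply notes that uniqueness ``immediately follows by setting $u_0=v_0$'' in Theorem~\ref{th:uniqueness}. Your additional continuity upgrade from a.e.\ $t$ to all $t$ is a harmless elaboration the paper omits.
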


We now consider the vanishing viscosity approximation of \eqref{1},
\begin{equation}\label{prob:viscous}
\left\{
\begin{array}{ll}
\partial_tu_\epsilon+\partial_x\cdot f(u_\epsilon)=\Levy^\mu[u_\epsilon]+\epsilon\,\Delta u_\epsilon&(x,t)\in D_T,\\
u_\epsilon(x,0)=u_{0}(x)&x\in\Lambda.
\end{array}
\right.
\end{equation}
In this paper we always assume that this problem admits a unique classical solution $u_\epsilon$. This is of course true, but a proof lays outside the scope of
this paper. Remark \ref{remVV} below provides some ideas on how to prove this result. We now give an estimate on the rate of convergence of $u_\eps$ toward the
entropy solution $u$ of \eqref{1}.
\begin{theorem}[Convergence rate I]\label{lemma_rate}
Let $u$ be the periodic entropy solution of \eqref{1}, and $u_{\epsilon}$ be a
smooth solution of \eqref{prob:viscous}. Then,
\begin{equation}\label{j4}
\begin{split}
\|u(\cdot,t)-u_{\epsilon}(\cdot,t)\|_{L^1(\Lambda)}\leq C\,\sqrt{\epsilon}.
\end{split}
\end{equation}
\end{theorem}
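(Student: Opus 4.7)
\medskip

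The plan is to use a Kuznetsov-type doubling-of-variables argument, following essentially the same scheme that (according to the paper) will be used in Appendix \ref{app:1} to establish the $L^1$-contraction of Theorem \ref{th:uniqueness}, but now comparing the entropy solution $u$ with the smooth viscous solution $u_\epsilon$. Since $u_\epsilon$ is smooth, multiplying the viscous equation by $\eta'(u_\epsilon-k)$ and using the chain rule, together with the standard inequalities $\eta'(u_\epsilon-k)\,\Levy^\mu[u_\epsilon]\leq \Levy^\mu[\eta(u_\epsilon,k)]$ and $\eta'(u_\epsilon-k)\,\Delta u_\epsilon\leq \Delta\eta(u_\epsilon,k)$ (valid for smooth $u_\epsilon$), one gets an entropy identity/inequality for $u_\epsilon$ with the extra viscous contribution $\epsilon\,\eta(u_\epsilon,k)\,\Delta\varphi$ on the right-hand side.

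I would then test the entropy inequality for $u=u(x,t)$ with $k=u_\epsilon(y,s)$ and the entropy inequality for $u_\epsilon=u_\epsilon(y,s)$ with $k=u(x,t)$, against the product test function
\begin{equation*}
\varphi(x,t,y,s)=\omega_{\rho}(x-y)\,\omega_{\rho_0}(t-s)\,\psi(t,s),
\end{equation*}
where $\omega_\rho,\omega_{\rho_0}$ are standard periodic/symmetric mollifiers and $\psi$ is a cut-off in time isolating $t\approx s\approx$ fixed. Summing the two inequalities, the transport/flux terms combine in the usual Kuznetsov way: $(\partial_t+\partial_s)\omega_{\rho_0}(t-s)=0$ and $(\partial_x+\partial_y)\omega_\rho(x-y)=0$, so these terms degenerate to the usual boundary contributions involving the initial data and to the standard moduli of continuity in time ($\propto\rho_0$) and space ($\propto\rho$, using the $BV$-estimate for $u_\epsilon$). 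The non-local terms involving $\Levy^{\ast,\mu}_r$, $\Levy^{\mu,r}$ and $\gamma^r_\mu$ are handled by exactly the same manipulations as in the proof of Theorem \ref{th:uniqueness}: using the symmetry of $\mu_s$ to match adjoint and direct non-local actions on $\varphi$, and sending $r\to 0$ to control the remaining $\mu_n$-piece. In short, every non-local contribution either cancels or produces a quantity of the usual doubling type, since both $u$ and $u_\epsilon$ solve the \emph{same} non-local equation.

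The essentially new term is the viscous one,
\begin{equation*}
\epsilon\iiiint \eta(u_\epsilon(y,s),u(x,t))\,\Delta_y\omega_\rho(x-y)\,\omega_{\rho_0}(t-s)\,\psi\,\dif x\,\dif t\,\dif y\,\dif s.
\end{equation*}
Integrating by parts once in $y$ moves one derivative onto $u_\epsilon$, producing a sign-function factor and $|\nabla_y u_\epsilon(y,s)|$, which is then estimated in $L^1$ using the uniform $BV$-bound on $u_\epsilon$ (derived in Section \ref{sec:ent}, independent of $\epsilon$). The integral of $|\nabla_y\omega_\rho|$ in $y$ is $C/\rho$, and the $t$-integration picks up a factor of $T$, so this term is bounded by $C\epsilon/\rho$. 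Collecting everything and sending auxiliary regularizations of the entropy inequality to their limits, one obtains an estimate of the Kuznetsov form
\begin{equation*}
\|u(\cdot,t)-u_\epsilon(\cdot,t)\|_{L^1(\Lambda)}\leq \|u_0-u_0\|_{L^1(\Lambda)}+C\bigl(\rho+\rho_0\bigr)+C\,\frac{\epsilon}{\rho}.
\end{equation*}
Optimizing in $\rho,\rho_0$ (choosing $\rho\sim\rho_0\sim\sqrt{\epsilon}$) yields \eqref{j4}.

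The main obstacle I expect is bookkeeping rather than conceptual: the non-local term splits into a principal-value piece $\Levy^\mu_r$ (handled by duality via $\Levy^{\ast,\mu}_r$), a far-field piece $\Levy^{\mu,r}$ (handled by a symmetry/trick involving doubling and the smoothness of $u_\epsilon$) and a drift piece $\gamma^r_\mu\cdot\partial_x\varphi$, and one must verify that each of these is well-behaved under the doubling and in the $r\to 0$ limit, \emph{uniformly} in $\epsilon$; this is precisely what the symmetric decomposition \eqref{splitt}--\eqref{rev_2} is designed to enable, and it is the same work needed for Theorem \ref{th:uniqueness}. A secondary technical point is to ensure that the $BV$-bound used for $u_\epsilon$ in the viscous-term estimate is independent of $\epsilon$, which in turn requires the standard parabolic maximum principle/contraction estimate for \eqref{prob:viscous}.
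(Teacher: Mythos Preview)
Your proposal is correct and follows essentially the same route as the paper: derive an entropy inequality for $u_\epsilon$ with the extra $\epsilon\,\eta(u_\epsilon,k)\,\Delta\varphi$ term, double variables against $u$, take the test function \eqref{test_func}, bound the viscous term via one integration by parts and the uniform $BV$-bound $|u_\epsilon(\cdot,t)|_{BV}\le|u_0|_{BV}$ (yielding $C\epsilon/\rho$), and then set $\rho=\sqrt{\epsilon}$ after sending the time-mollification parameter to zero. One small correction: the non-local terms need \emph{no} symmetry assumption or decomposition \eqref{splitt}--\eqref{rev_2}; the doubling manipulations of Theorem~\ref{th:uniqueness} work for any L\'evy measure satisfying \eqref{levy-measure}, and the symmetric/non-symmetric split is invoked only later for the $L^2$ energy estimates of Section~\ref{sec:trunc_err}.
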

The proof is given in Appendix \ref{app:2}. This result generalizes to periodic fractional conservation laws  Kuznetsov's well known result for scalar
conservation laws \cite{Kuznetsov}. As a by-product of the well-posedness of \eqref{prob:viscous} and Theorem \ref{lemma_rate}, we have the existence of
entropy solutions of \eqref{1}.

\begin{corollary}\emph{(Existence)} There exists an entropy solution of \eqref{1}.
\end{corollary}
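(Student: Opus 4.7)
The plan is to realize the entropy solution as the limit of the vanishing viscosity approximations $u_\epsilon$ from \eqref{prob:viscous}. By the standing well-posedness assumption, $u_\epsilon$ exists, is smooth, and satisfies $\|u_\epsilon\|_{L^\infty(D_T)}\le\|u_0\|_{L^\infty(\Lambda)}$ (maximum principle). The first step is to show that $\{u_\epsilon\}_{\epsilon>0}$ is Cauchy in $C([0,T];L^1(\Lambda))$ as $\epsilon\to 0^+$. Theorem \ref{lemma_rate} cannot be applied directly because it assumes the entropy solution already exists, but the Kru\v{z}kov--Kuznetsov doubling-of-variables argument behind its proof is symmetric: applying it with $u_{\epsilon_1}$ and $u_{\epsilon_2}$ in place of $(u,u_\epsilon)$ and absorbing the viscous defects from both sides yields
$$\|u_{\epsilon_1}(\cdot,t)-u_{\epsilon_2}(\cdot,t)\|_{L^1(\Lambda)}\le C\bigl(\sqrt{\epsilon_1}+\sqrt{\epsilon_2}\bigr),\qquad t\in[0,T].$$
Hence $u_\epsilon\to u$ in $C([0,T];L^1(\Lambda))$ for some $u$, which automatically inherits the $L^\infty$ bound, giving item \emph{i)} of Definition \ref{def:alibaud_periodic}. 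Item \emph{iii)} follows by combining the Cauchy estimate with $u_\epsilon(\cdot,0)=u_0$ and the time-continuity of each $u_\epsilon$.

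The bulk of the argument is verifying the entropy inequality \emph{ii)}. Since $u_\epsilon$ is smooth, Kru\v{z}kov's chain rule and the convexity inequality $\eta'(u_\epsilon,k)\,\Levy^\mu[u_\epsilon]\le\Levy^\mu[\eta(u_\epsilon,k)]$ (which comes directly from convexity of $\eta(\cdot,k)$ applied under the integral \eqref{non-loc-op}) give, after splitting $\Levy^\mu=\Levy^\mu_r+\Levy^{\mu,r}-\gamma_\mu^r\cdot\partial_x$ and moving the singular part onto $\varphi$ via its adjoint, the viscous entropy inequality
$$\iint_{D_T}\eta\,\partial_t\varphi+q\cdot\partial_x\varphi+\eta\,\Levy^{\ast,\mu}_r[\varphi]+\eta'(u_\epsilon,k)\,\Levy^{\mu,r}[u_\epsilon]\,\varphi+\eta\,\gamma_\mu^r\cdot\partial_x\varphi+\epsilon\,\eta\,\Delta\varphi\ \dif x\,\dif t\ge 0,$$
with $\eta=\eta(u_\epsilon,k)$, $q=q(u_\epsilon,k)$, for every nonnegative $\varphi\in C^\infty_p(\R^d\times(0,T))$ and every $r>0$. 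It remains to let $\epsilon\to 0$.

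Term by term: passing to an a.e.-convergent subsequence, the local flux and entropy contributions converge by the $L^\infty$ bound and dominated convergence; the viscous defect $\epsilon\iint\eta(u_\epsilon,k)\,\Delta\varphi$ tends to zero by boundedness of $\eta$; the small-jump duality term converges because $\Levy^{\ast,\mu}_r[\varphi]\in L^1(D_T)$ thanks to \eqref{levy-measure} and a Taylor bound on $\varphi$; and for the large-jump term $\Levy^{\mu,r}[u_\epsilon]\to\Levy^{\mu,r}[u]$ in $L^1(D_T)$ because the underlying measure $\mu\lfloor_{\{|z|>r\}}$ is finite.

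The genuine obstacle is the discontinuity of $\eta'(\cdot,k)$ in the product $\eta'(u_\epsilon,k)\,\Levy^{\mu,r}[u_\epsilon]\,\varphi$. I would handle it in the classical Kru\v{z}kov way: first restrict to values $k$ such that $|\{(x,t)\in D_T:u(x,t)=k\}|=0$, which holds for a.e.~$k\in\R$ by Fubini applied to $(x,t,k)\mapsto\mathbf{1}_{\{u(x,t)=k\}}$; for such $k$, $\eta'(u_\epsilon,k)\to\eta'(u,k)$ a.e.~in $D_T$ along the chosen subsequence, so bounded convergence (using $|\eta'|\le 1$ and $L^1$-convergence of $\Levy^{\mu,r}[u_\epsilon]$) yields the limit. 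The inequality for all $k\in\R$ then follows by continuity: the left-hand side of \eqref{entropy_ineq} depends continuously on $k$ by dominated convergence, and approximating any $k$ by a sequence of admissible $k_n$ closes the argument and completes the proof.
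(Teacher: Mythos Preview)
Your approach matches the paper's intent: the paper offers no proof beyond the single sentence ``As a by-product of the well-posedness of \eqref{prob:viscous} and Theorem \ref{lemma_rate}, we have the existence of entropy solutions of \eqref{1},'' and you have correctly unpacked this into (i) a Cauchy estimate for $\{u_\epsilon\}$ obtained by rerunning the Kuznetsov argument of Appendix~\ref{app:2} with two viscous solutions, and (ii) passage to the limit in the viscous entropy inequality derived there.

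One point needs tightening. Your final step asserts that the left-hand side of \eqref{entropy_ineq} is continuous in $k$, but this is not true in general because of the term $\iint\eta'(u,k)\,\Levy^{\mu,r}[u]\,\varphi$: the map $k\mapsto\mathrm{sgn}(u(x,t)-k)$ jumps on the level set $\{u=k\}$, and if that set has positive measure the integral need not be continuous at $k$. The fix is standard: since only countably many $k$ are ``bad'', approximate any such $k$ by good values from above \emph{and} from below; the one-sided limits of the $\eta'$-term differ by $\pm\iint_{\{u=k\}}\Levy^{\mu,r}[u]\,\varphi$, and averaging the two resulting inequalities yields exactly \eqref{entropy_ineq} (recall $\eta'(u,k)=0$ on $\{u=k\}$). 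Alternatively, work first with smooth convex entropies $\eta_\delta$ and send $\delta\to 0$ at the end, which avoids the issue entirely.
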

\begin{remark}
\label{remVV} Uniqueness of solutions of \eqref{prob:viscous} can be proved using an entropy formulation (see the start of Appendix \ref{App:VV}) and a
standard adaptation of the proof of Theorem \ref{th:uniqueness} incorporating ideas of Carrillo \cite{Ca99} to handle the Laplace term. Existence of an entropy
solution can be proven e.g.~by appropriately modifying our spectral approximation, compactness, and convergence analysis, see the following sections. The
solution of \eqref{prob:viscous} will also be smooth. To see this, note that  the principal
  term in \eqref{prob:viscous} is the $\epsilon\Delta$-term while the
  $\Levy^\mu$-term is of lower order, and hence regularity proofs  for
  viscous conservation laws (\eqref{prob:viscous} with $\mu\equiv0$
  and $\eps>0$)  should still work after some modifications. We refer
  to e.g.~\cite{Malek} for regularity of viscous conservation laws,
  and note that the modifications typically consist of using
  interpolation inequalities for the $\Levy^\mu$-term, see e.g.~Lemma
  2.2.1 in \cite{Garroni}.
\end{remark}

\section{The spectral vanishing viscosity method}\label{sec:approx}
We introduce a Fourier spectral method for the $d$-periodic initial
value problem \eqref{1}. The approximate solutions will be $N$-trigonometric
polynomials,
\begin{equation*}
\begin{split}
u_{N}(x,t)=\sum_{|\xi|\leq N}\hat{u}_{\xi}(t)\,e^{i\xi\cdot x},
\end{split}
\end{equation*}
which solve the semi-discrete spectral vanishing viscosity (SVV) approximation
\begin{equation}\label{2}
\begin{split}
\partial_t u_N+\partial_x\cdot P_Nf(u_N)=\Levy^\mu[u_N]+\epsilon_N\sum_{j,k=1}^d\partial_{jk}^2Q_N^{j,k}\ast u_N
\end{split}
\end{equation}
with
\begin{equation}\label{j5}
\begin{split}
u_N(x,0)=P_Nu_0(x),
\end{split}
\end{equation}
where the Fourier projection $P_N$ is defined as
$$P_N \phi(x)=\sum_{|\xi|\leq N}\hat{\phi}_{\xi}\,e^{i\xi\cdot x}\qquad\text{for}\qquad
\hat{\phi}_{\xi}=\frac{1}{(2\pi)^d}\int_{\Lambda} \phi(x)\,e^{-i\xi\cdot x}\ \dif x.$$
The (spectral) vanishing viscosity term has the following three ingredients:
\medskip
\begin{itemize}
\item[(\textbf{A}.4)] a vanishing viscosity amplitude $\epsilon_N\sim N^{-\theta}$ with $0<\theta<1$;
\medskip
\item[(\textbf{A}.5)] a viscosity-free spectrum $m_N\sim N^{\frac{\theta}{2}}(\log N)^{-\frac{d}{2}}$;
\medskip
\item[(\textbf{A}.6)] a family of viscosity kernels
\begin{align*}
Q_N^{j,k}(x,t)=\sum_{p= m_N}^N\hat{Q}_p^{j,k}(t)\sum_{|\xi|=p}e^{i\xi\cdot x}
\end{align*}
satisfying
\smallskip
\begin{itemize}
\item $\hat{Q}_p^{j,k}$ is monotonically $p$-increasing,
\smallskip
\item $\hat{Q}_p^{j,k}$ spherically symmetric,
  $\hat{Q}_\xi^{j,k}=\hat{Q}_p^{j,k}$ for all $|\xi|=p$,
\smallskip
\item $|\hat{Q}_p^{j,k}-\delta_{jk}|\leq C\,m_N^2\,p^{-2}$ for all
  $p\geq m_N$.
\end{itemize}
\end{itemize}
\medskip
\noindent Such kernels can be conveniently implemented in Fourier space,
\begin{equation*}
\begin{split}
\sum_{j,k=1}^d\partial_{jk}^2Q_N^{j,k}\ast u_N=-\sum_{|\xi|=m_N}^N\left(\sum_{j,k=1}^d\hat{Q}_\xi^{j,k}(t)\,\xi_j\,\xi_k\right)\hat u_\xi(t)\,e^{i\xi\cdot x}.
\end{split}
\end{equation*}
Combined with one's favorite ODE solver (e.g.~Euler, Runge-Kutta, etc.), \eqref{2} and \eqref{j5} give a fully discrete numerical
 approximation method for \eqref{1}.

With left-hand sides set to zero ($\mu\equiv0$ and $\eps_N=0$), \eqref{2} becomes the standard Fourier approximation of \eqref{1}. It is well known that this
approximation is spectrally accurate but, as opposed to the equation, it lacks entropy dissipation. The approximation supports spurious Gibbs oscillations
which prevent strong convergence toward solutions containing shock discontinuities. If the $\Levy^\mu$-term is present in the equations, shock solutions are
still possible in some situations \cite{Alibaud/Droniou/Vovelle}, and the problem of the Gibbs oscillations remains.  In order to suppress such oscillations
without sacrificing the overall spectral accuracy of the method, we have followed Tadmor \cite{Tadmor} and added a vanishing spectral viscosity term to the
scheme, $\epsilon_N\sum_{j,k=1}^d\partial_{jk}^2Q_N^{j,k}\ast u_N$.

An important feature of Fourier method \eqref{2} is that it {\em diagonalizes}, and hence {\em localizes}, the non-local operator $\Levy^\mu[\cdot]$! This
leads to dramatically reduced computational cost for this term. Indeed,
\begin{equation}\label{u1}
\begin{split}
\Levy^\mu[u_N]=\sum_{|\xi|\leq N}G^\mu(\xi)\,\hat{u}_\xi(t)\,e^{i\xi\cdot x},
\end{split}
\end{equation}
where
\begin{equation}\label{weights}
\begin{split}
G^\mu(\xi)=\int_{|z|>0}e^{i\xi\cdot z}-1-i\xi\cdot z\,\mathbf{1}_{|z|<1}\ \dif\mu(z).
\end{split}
\end{equation}
Furthermore, when the measure $\mu$ is {\em symmetric},
\begin{align}\label{symm}
\mu(-B)=\mu(B)\quad\text{for all
Borel sets} \quad B\in\R^d\setminus\{0\},
\end{align}
the weights \eqref{weights} are all {\em real and non-positive}. This
follows since the imaginary part of the integrand is odd and the real
part is even and non-positive ($e^{i\xi\cdot z}=\cos(\xi\cdot
z)+i\,\sin(\xi\cdot z)$). Finally, we stress that the approximation of the
non-local operator \eqref{non-loc-op} is {\em spectrally
accurate} since, by Taylor's formula,
\begin{equation*}
\begin{split}
&\|\Levy^\mu[u_N(\cdot,t)]-\Levy^\mu[u(\cdot,t)]\|_{L^2(\Lambda)}\\
&\qquad\qquad\leq C\,\bigg(\sup_{j,k}\|\del_j\del_k(u_N-u)(\cdot,t)\|_{L^2(\Lambda)}+\|(u_N-u)(\cdot,t)\|_{L^2(\Lambda)}\bigg).
\end{split}
\end{equation*}

Now we define
\begin{equation*}
\hat R_\xi^{j,k}(t)=
\begin{cases}
\delta_{jk} &|\xi|\leq m_N,\\
\delta_{jk}-\hat Q_\xi^{j,k}(t) &|\xi|>m_N,
\end{cases}
\qquad
R_N^{j,k}(x,t)=\sum_{|\xi|\leq N}\hat R_\xi^{j,k}(t)\,e^{i\xi\cdot x},
\end{equation*}
and note that
\begin{equation}\label{full_lap}
\begin{split}
\Delta u_N(\cdot,t)=\sum_{j,k=1}^d\del_{j}\del_kQ_N^{j,k}(\cdot,t)\ast u_N(\cdot,t)+\sum_{j,k=1}^d\del_j\del_kR_N^{j,k}(\cdot,t)\ast u_N(\cdot,t).
\end{split}
\end{equation}
To conclude this section, we recall that by Lemma 3.1 and Corollary 3.2 of \cite{Tadmor_MAIN}, the spectral vanishing viscosity term is  an $L^p$-bounded
perturbation of the standard vanishing viscosity $\epsilon_N\Delta
u_N$:
\begin{lemma}\label{lem:tad}
For $0\leq r\leq s\leq 2$,
\begin{equation}\label{hh1}
\begin{split}
\left\|\sum_{j,k=1}^d\partial_{j}^r\partial_k^{s-r} R_N^{j,k}(\cdot,t)\right\|_{L^1(\Lambda)}\leq C\,m_N^s\,(\log N)^d.
\end{split}
\end{equation}
Moreover, if $c_N\leq C\epsilon_N\,m_N^2\,(\log N)^d\leq \hat C$, then for all
$p\geq 1$, $\varphi\in L^p(\Lambda)$,
\begin{equation}\label{hh2}
\begin{split}
\epsilon_N\left\|\sum_{j,k=1}^d\del_j\del_k R_N^{j,k}(\cdot,t)\ast \varphi(\cdot)\right\|_{L^p(\Lambda)}\leq c_N\,\|\varphi\|_{L^p(\Lambda)}.
\end{split}
\end{equation}

\end{lemma}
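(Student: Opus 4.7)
The plan is to reduce (hh2) to (hh1) via Young's convolution inequality and to focus the work on (hh1). Specializing (hh1) to $s=2$, $r=1$ and invoking the periodic Young inequality $\|f\ast g\|_{L^p}\le\|f\|_{L^1}\|g\|_{L^p}$ gives
\[
\eps_N\left\|\sum_{j,k=1}^d\del_j\del_k R_N^{j,k}(\cdot,t)\ast\varphi\right\|_{L^p(\Lambda)}\le C\,\eps_N\,m_N^2(\log N)^d\,\|\varphi\|_{L^p(\Lambda)},
\]
and (hh2) follows from the hypothesis relating $c_N$ to $C\eps_N m_N^2(\log N)^d$.

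For (hh1) I would start by computing the Fourier coefficients of
\[
K(x,t):=\sum_{j,k=1}^d\del_j^r\del_k^{s-r} R_N^{j,k}(x,t)=\sum_{|\xi|\le N}\hat K_\xi(t)\,e^{i\xi\cdot x},\quad \hat K_\xi(t)=i^s\sum_{j,k=1}^d\xi_j^r\xi_k^{s-r}\hat R_\xi^{j,k}(t).
\]
Separating the regions $|\xi|\le m_N$ (where $\hat R_\xi^{j,k}=\delta_{jk}$) and $m_N<|\xi|\le N$ (where the key bound $|\hat Q_\xi^{j,k}-\delta_{jk}|\le Cm_N^2|\xi|^{-2}$ of (\textbf{A}.6) applies), I would obtain
\[
|\hat K_\xi(t)|\le
\begin{cases}
d\,|\xi|^s,&|\xi|\le m_N,\\
C\,m_N^2\,|\xi|^{s-2},&m_N<|\xi|\le N,
\end{cases}
\]
both of which are $\le C m_N^s$ under the assumption $0\le s\le 2$.

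I would then decompose $K=K_{\text{low}}+K_{\text{high}}$ along the same threshold and estimate each piece via the $d$-dimensional Dirichlet kernel $D_p(x)=\sum_{|\xi|\le p}e^{i\xi\cdot x}$, which satisfies the classical bound $\|\del_x^s D_p\|_{L^1(\Lambda)}\le C p^s(\log p)^d$. The low piece is, up to a sign, $\sum_j\del_j^s D_{m_N}$, which gives $\|K_{\text{low}}\|_{L^1}\le C m_N^s(\log N)^d$ directly. For the high piece, the spherical symmetry of $\hat Q_\xi^{j,k}$ allows the $\xi$-sum to be reorganized into a radial sum in $p=|\xi|$ against shell kernels $S_p=D_p-D_{p-1}$. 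Abel summation in $p$ then rewrites $K_{\text{high}}$ as a boundary term plus a telescoping sum involving the differences $\hat R_p^{j,k}-\hat R_{p+1}^{j,k}$, and combining $|\hat R_p^{j,k}|\le Cm_N^2p^{-2}$, the monotonicity of $\hat Q_p^{j,k}$ in $p$ (which bounds the total variation of $\hat R_p^{j,k}$), and the $L^1$ bounds on the derivatives of the Dirichlet kernel yields $\|K_{\text{high}}\|_{L^1}\le C m_N^s(\log N)^d$.

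The main obstacle is the $d$-dimensional Abel summation step with the anisotropic angular factor $\xi_j^r\xi_k^{s-r}$. Without the spherical-symmetry hypothesis the multiplier would retain genuine angular dependence and could not be reduced to a one-parameter radial sum; and without monotonicity the total variation of $\hat R_p^{j,k}$ in $p$ need not be comparable to its sup-norm. Thus the two structural conditions in (\textbf{A}.6) are tailored precisely for this Abel-summation step, after which the classical $d$-dimensional Dirichlet $L^1$ bound $(\log N)^d$ closes the estimate.
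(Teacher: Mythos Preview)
The paper does not prove this lemma at all: it is quoted directly from Lemma~3.1 and Corollary~3.2 of Chen--Du--Tadmor \cite{Tadmor_MAIN}, as announced in the sentence immediately preceding the statement. So your proposal is not competing with a proof in the present paper but is a reconstruction of the argument in the cited reference.

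Your sketch follows the strategy of \cite{Tadmor_MAIN} closely: derive \eqref{hh2} from \eqref{hh1} by the periodic Young inequality; split the multiplier at $|\xi|=m_N$, use $\hat R_\xi^{j,k}=\delta_{jk}$ below and the decay $|\hat R_\xi^{j,k}|\le Cm_N^2|\xi|^{-2}$ above; and close with Abel summation in the radial index against Dirichlet-type kernels, exploiting the monotonicity and radial symmetry in (\textbf{A}.6) exactly as you say. One point deserves care when you fill in the details: the factor $(\log N)^d$ is the $L^1$ norm of a \emph{product} (cube) Dirichlet kernel, one logarithm per coordinate; the genuinely spherical kernel $\sum_{|\xi|\le p}e^{i\xi\cdot x}$ in Euclidean norm has $L^1$ norm of order $p^{(d-1)/2}$, which would be far too large. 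In \cite{Tadmor_MAIN} the multiplier structure is arranged so that the relevant kernels factor coordinatewise after the Abel step, and your outline is consistent with that once this detail is handled. Otherwise the proposal is sound.
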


\section{Spectrally small truncation error for symmetric $\mu$}\label{sec:trunc_err}
In this section we assume that the measure $\mu$ is symmetric, cf.~\eqref{symm}. In the SVV approximation \eqref{2}, the convection term $\partial_x\cdot f(u)$
is replaced by $\partial_x\cdot P_Nf(u_N)$ which leads to the (truncation) term error
\begin{equation*}
\begin{split}
\partial_x\cdot(I-P_N)f(u_N).
\end{split}
\end{equation*}
We will now show that this error is spectrally small due to the
presence of the spectral vanishing viscosity term.

Let us start by noting that a straightforward estimate leads to
\begin{equation*}
\begin{split}
\|\partial_x^\alp(I-P_N)f(u_N)\|_{L^2(\Lambda)}=\bigg(\sum_{j=1}^d\sum_{|\xi|>N}|\xi^\alp|^2|\widehat{f_j(u_N)}(\xi)|^2\bigg)^{\frac12}\leq\frac{\|\partial_x^{\alp+\beta}
  f(u_N)\|_{L^2(\Lambda)}}{N^{|\beta|}}
\end{split}
\end{equation*}
for all multi-indices $\alp,\beta$. Note that there is no divergence in this estimate, so $\del_x^\alp  f$ is a vector. By Theorem 7.1 in \cite{Tadmor_MAIN},
there is a constant $\mathcal{K}_s$  such that
\begin{equation}\label{T_2.2}
\begin{split}
\|\partial_x^s f(u_N)\|_{L^2(\Lambda)}\leq\mathcal{K}_s\|\partial_x^su_N\|_{L^2(\Lambda)}
\quad\text{for}\quad
\mathcal{K}_s\leq C\sum_{k=1}^s|f|_{C^k}\|u_N\|_{L^\infty(\Lambda)}^{k-1}
\end{split}
\end{equation}
and $s=1,2,\dots$, where $|f|_{C^k}=\|\del_x^k f(\cdot)\|_{L^\infty(\Omega_N)}$ and
$\Omega_N=\{u:|u|\leq\|u_N\|_{L^\infty(\Lambda)}\}$. This inequality is
 a type of Gagliardo-Nirenberg-Moser estimate, and similar
results can be found in page 22 in Taylor \cite{Taylor}. By these two inequalities we can conclude that, for all $0\leq r\leq s$,
\begin{equation}\label{T_2.3}
\begin{split}
\|\partial_x^r(I-P_N)f(u_N)\|_{L^2(\Lambda)}\leq\frac{\mathcal{K}_s}{N^{s-r}}\,\|\partial_x^su_N\|_{L^2(\Lambda)}.
\end{split}
\end{equation}

Inequality \eqref{T_2.3} states that the $r$-derivative of the truncation error decays as rapidly as the $s$-smoothness of $u_N$ permits. Of course the
$s$-derivatives of an arbitrary $N$-trigonometric polynomial $u_N$ may grow as fast as $N^s$, in which case nothing is gained from \eqref{T_2.3}. However, if
$u_N$ is solves our VVS approximation \eqref{2},  we can have the better bound  $ \epsilon_N^{-s}$ in $L^2$. This will be a consequence of the following energy
estimate:
\begin{theorem}\label{Th:2.1}
Consider the SVV approximation \eqref{2} with  $\epsilon_N$
and $m_N$ such that
\begin{equation}\label{ass}\tag{\textbf{A}.7}
\left\{
\begin{split}
&\epsilon_N>\frac{8\,d^{\frac s2}\mathcal{K}_{s+1}}{N},\\
&\epsilon_N\, m_N^2\,(\log N)^d\leq C.
\end{split}
\right.
\end{equation}
Then there is a constant $\mathcal{B}_s$ (proportional to $\Pi_{k=1}^s\mathcal{K}_s$ for $s\geq 1$ and to $\|u_N\|_{L^\infty}$ for
$s=0$) such that
\begin{equation}\label{main_est}
\begin{split}
&\epsilon^s_N\|\partial_x^su_N(\cdot,t)\|_{L^2(\Lambda)}+\epsilon^s_N\left(-\sum_{|\alp|=s}\sum_{|\xi|\leq N}G^\mu(\xi)|\xi^\alp|^{2}\int_0^t|\hat
u_\xi(\tau)|^{2}\, \dif \tau\right)^{\frac{1}{2}}\\
&\qquad\qquad\qquad+\epsilon_N^{s+\frac{1}{2}}\|\partial_x^{s+1}u_N\|_{L^2(D_T)}\leq \mathcal{B}_s+ 3\epsilon_N^s\|\partial_x^su_N(\cdot,0)\|_{L^2(\Lambda)}.
\end{split}
\end{equation}
\end{theorem}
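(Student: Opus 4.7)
The plan is a Sobolev-type energy estimate for $\|\partial_x^s u_N\|_{L^2}$ closed by induction on $s$. I apply $\partial_x^{\alpha}$ (for $|\alpha|=s$) to the SVV equation \eqref{2}, multiply by $\partial_x^{\alpha}u_N$, integrate over $\Lambda$, and sum over $|\alpha|=s$, obtaining an identity of the form $\tfrac{1}{2}\partial_t\|\partial_x^{s}u_N\|_{L^2}^{2}=I_{\mathrm{conv}}+I_{\mathrm{L}}+I_{\mathrm{visc}}$. The L\'evy term is handled by Parseval: the non-positivity of $G^\mu(\xi)$ guaranteed by symmetry of $\mu$ (see the discussion after \eqref{symm}) yields $I_{\mathrm{L}}=(2\pi)^d\sum_{|\alpha|=s}\sum_{|\xi|\leq N}G^\mu(\xi)|\xi^{\alpha}|^{2}|\hat u_\xi|^{2}\leq 0$, and its time integral is exactly the middle term of \eqref{main_est}. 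For the viscosity term I use \eqref{full_lap} to peel off a genuine Dirichlet contribution $-\epsilon_N\sum_{|\alpha|=s}\|\nabla\partial_x^{\alpha}u_N\|_{L^2}^2$, whose sum dominates $-\epsilon_N\|\partial_x^{s+1}u_N\|_{L^2}^{2}$, plus an $R_N$-remainder that Lemma~\ref{lem:tad} bounds by $c_N\|\partial_x^{s}u_N\|_{L^2}^{2}$ with $c_N\leq\hat C$ by the second line of \eqref{ass}.

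For the convection term I exploit that $\partial_x^{\alpha}u_N$ lies in $P_N L^{2}$, so $\partial_x^{\alpha}(I-P_N)f(u_N)$ is orthogonal and the $P_N$ can effectively be dropped; one integration by parts, Cauchy--Schwarz in $\alpha$, and the Moser-type inequality \eqref{T_2.2} then yield $|I_{\mathrm{conv}}|\leq C\,d^{s/2}\mathcal{K}_{s+1}\,\|\partial_x^{s+1}u_N\|_{L^2}\,\|\partial_x^{s}u_N\|_{L^2}$, and in the base case $s=0$ this integral in fact vanishes directly by periodicity ($\int u_N\,\partial_x\!\cdot\!f(u_N)\,dx=0$). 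A Young inequality absorbs half of this into the viscosity dissipation, leaving
\[
\frac{d}{dt}\|\partial_x^{s}u_N\|_{L^2}^2 + |I_{\mathrm{L}}| + \epsilon_N\|\partial_x^{s+1}u_N\|_{L^2}^2 \leq \Bigl(\frac{C\,d^{s}\mathcal{K}_{s+1}^{2}}{\epsilon_N}+2c_N\Bigr)\|\partial_x^{s}u_N\|_{L^2}^2.
\]

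I then integrate in $t$ and multiply by $\epsilon_N^{2s}$, rewriting the offending right-hand term as $C\,d^{s}\mathcal{K}_{s+1}^{2}\,\epsilon_N^{2s-1}\int_{0}^{T}\|\partial_x^{s}u_N\|_{L^2}^{2}\,dt$. For $s=0$ everything closes immediately from $\|u_N\|_{L^2}^{2}\leq|\Lambda|\|u_N\|_{L^\infty}^{2}$ and $c_N\leq\hat C$, giving $\mathcal{B}_0\propto\|u_N\|_{L^\infty}$. For $s\geq 1$ the induction hypothesis at level $s-1$ supplies $\epsilon_N^{s-1/2}\|\partial_x^{s}u_N\|_{L^2(D_T)}\leq M_{s-1}:=\mathcal{B}_{s-1}+3\epsilon_N^{s-1}\|\partial_x^{s-1}u_N(\cdot,0)\|_{L^2}$, which is exactly the factor that cancels the $1/\epsilon_N$ and produces an $N$-independent bound $C\,d^{s}\mathcal{K}_{s+1}^{2}M_{s-1}^{2}$. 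Taking square roots and iterating the recursion $\mathcal{B}_s\sim\sqrt{d}\,\mathcal{K}_{s+1}\mathcal{B}_{s-1}$ yields \eqref{main_est} with the advertised $\prod_{k}\mathcal{K}_k$-type dependence in $\mathcal{B}_s$.

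The main obstacle is precisely this closure: at every level Young's inequality produces a coefficient of order $\mathcal{K}_{s+1}^{2}/\epsilon_N$, which a naive Gronwall argument would exponentiate into a disastrous $e^{C/\epsilon_N}$. The remedy is to feed in the previous level of the estimate, which provides the precise factor of $\epsilon_N$ that turns $\int_{0}^{T}\|\partial_x^{s}u_N\|_{L^2}^{2}\,dt$ into the weighted $\epsilon_N^{s-1/2}$-norm appearing there; this is exactly the mechanism that forces the $\epsilon_N^{s}$-weight on the left-hand side of \eqref{main_est}. The first line of \eqref{ass} is what keeps the Young constants compatible with this induction uniformly in $N$.
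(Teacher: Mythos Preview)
Your argument is correct and follows the same overall architecture as the paper: test the equation against $\partial_x^{2\alpha}u_N$, use the sign of $G^\mu$ for the L\'evy term, split the viscosity via \eqref{full_lap} and Lemma~\ref{lem:tad}, and close the $\mathcal{K}_s^2/\epsilon_N$ coefficient on the right by feeding in the $L^2(D_T)$ bound from level $s-1$ instead of using Gronwall. That inductive closure is exactly the mechanism the paper uses in \eqref{pp3}.

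Where you differ from the paper is in the treatment of the convection term, and your route is actually cleaner. You observe that $\partial_x^\alpha u_N$ has Fourier support in $|\xi|\le N$ while $\partial_x^\alpha\partial_x\cdot(I-P_N)f(u_N)$ has support in $|\xi|>N$, so the pairing vanishes and $P_N$ can be dropped for free; one integration by parts and \eqref{T_2.2} then give the bound $C\,\mathcal{K}_s\,\|\partial_x^{s+1}u_N\|\,\|\partial_x^s u_N\|$ directly. The paper instead keeps $P_Nf(u_N)$, writes $\|\partial_x^s P_Nf(u_N)\|\le \mathcal{K}_s\|\partial_x^s u_N\|+\frac{\mathcal{K}_{s+1}}{N}\|\partial_x^{s+1}u_N\|$ via \eqref{T_2.2}--\eqref{T_2.3}, and the extra $\frac{\mathcal{K}_{s+1}}{N}\|\partial_x^{s+1}u_N\|$ term is precisely what forces the first line of \eqref{ass}: it must be absorbed into the $\epsilon_N$-dissipation, which requires $\epsilon_N\gtrsim d^{s/2}\mathcal{K}_{s+1}/N$. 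In your version this term never appears, so your closing sentence is slightly off: in your own argument only the \emph{second} line of \eqref{ass} (bounding $c_N$) is genuinely used, and the first line is superfluous. Two minor cosmetic points: after your integration by parts only $s$ derivatives land on $f$, so the natural constant is $\mathcal{K}_s$ rather than $\mathcal{K}_{s+1}$ (your bound is still valid since $\mathcal{K}_s\le\mathcal{K}_{s+1}$), and this gives the recursion $\mathcal{B}_s\sim\mathcal{K}_s\mathcal{B}_{s-1}$ matching the stated $\prod_k\mathcal{K}_k$ dependence.
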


Remember that in this section $\mu$ is symmetric and hence $G^\mu$ is
real and non-positive. Now if
\medskip
\begin{itemize}
\item[(\textbf{A}.8)] $\quad|f|_{C^s}<\infty$ for sufficiently large $s$,
  cf.~\eqref{smooth_req} below, and
\bigskip
\item[(\textbf{A}.9)]  $\quad u_0$ is such that
  $\epsilon_N^s\,\|\partial_x^su_N(\cdot,0)\|_{L^2(\Lambda)}\leq C$,
\end{itemize}
\medskip
then Theorem \ref{Th:2.1} implies that
\begin{equation*}
\begin{split}
\|\partial_x^su_N(\cdot,t)\|_{L^2(\Lambda)}\leq C\,
\epsilon_N^{-s}\quad\text{and}\quad \|\partial_x^{s+1}u_N\|_{L^2(D_T)}&\leq C\, \epsilon_N^{-(s+\frac{1}{2})}.
\end{split}
\end{equation*}
Taking into account \eqref{T_2.3}, we then find that
\begin{align}
\|\partial_x^r(I-P_N)f(u_N(\cdot,t))\|_{L^2(\Lambda)}&\leq
C\,\mathcal{B}_s\,N^{-s_r},\quad s_r=s(1-\theta)-r,\label{k2}\\
\|\partial_x^r(I-P_N)f(u_N)\|_{L^2(D_T)}&\leq C\,\mathcal{B}_s\,N^{-(s_r+\frac{\theta}{2})},\quad \forall\, s\geq1.\label{k1}
\end{align}

We can now turn these inequalities into spectral decay estimates in the uniform norm using the Sobolev inequality (cf.~Theorem 6, Chapter 5, in \cite{Evans})
\begin{equation*}
\begin{split}
\|\partial_x^r\varphi\|_{L^\infty}\leq C\,\|\partial_x^{r+[\frac{d}{2}]+1}\varphi\|_{L^2}.
\end{split}
\end{equation*}
For example, inequality \eqref{k1} becomes
\begin{equation}\label{k3}
\begin{split}
\|\partial_x^r(I-P_N)f(u_N)\|_{L^\infty(D_T)}&\leq C\,\mathcal{B}_s\,N^{-s_r+[\frac{d}{2}]+1-\frac{\theta}{2}}\leq C\,\mathcal{B}_s\,N^{-s_r+[\frac{d}{2}]+1}.
\end{split}
\end{equation}
Note that the polynomial decay rate in \eqref{k3} can be made as large
as the $C^s$-smoothness of $f(\cdot)$ permits. Taking $r=2$, we can
find the following result.
\begin{theorem}
\label{thm2}
If $f\in C^s$ with
\begin{equation}\label{smooth_req}
\begin{split}
s\geq\frac{4+[\frac{d}{2}]}{1-\theta},
\end{split}
\end{equation}
then
\begin{equation}\label{lo}
\begin{split}
\|\partial_x(I-P_N)f(u_N)\|_{L^\infty(D_T)}+\|\partial_x^2(I-P_N)f(u_N)\|_{L^\infty(D_T)}\leq
\frac{C\,\mathcal{B}_s}{N}.
\end{split}
\end{equation}
\end{theorem}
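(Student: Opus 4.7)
The plan is a direct bookkeeping exercise on the exponents in the spectral decay estimate \eqref{k3}, which has already been set up in the preceding discussion using the energy estimate from Theorem \ref{Th:2.1} together with the Sobolev embedding. All the substantive work sits in Theorem \ref{Th:2.1} and in the chain \eqref{T_2.3}--\eqref{k3}; the present theorem is essentially a corollary obtained by fixing $r\in\{1,2\}$ and verifying that the smoothness hypothesis \eqref{smooth_req} gives the advertised $N^{-1}$ decay.

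First, I would recall the general form of \eqref{k3}: for every $r$,
\begin{equation*}
\|\partial_x^r(I-P_N)f(u_N)\|_{L^\infty(D_T)}\leq C\,\mathcal{B}_s\,N^{-s_r+[\frac{d}{2}]+1},\qquad s_r=s(1-\theta)-r,
\end{equation*}
which was derived combining \eqref{k1} with the Sobolev embedding $\|\partial_x^r\varphi\|_{L^\infty}\leq C\|\partial_x^{r+[d/2]+1}\varphi\|_{L^2}$. This of course presupposes $s\geq 1$ and the structural assumptions (\textbf{A}.7)--(\textbf{A}.9) from Section \ref{sec:trunc_err} so that Theorem \ref{Th:2.1} applies.

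Second, I would specialize to the more demanding case $r=2$. The exponent of $N$ becomes
\begin{equation*}
-s_2+\Big[\tfrac{d}{2}\Big]+1=-s(1-\theta)+\Big[\tfrac{d}{2}\Big]+3,
\end{equation*}
and for this to be $\leq -1$ one needs $s(1-\theta)\geq[d/2]+4$, which is exactly the smoothness requirement \eqref{smooth_req}. The case $r=1$ gives the weaker threshold $s(1-\theta)\geq[d/2]+3$, so it is automatically satisfied. Adding the two resulting bounds produces \eqref{lo}.

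I do not anticipate a genuine obstacle: the only content is checking that the number $4+[d/2]$ in the hypothesis is the sharp value making the exponent in \eqref{k3} equal $-1$ for $r=2$, and noting that $r=1$ is subsumed. The one point worth flagging in writing is that the constant $\mathcal{B}_s$ depends on $s$ through the products of $\mathcal{K}_k$'s from Theorem \ref{Th:2.1}, so the bound \eqref{lo} is $N^{-1}$ only after fixing the smoothness index $s$ at (or above) the value dictated by \eqref{smooth_req}; the constant $C\mathcal{B}_s$ is therefore implicitly $s$-dependent.
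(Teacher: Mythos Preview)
Your proposal is correct and follows exactly the approach of the paper: the paper's entire proof is the sentence ``Taking $r=2$, we can find the following result'' preceding the statement, so the theorem is indeed just the specialization of \eqref{k3} to $r\in\{1,2\}$ together with the exponent check you carry out. Your additional remarks (that $r=1$ is subsumed by $r=2$, and that $\mathcal{B}_s$ is $s$-dependent) are accurate and more explicit than what the paper records.
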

The smoothness requirement \eqref{smooth_req} will be sufficient for
all the estimates derived throughout the  paper.

\begin{proof}[Proof of Theorem \ref{Th:2.1}]
For sake of brevity, we will write $\|\cdot\|$ instead of
$\|\cdot\|_{L^2(\Lambda)}$.
With \eqref{full_lap} in mind, we rewrite the SVV approximation
\eqref{2} in the two equivalent forms
\begin{align}\label{f1}
\begin{split}
\partial_t u_N+\partial_x\cdot P_Nf(u_N)-\Levy^\mu[u_N]-\epsilon_N\Delta u_N=-\epsilon_N\sum_{j,k=1}^d\del_j\del_kR^{j,k}_N\ast u_N,
\end{split}\\
\label{T_7.1}
\begin{split}
&\partial_t u_N+\partial_x\cdot f(u_N)-\Levy^\mu[u_N]-\epsilon_N\Delta u_N\\
&\qquad\qquad\qquad\qquad=-\epsilon_N\sum_{j,k=1}^d\del_j\del_kR^{j,k}_N\ast u_N+\partial_x\cdot(I-P_N)f(u_N).
\end{split}
\end{align}
Since $G^\mu(\xi)\leq0$ ($\mu$ is symmetric) and $u_N(x)$ and
$\Levy^\mu[u_N]$ are real,
$$\int_\Lambda \Levy^\mu[u_N]u_N\ \dif x=\sum_{|\xi|\leq N}G^\mu(\xi)|\hat u_\xi(t)|^2\leq 0,$$
and hence spatial integration of \eqref{T_7.1} against $u_N$ yields
\begin{equation*}
\begin{split}
&\frac{1}{2}\frac{\dif}{\dif t}\|u_N\|^2-\sum_{|\xi|\leq N}G^\mu(\xi)|\hat u_\xi(t)|^2+\epsilon_N\|\partial_xu_N\|^2\\
&\leq\epsilon_N\|u_N\|\bigg\|\sum_{j,k=1}^d\del_j\del_kR_N^{j,k}\ast u_N\bigg\|+\sum_{j=1}^d\|\partial_ju_N\|\|(I-P_N)f_j(u_N)\|.
\end{split}
\end{equation*}
Using \eqref{hh2} with $p=2$ for the first term on the right and \eqref{T_2.3} with $(r,s)=(0,1)$ for the second term, we find that
\begin{equation*}
\begin{split}
\frac{1}{2}\frac{\dif}{\dif t}\|u_N\|^2-\sum_{|\xi|\leq N}G^\mu(\xi)|\hat u_\xi(t)|^2+\left(\epsilon_N-\frac{\mathcal{K}_1}{N}\right)\|\partial_xu_N\|^2 \leq
c_N\|u_N\|^2
\end{split}
\end{equation*}
with $c_N\leq C\epsilon_N\,m_N^2\,(\log N)^d\leq \hat C$. Hence
\eqref{main_est} follows for $s=0$ since by \eqref{ass},
\begin{equation*}
\begin{split}
\left(\epsilon_N-\frac{\mathcal{K}_1}{N}\right)>\frac{\epsilon_N}{2},
\end{split}
\end{equation*}
and $c_N\|u_N\|^2\leq C\,\|u_N\|_{L^\infty(\Lambda)}^2=\mathcal{B}_0^2$.

The general case follows by induction on $s$. Spatial integration of \eqref{f1} against $\partial_x^{2\alp}u_N$ for some multi-index $\alp$ yields
\begin{equation}\label{f2}
\begin{split}&\frac{1}{2}\frac{\dif}{\dif t}\|\partial_x^\alp u_N\|^2-\sum_{|\xi|\leq N}G^\mu(\xi)|\xi^\alp|^{2}|\hat u_\xi(t)|^2+\epsilon_N\|\del^\alp_x\partial_xu_N\|^2\\
&\leq\epsilon_N\|\partial_x^{\alp}u_N\|\bigg\|\sum_{j,k=1}^d\del_j\del_kR_N^{j,k}\ast
\partial_x^{\alp}u_N\bigg\|+\|\partial_x^{\alp}\del_xu_N\|\|\partial_x^{|\alp|-1}\partial_x
\cdot P_Nf(u_N)\|.
\end{split}
\end{equation}
After having used \eqref{hh2} and Young's inequality to bound the first
and second term on the right hand side, we find that
\begin{equation*}
\begin{split}
&\frac12\frac{\dif}{\dif t}\|\partial_x^\alp u_N\|^2-\sum_{|\xi|\leq N}G^\mu(\xi)|\xi^\alp|^{2}|\hat u_\xi(t)|^2+\frac{\epsilon_N}{2}\|\del^\alp_x\partial_xu_N\|^2\\
&\qquad\qquad\qquad\qquad\qquad\leq C\,\|\partial_x^{\alp}u_N\|^2+\frac{1}{2\epsilon_N}\|\partial_x^{|\alp|} P_Nf(u_N)\|^2.
\end{split}
\end{equation*}
Now we sum over all $|\alp|= s$ to find that
\begin{equation}\label{f4}
\begin{split}
&\frac12\frac{\dif}{\dif t}\|\partial_x^s u_N\|^2-\sum_{|\alp|= s}\sum_{|\xi|\leq N}G^\mu(\xi)|\xi^\alp|^{2}|\hat u_\xi(t)|^2+\frac{\epsilon_N}{2}\|\del^{s+1}_xu_N\|^2\\
&\qquad\qquad\qquad\qquad\qquad\leq C\,\|\partial_x^{s}u_N\|^2+\frac{d^s}{2\epsilon_N}\|\partial_x^{s} P_Nf(u_N)\|^2.
\end{split}
\end{equation}
By \eqref{T_2.2} and \eqref{T_2.3},
\begin{equation*}
\begin{split}
\|\partial_x^{s} P_Nf(u_N)\|&\leq \|\partial_x^{s} f(u_N)\|+\|\partial_x^{s}(I-P_N) f(u_N)\|\\
&\leq \mathcal{K}_{s}\,\|\partial_x^{s}u_N\|+\frac{\mathcal{K}_{s+1}}{N}\,\|\partial_x^{s+1}u_N\|,
\end{split}
\end{equation*}
and hence by inequality \eqref{f4} we see that
\begin{equation}\label{pp2}
\begin{split}
\frac12\frac{\dif}{\dif t}\|\partial_x^s u_N\|^2-\sum_{|\alp|= s}\sum_{|\xi|\leq N}G^\mu(\xi)|\xi^\alp|^{2}|\hat u_\xi(t)|^2+\left(\frac{\epsilon_N}{2}
-\frac{d^s\mathcal{K}^2_{s+1}}{N^2\epsilon_N}\right)\|\del_x^{s+1}u_N\|^2\\
\qquad\qquad\qquad\qquad\qquad \leq
\left(C+\frac{d^s\mathcal{K}_s^2}{\epsilon_N}\right)\,\|\partial_x^{s}u_N\|^2\leq\frac{2\,d^s\mathcal{K}_s^2}{\epsilon_N}\,\|\partial_x^{s}u_N\|^2,
\end{split}
\end{equation}
where the last inequality holds for $N$ big enough. By \eqref{ass} and
integration in time,  we then find that
\begin{equation}\label{pp3}
\begin{split}
\frac12\|\partial_x^su_N(\cdot,t)\|^2-\sum_{|\alp|= s}\sum_{|\xi|\leq N}G^\mu(\xi)|\xi^\alp|^{2}\int_0^t|\hat u_\xi(\tau)|^2\, \dif \tau +\frac{\epsilon_N}{4}
\|\partial_x^{s+1}u_N\|^2_{L^2(D_T)}\\
\leq \frac{2d^s\mathcal{K}_s^2}{\epsilon_N}\,\|\partial_x^{s}u_N\|^2_{L^2(D_T)}+\frac12\|\partial_x^su_N(\cdot,0)\|^2.
\end{split}
\end{equation}
At this point \eqref{main_est} follows by the induction assumption on $s$ since
\begin{equation*}
\begin{split}
\|\partial_x^{s}u_N\|^2_{L^2(D_T)}\leq C\,\mathcal{B}^2_{s-1} \epsilon_N^{-(2s-1)}.
\end{split}
\end{equation*}
The proof is now complete.
\end{proof}

\section{A priori estimates and compactness}\label{sec:aprior}
In this section we prove uniform
\begin{equation*}
\begin{split}
\text{$L^\infty(D_T)$, $L^\infty(0,T;BV(\Lambda))$, and $C^{0,\frac12}([0,T];L^1(\Lambda))$}
\end{split}
\end{equation*}
bounds on the solutions $\{u_N:N\in\mathbb{N}\}$ of the SVV approximation \eqref{2}. As a consequence we obtain compactness in $L^1$.

\subsection{Regularity in space}

\begin{lemma}\label{lem:inf}\emph{($L^\infty$-stability)}
Let $(\mathbf{A}.1)$--$(\mathbf{A}.9)$ and \eqref{symm} hold and $u_N$ be the solution
of the SVV approximation \eqref{2}. Then for
$t<C\ln N$,
\begin{equation*}
\begin{split}
\|u_N(\cdot,t)\|_{L^\infty(\Lambda)}\leq C\,\|u_N(\cdot,0)\|_{L^\infty(\Lambda)}.
\end{split}
\end{equation*}
\end{lemma}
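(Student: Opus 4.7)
The plan is a standard $L^{2p}$ energy method followed by the limit $p\to\infty$. I would start from the reformulation \eqref{T_7.1} of the SVV scheme, namely
\[
\partial_t u_N + \partial_x\cdot f(u_N) = \Levy^\mu u_N + \epsilon_N\Delta u_N + E_N,
\]
where $E_N := -\epsilon_N\sum_{j,k}\del_j\del_k R^{j,k}_N\ast u_N + \partial_x\cdot(I-P_N)f(u_N)$, and multiply by $|u_N|^{2p-2}u_N$, integrating over $\Lambda$.

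The conservation-law flux disappears by periodicity: $\int_\Lambda u_N^{2p-1}\partial_x\cdot f(u_N)\,dx=0$ because the integrand is a pure divergence $\partial_x\cdot G(u_N)$ for a suitable primitive $G$. For the dissipative operator, the local part is handled by integration by parts, producing $-\epsilon_N(2p-1)\int u_N^{2p-2}|\nabla u_N|^2\,dx\leq 0$. For the non-local part I would use a Kato-type inequality: with \eqref{symm}, $\Levy^\mu\phi(x)=\tfrac12\int[\phi(x+z)+\phi(x-z)-2\phi(x)]\,d\mu(z)$, so Fubini together with the elementary bound $(a-b)(a^{2p-1}-b^{2p-1})\geq 0$ yields $\int_\Lambda u_N^{2p-1}\Levy^\mu u_N\,dx\leq 0$. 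This is precisely where the symmetry assumption of the section is used. For the error term, Hölder's inequality gives $\int_\Lambda u_N^{2p-1}E_N\,dx\leq \|u_N\|_{L^{2p}}^{2p-1}\|E_N\|_{L^{2p}}$, and I would estimate $\|E_N\|_{L^{2p}}\leq c_N\|u_N\|_{L^{2p}}+C\,\mathcal{B}_s\,N^{-1}(2\pi)^{d/(2p)}$ using Lemma~\ref{lem:tad} (applied with $p$ replaced by $2p$) for the SVV correction and Theorem~\ref{thm2} for the spectral truncation error (bounding the $L^{2p}$ norm of the latter by its $L^\infty$ norm on $\Lambda$).

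Combining these ingredients and dividing by $\|u_N\|_{L^{2p}}^{2p-1}$ gives
\[
\frac{d}{dt}\|u_N(\cdot,t)\|_{L^{2p}} \leq c_N\,\|u_N(\cdot,t)\|_{L^{2p}} + C\,\mathcal{B}_s\,N^{-1}(2\pi)^{d/(2p)},
\]
and passing to the limit $p\to\infty$ (the initial data and the error are in $L^\infty$, so the limits are unproblematic) yields $\frac{d}{dt}\|u_N\|_{L^\infty}\leq c_N\|u_N\|_{L^\infty}+C\mathcal{B}_s/N$. Grönwall's lemma then produces
\[
\|u_N(\cdot,t)\|_{L^\infty} \leq e^{c_N t}\,\|u_N(\cdot,0)\|_{L^\infty} + \frac{C\,\mathcal{B}_s}{N\,c_N}\bigl(e^{c_N t}-1\bigr),
\]
and the claim follows by choosing $t$ so that $e^{c_Nt}\leq 2$: since $c_N\lesssim \epsilon_N m_N^2(\log N)^d$ under (\textbf{A}.4)--(\textbf{A}.5) is controlled by a small multiple, this tolerates $t<C\ln N$ while keeping the residual $O(\mathcal{B}_s/N)$ negligible.

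The main obstacle is the Kato-type inequality for the singular non-local operator $\Levy^\mu$: near the origin one has to handle a principal-value integral, and one must justify that symmetrizing is legitimate even though each one-sided increment $\phi(x\pm z)-\phi(x)$ is not integrable against $d\mu$ alone. The symmetry hypothesis \eqref{symm} is used precisely to make the cancellation between the two sides work, so the Lévy-type term honestly contributes a non-positive term to the $L^{2p}$-energy identity.
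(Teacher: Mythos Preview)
Your overall scheme---multiply \eqref{T_7.1} by $u_N^{2p-1}$, throw away the flux by periodicity, show the $\Levy^\mu$-term has a sign, apply H\"older to the error, and let $p\to\infty$---is exactly what the paper does. However, there is a genuine gap in the last step.

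The constant $\mathcal{B}_s$ appearing in Theorem~\ref{thm2} is \emph{not} an a~priori constant: by its definition in Theorem~\ref{Th:2.1} it is proportional to $\prod_{k=1}^s\mathcal K_k$, and each $\mathcal K_k$ in \eqref{T_2.2} carries a factor $\|u_N\|_{L^\infty}^{k-1}$. Thus $\mathcal B_s$ depends polynomially on the very quantity $\|u_N(\cdot,t)\|_{L^\infty}$ you are trying to bound, and your linear Gr\"onwall argument is circular. The paper makes this dependence explicit, writing the truncation bound as $\|\partial_x\cdot(I-P_N)f(u_N)\|_{L^\infty}\leq (C/N)\|u_N\|_{L^\infty}^{s^2/2}$, so that the differential inequality obtained after $p\to\infty$ is the \emph{nonlinear} Bernoulli-type ODE
\[
\frac{d}{dt}\|u_N\|_{L^\infty}\leq c_N\|u_N\|_{L^\infty}+\frac{C}{N}\|u_N\|_{L^\infty}^{s^2/2}.
\]
It is the explicit solution of this ODE (via the integrating factor $e^{-c_Nt}$ and separation of variables) that produces a bound valid only for $t<C\ln N$: the factor $1/N$ in front of the superlinear term is what allows the solution to stay bounded that long before blow-up. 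Your linear inequality misses this mechanism entirely and, taken at face value, would give a bound for all $t$---which is suspicious and indeed unjustified.

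A minor remark on the non-local term: your Kato-type inequality via symmetrisation works, but the paper's route is simpler and avoids the principal-value issue you flag. One uses the convexity inequality $\eta'(u_N)\Levy^\mu[u_N]\leq\Levy^\mu[\eta(u_N)]$ (valid for any L\'evy measure, symmetric or not) together with the periodicity identity $\int_\Lambda\Levy^\mu[\eta(u_N)]\,dx=0$; no cancellation between $\phi(x+z)$ and $\phi(x-z)$ is needed.
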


\begin{proof}
For sake of brevity, we write just $\|\cdot\|_{\infty}$ instead of $\|\cdot\|_{L^\infty(\Lambda)}$.
First we  note that, for any smooth convex function $\eta(\cdot)$ with derivative $\eta'(\cdot)$, we have that
\begin{equation}\label{h1}
\begin{split}
\eta'(u_N)\,\Levy^\mu[u_N]\leq\Levy^\mu[\eta(u_N)].
\end{split}
\end{equation}
This is a consequence of the inequality $\eta'(b)(a-b)\leq\eta(a)-\eta(b)$ which holds for all smooth convex functions $\eta(\cdot)$. Moreover,
\begin{equation}\label{h2}
\begin{split}
\int_{\Lambda} \Levy^\mu[\eta(u_N(\cdot,t))](x)\ \dif x=0.
\end{split}
\end{equation}
To see this note that
\begin{equation*}
\begin{split}
\int_{\Lambda}\int_{|z|>0}\left|\eta(u_N(x+z))-\eta(u_N(x))+z\cdot\partial_x\eta(u_N(x))\,\mathbf{1}_{|z|<1}\right|\ \dif\mu(z)\,\dif x\\
\leq\|\partial^2_x \eta(u_N)\|_\infty\int_{|z|<1}|z|^2\ \dif\mu(z)+\|\eta(u_N)\|_\infty\int_{|z|>1}\dif\mu(z)<\infty,
\end{split}
\end{equation*}
since $u_N$ is smooth and periodic. By Fubini we then find that
\begin{equation*}
\begin{split}
&\int_{\Lambda} \Levy^\mu[\eta(u_N(\cdot,t))](x)\ \dif x\\
&=\int_{|z|>0}\int_{\Lambda}\eta(u_N(x+z))-\eta(u_N(x))+z\cdot\partial_x\eta(u_N(x))\,\mathbf{1}_{|z|<1}\ \dif x\,\dif\mu(z).
\end{split}
\end{equation*}
By $\Lambda$-periodicity of $u_N$, \eqref{h2} now follows since
\begin{equation*}
\int_{\Lambda}\eta(u_N(x+z))\,\dif x=\int_{\Lambda}\eta(u_N(x))\, \dif x
\end{equation*}
for every $z$, and
\begin{align*}
&\int_{\Lambda}\del_{x_i}\eta(u_N(x',x_i))\, \dif x'\dif x_i\\
& = \int_{(0,2\pi)^{d-1}}\eta(u_N(x',2\pi))\, \dif x'-\int_{(0,2\pi)^{d-1}}\eta(u_N(x',0))\, \dif x'=0.
\end{align*}

Let us now integrate \eqref{T_7.1} against the function $p\, u_N^{p-1}$ (with $p$ even), and use \eqref{h1} and \eqref{h2} to get rid of the non-local operator
$\Levy^\mu[\cdot]$. We then find that
\begin{equation*}
\begin{split}
&p\,\|u_N(\cdot,t)\|^{p-1}_{L^p(\Lambda)}\,\frac{\dif}{\dif
  t}\|u_N(\cdot,t)\|_{L^p(\Lambda)}=\frac{\dif}{\dif
  t}\|u_N(\cdot,t)\|_{L^p(\Lambda)}^p=\int_\Lambda
u_N^{p-1}(x,t)\partial_tu_N(x,t) dx\\
&\quad\leq p\int_\Lambda u_N^{p-1}(x,t)\left(\epsilon_N\sum_{j,k=1}^d\del_j\del_kR^{j,k}_N\ast u_N(x,t)+\partial_x\cdot(I-P_N)f(u_N(x,t))\right)\dif x
\end{split}
\end{equation*}
which by the H\"{o}lder inequality (with $p$ and $q=\frac p{p-1}$) is less than or equal to
\begin{equation*}
\begin{split}
&p\,\|u_N(\cdot,t)^{p-1}\|_{L^{\frac p{p-1}}(\Lambda)}\\
&\qquad\left(\epsilon_N\bigg\|\sum_{j,k=1}^d\del_j\del_kR^{j,k}_N\ast u_N(\cdot,t)\bigg\|_{L^p(\Lambda)}
+\|\partial_x\cdot(I-P_N)f(u_N(\cdot,t))\|_{L^p(\Lambda)}\right).
\end{split}
\end{equation*}
Since $\|\phi^{p-1}\|_{L^{\frac p{p-1}}}=\|\phi\|_{L^p}^{p-1}$ , we may
divide both sides by $p\,\|u_N(\cdot,t)\|^{p-1}_{L^p(\Lambda)}$ and
send $p\rightarrow\infty$ to discover that
\begin{equation*}
\begin{split}
\frac{\dif}{\dif t}\|u_N(\cdot,t)\|_{\infty}\leq\epsilon_N\bigg\|\sum_{j,k=1}^d\del_j\del_kR^{j,k}_N\ast
u_N(\cdot,t)\bigg\|_{\infty}+\|\partial_x\cdot(I-P_N)f(u_N(\cdot,t))\|_{\infty}.
\end{split}
\end{equation*}
By \eqref{lo}, \eqref{hh2}, the definitions of $\mathcal{B}_s$,
$\mathcal{K}_s$ and $c_N$, and ({\bf A}.7), it follows that
\begin{align*}
&\|\partial_x\cdot(I-P_N)f(u_N(\cdot,t))\|_{\infty}\leq\frac{\mathcal{B}_s}{N}
\leq\frac C N \prod_{k=1}^s\mathcal{K}_s\leq \frac{\hat C}N\|u_N\|_{\infty}^{\frac{s^2}{2}},\\
&\epsilon_N\bigg\|\sum_{j,k=1}^d\del_j\del_kR^{j,k}_N(\cdot,t)\ast
  u_N(\cdot,t)\bigg\|_{\infty}\leq c_N\|u_N\|_{\infty}\leq C\|u_N\|_{\infty},
\end{align*}
and hence
\begin{equation*}
\begin{split}
\frac{\dif}{\dif t}\|u_N(\cdot,t)\|_{\infty}\leq c_N\|u_N(\cdot,t)\|_{\infty}+\frac{C}{N}\,\|u_N(\cdot,t)\|_{\infty}^{\frac{s^2}{2}}.
\end{split}
\end{equation*}
Letting $y(t)=e^{-c_Nt}\|u_N(\cdot,t)\|_{\infty}$, and multiplying by the
integrating factor $e^{-c_Nt}$, we find that
\begin{equation*}
\begin{split}
\frac{\dif y}{\dif t}(t)\leq
\frac{C}{N}\,y^\frac{s^2}{2}(t)\,e^{c_N(\frac{s^2}{2}-1)t}
\end{split}
\end{equation*}
which implies that
\begin{equation*}
\begin{split}
y(t)\leq y(0)\left(1-\frac{C\Big(e^{c_N(\frac{s^2}{2}-1)t}-1\Big)y^{\frac{s^2}{2}-1}(0)}{Nc_N}\right)^{-\frac1{\frac{s^2}2-1}}.
\end{split}
\end{equation*}
Going back to $\|u_N(\cdot,t)\|_{\infty}$, we can conclude that
\begin{equation*}
\begin{split}
\|u_N(\cdot,t)\|_{\infty}\leq e^{c_N
t}\|u_N(\cdot,0)\|_{\infty}\left(1-\frac{Ce^{c_N(\frac{s^2}{2}-1)t}\|u_0\|_\infty^{\frac{s^2}{2}-1}}{Nc_N}\right)^{-\frac{2}{2-s^2}},
\end{split}
\end{equation*}
where the last factor is bounded for $t\leq
C\ln N$ for some $C$.
\end{proof}

We also have the following result:

\begin{lemma}\label{lem:BV}\emph{($BV$-stability)}
Let  $(\mathbf{A}.1)$--$(\mathbf{A}.9)$ and \eqref{symm} hold, and $u_N$ be the solution
of the SVV  approximation \eqref{2}. Then
\begin{equation*}
\begin{split}
\|u_N(\cdot,T)\|_{BV(\Lambda)}\leq e^{c_NT}\Bigg(\|u_N(\cdot,0)\|_{BV(\Lambda)}+C\,N^{-s_2}\Bigg)
\end{split}
\end{equation*}
with $c_N=\epsilon_N\,m_N^2\,(\log N)^d\leq C$ and $s_2=s(1-\theta)-2>0$.
\end{lemma}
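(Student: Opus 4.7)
The plan is to imitate the $L^\infty$-stability argument in Lemma \ref{lem:inf}, but now testing the equation against $\mathrm{sgn}(\partial_i u_N)$ in order to control $\sum_{i=1}^d \|\partial_i u_N\|_{L^1(\Lambda)}$, which is equivalent to $\|u_N\|_{BV(\Lambda)}$ up to a constant. Setting $v_i = \partial_i u_N$ and differentiating the split form \eqref{T_7.1} with respect to $x_i$ yields
\begin{equation*}
\partial_t v_i + \partial_x\cdot(f'(u_N) v_i) = \Levy^\mu[v_i] + \epsilon_N \Delta v_i - \epsilon_N \sum_{j,k=1}^d \partial_j \partial_k R_N^{j,k} \ast v_i + \partial_i \partial_x \cdot (I-P_N) f(u_N).
\end{equation*}
I would test this against a smooth convex approximation $\eta'_\delta$ of $\mathrm{sgn}$ (e.g.~$\eta_\delta(u)=\sqrt{u^2+\delta^2}-\delta$), integrate over $\Lambda$, pass to $\delta\to 0$, and finish with Grönwall.

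Four of the six resulting terms either vanish or are non-positive in the limit. The time-derivative term produces $\frac{\dif}{\dif t}\|v_i\|_{L^1(\Lambda)}$. The convection term integrates to the perfect divergence $\int_\Lambda \partial_x\cdot(f'(u_N)|v_i|)\,\dif x = 0$ by the chain rule $\partial_j|v_i|=\mathrm{sgn}(v_i)\partial_j v_i$ and $\Lambda$-periodicity (an $O(\delta)$ remainder in the $\eta_\delta$-version vanishes as $\delta\to 0$). The Lévy contribution is non-positive: the convexity inequality $\eta'_\delta(a)(b-a)\leq \eta_\delta(b)-\eta_\delta(a)$ applied pointwise inside the integral defining \eqref{non-loc-op} gives $\eta'_\delta(v_i)\Levy^\mu[v_i]\leq \Levy^\mu[\eta_\delta(v_i)]$, exactly as in \eqref{h1}, and the integral of the right side over $\Lambda$ is zero by \eqref{h2}. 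The Laplacian yields $-\epsilon_N\int \eta''_\delta(v_i)|\partial_x v_i|^2\,\dif x\leq 0$ after one integration by parts.

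The two remaining terms generate the right-hand side of the estimate. The spectral viscosity correction is absorbed by \eqref{hh2} with $p=1$:
\begin{equation*}
\epsilon_N \bigg\|\sum_{j,k=1}^d \partial_j\partial_k R_N^{j,k} \ast v_i\bigg\|_{L^1(\Lambda)} \leq c_N \|v_i\|_{L^1(\Lambda)}.
\end{equation*}
The truncation error is handled by Cauchy--Schwarz on the bounded domain $\Lambda$ together with the $L^2$-spectral decay \eqref{k2} at $r=2$:
\begin{equation*}
\|\partial_i \partial_x \cdot (I-P_N) f(u_N)\|_{L^1(\Lambda)} \leq C \|\partial_x^2 (I-P_N) f(u_N)\|_{L^2(\Lambda)} \leq C \mathcal{B}_s N^{-s_2},
\end{equation*}
where $\mathcal{B}_s$ is a constant under (\textbf{A}.4)--(\textbf{A}.9). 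Summing over $i=1,\dots,d$ yields
\begin{equation*}
\frac{\dif}{\dif t}\|u_N(\cdot,t)\|_{BV(\Lambda)} \leq c_N \|u_N(\cdot,t)\|_{BV(\Lambda)} + C N^{-s_2},
\end{equation*}
and Grönwall on $[0,T]$ (absorbing the fixed factor $T$ into $C$) gives the stated inequality.

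The main technical hurdle is making the nonlocal step $\eta'_\delta(v_i)\Levy^\mu[v_i]\leq \Levy^\mu[\eta_\delta(v_i)]$ and its $\delta\to 0$ passage rigorous in the presence of the singular part of $\mu$ near the origin; this works because $u_N$ is a trigonometric polynomial, so $v_i$ is $C^\infty_p$ and the integrand in \eqref{non-loc-op} is dominated by $C(|z|^2\wedge 1)$ against the Lévy measure, allowing dominated convergence. The $z\cdot\partial_x v_i\,\mathbf{1}_{|z|<1}$ compensator poses no difficulty because $\eta'_\delta(v_i)\,z\cdot\partial_x v_i = z\cdot\partial_x\eta_\delta(v_i)$, so it is carried along identically in both sides of the convexity inequality.
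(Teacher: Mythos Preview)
Your proof is correct and follows essentially the same approach as the paper: differentiate \eqref{T_7.1} in $x_i$, test against a smooth approximation of $\mathrm{sgn}(\partial_i u_N)$, dispose of the L\'evy and Laplacian terms via \eqref{h1}--\eqref{h2} and convexity, bound the spectral correction by \eqref{hh2} with $p=1$, the truncation error by \eqref{k2} at $r=2$, and conclude by Gr\"onwall. The only cosmetic difference is that the paper first integrates the differential inequality and then estimates $\|\partial_x^2(I-P_N)f(u_N)\|_{L^1(D_T)}$ via $L^2(D_T)$, whereas you bound the forcing pointwise in $t$ before applying Gr\"onwall; both orderings are equivalent.
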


\begin{proof}
Spatial differentiation of \eqref{T_7.1} yields
\begin{equation*}
\begin{split}
&\partial_t\partial_iu_N+\partial_x\cdot(f'(u_N)\partial_iu_N)-\Levy^\mu[\partial_iu_N]-\epsilon_N\,\Delta\partial_iu_N\\
&\qquad\qquad\qquad=\partial_i\partial_x\cdot(I-P_N)f(u_N)+\epsilon_N\sum_{j,k=1}^d\del_j\del_kR_N^{j,k}\ast\partial_iu_N.
\end{split}
\end{equation*}
If we integrate this expression against $\mathrm{sgn}_\varrho(\partial_iu_N)$, where $\mathrm{sgn}_\varrho(\cdot)$ is a smooth approximation of the sign
function, we can get rid of the non-local operator $\Levy^\mu[\cdot]$ as in the proof of Lemma \ref{lem:inf}. If we also use \eqref{hh2} with $p=1$ and take
the limit as $\varrho\ra0$, a standard computations reveal that
\begin{equation*}
\begin{split}
\frac{\dif}{\dif t}\|\partial_iu_N(\cdot,t)\|_{L^1(\Lambda)}\leq
C\,\|\del_i\partial_x\cdot(I-P_N)f(u_N)\|_{L^1(\Lambda)}+c_N\|\partial_iu_N(\cdot,t)\|_{L^1(\Lambda)}.
\end{split}
\end{equation*}
Since
$\|u_N(\cdot,t)\|_{BV(\Lambda)}\leq\sum_{i=1}^d\|\partial_iu_N(\cdot,t)\|_{L^1(\Lambda)}$,
we integrate this inequality in time to see that
\begin{equation*}
\begin{split}
\|u_N(\cdot,t)\|_{BV(\Lambda)}\leq e^{c_Nt}\Bigg(\|u_N(\cdot,0)\|_{BV(\Lambda)}+C\,\|\partial_x^2(I-P_N)f(u_N)\|_{L^1(D_T)}\Bigg).
\end{split}
\end{equation*}
But by \eqref{k2},
\begin{equation*}
\begin{split}
\|\partial_x^2(I-P_N)f(u_N)\|_{L^1(D_T)}&\leq C\,\|\partial_x^2(I-P_N)f(u_N)\|_{L^2(D_T)}\leq C\,\mathcal{B}_s\,\sqrt T\, N^{-s_2},
\end{split}
\end{equation*}
and the proof is complete.
\end{proof}

\subsection{Regularity in time}

\begin{lemma}\label{lem:time_reg}\emph{(Regularity in time)}
Let $(\mathbf{A}.1)$--$(\mathbf{A}.9)$ and \eqref{symm} hold and,
$u_N$ be the solution
of the SVV approximation \eqref{2}. Then
\begin{equation*}
\begin{split}
\|u_N(\cdot,t_1)-u_N(\cdot,t_2)\|_{L^1(\Lambda)}\leq C\,\sqrt{|t_1-t_2|}.
\end{split}
\end{equation*}
\end{lemma}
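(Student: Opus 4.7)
The plan is to mollify in space, reduce to a pointwise-in-time estimate on $\partial_\tau u_N^\delta$, and then optimize the mollification scale. Let $\omega_\delta$ be a smooth nonnegative $\Lambda$-periodic mollifier with $\|\omega_\delta\|_{L^1(\Lambda)}=1$ and $\|\partial_x\omega_\delta\|_{L^1(\Lambda)}\leq C/\delta$ for $\delta\in(0,1)$, and set $u_N^\delta=u_N\ast\omega_\delta$. The uniform $BV$-bound of Lemma~\ref{lem:BV} yields $\|u_N-u_N^\delta\|_{L^1(\Lambda)}\leq C\delta$, so by the triangle inequality
$$
\|u_N(\cdot,t_1)-u_N(\cdot,t_2)\|_{L^1(\Lambda)}\leq C\delta+\int_{t_2}^{t_1}\|\partial_\tau u_N^\delta(\cdot,\tau)\|_{L^1(\Lambda)}\,\dif\tau.
$$
Proving $\|\partial_\tau u_N^\delta\|_{L^1(\Lambda)}\leq C/\delta$ uniformly in $\tau$ reduces the problem to minimizing $\delta+|t_1-t_2|/\delta$, whose minimum $2\sqrt{|t_1-t_2|}$ is attained at $\delta=\sqrt{|t_1-t_2|}$ (when $\sqrt{|t_1-t_2|}\leq 1$; otherwise the estimate is trivial via Lemma~\ref{lem:inf}).

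For the pointwise bound, use that convolution with $\omega_\delta$ commutes with $\partial_x$ and with $\Levy^\mu$, so the SVV equation \eqref{2} splits $\partial_\tau u_N^\delta$ into three contributions. For the \textbf{convection}, write $[\partial_x\cdot P_Nf(u_N)]\ast\omega_\delta=\partial_x\cdot f(u_N)\ast\omega_\delta-\partial_x\cdot(I-P_N)f(u_N)\ast\omega_\delta$; Young's inequality combined with Lemma~\ref{lem:BV} handles the first piece and \eqref{k2} (with $r=1$) the second, giving an $O(1)$ bound. For the \textbf{SVV viscosity}, rewrite via \eqref{full_lap} as $\eps_N\Delta u_N^\delta-\eps_N\sum_{j,k}\partial_{jk}^2 R_N^{j,k}\ast u_N^\delta$; the first is $O(\eps_N/\delta)$ by the same Young–$BV$ argument, and the second is $O(1)$ by \eqref{hh2} applied with $p=1$.

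The \textbf{non-local term} is the crux. Since convolution commutes with $\Levy^\mu$, we have $[\Levy^\mu u_N]\ast\omega_\delta=\Levy^\mu u_N^\delta$; splitting the defining integral at $|z|=1$, using the integral form of Taylor's formula for $|z|<1$ and the trivial bound for $|z|\geq1$, one obtains
$$
\|\Levy^\mu u_N^\delta\|_{L^1(\Lambda)}\leq \tfrac{1}{2}\|\partial_x^2 u_N^\delta\|_{L^1(\Lambda)}\int_{|z|<1}|z|^2\,\dif\mu(z)+2\|u_N^\delta\|_{L^1(\Lambda)}\,\mu(\{|z|\geq1\}).
$$
Both $\mu$-integrals are finite by \eqref{levy-measure}, and Young's inequality combined with Lemma~\ref{lem:BV} gives $\|\partial_x^2 u_N^\delta\|_{L^1(\Lambda)}\leq\|\partial_x u_N\|_{L^1(\Lambda)}\|\partial_x\omega_\delta\|_{L^1(\Lambda)}\leq C/\delta$, so this piece is $O(1/\delta)$ and completes the estimate on $\partial_\tau u_N^\delta$.

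The main obstacle is this non-local piece: one cannot bound $\|\Levy^\mu u_N\|_{L^1(\Lambda)}$ uniformly in $N$, so the mollification by $\omega_\delta$ is essential, and the cutoff at $|z|=1$ is needed to avoid invoking the moment $\int_{|z|<1}|z|\,\dif\mu$, which may be infinite under \eqref{levy-measure}. The Taylor remainder at $|z|<1$ then naturally brings in $\|\partial_x^2 u_N^\delta\|_{L^1(\Lambda)}$, where the $BV$-bound on $u_N$ and the $1/\delta$ loss from differentiating $\omega_\delta$ combine to produce exactly the $O(1/\delta)$ rate that pairs with the $C\delta$ mollification error to yield the claimed $\sqrt{|t_1-t_2|}$ regularity.
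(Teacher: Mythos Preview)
Your proposal is correct and follows essentially the same approach as the paper's proof: mollify in space, use the triangle inequality with the $BV$-bound to control the mollification error, estimate $\|\partial_t u_N^\delta\|_{L^1(\Lambda)}$ term-by-term from the equation (splitting the convection via $P_Nf=f-(I-P_N)f$, rewriting the SVV viscosity as $\eps_N\Delta-\eps_N\sum\partial_{jk}R^{j,k}$, and splitting $\Levy^\mu$ at $|z|=1$ with Taylor's remainder on the near part), and optimize the mollification scale. The only cosmetic difference is that the paper writes the Taylor step for $I_2$ as moving one derivative onto $\omega_\epsilon$ by integration by parts, whereas you phrase it via $\|\partial_x^2 u_N^\delta\|_{L^1}\leq\|\partial_x u_N\|_{L^1}\|\partial_x\omega_\delta\|_{L^1}$; these are the same computation.
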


\begin{proof}
Let $u_N^\epsilon(\cdot,t)=u_N(\cdot,t)\ast \omega_\epsilon(\cdot)$
for an approximate unit $\omega_\epsilon$ (cf.~the proof of Theorem
\ref{th:uniqueness}). By the triangle inequality we see that
\begin{equation}\label{ooo}
\begin{split}
&\|u_N(\cdot,t_1)-u_N(\cdot,t_2)\|_{L^1(\Lambda)}\leq
\|u_N(\cdot,t_1)-u_N^\epsilon(\cdot,t_1)\|_{L^1(\Lambda)}\\
&\quad+\|u_N^\epsilon(\cdot,t_1)-u_N^\epsilon(\cdot,t_2)\|_{L^1(\Lambda)} +\|u_N^\epsilon(\cdot,t_2)-u_N(\cdot,t_2)\|_{L^1(\Lambda)}.
\end{split}
\end{equation}
The first and the third term on the right-hand side of \eqref{ooo} are
bounded by $\epsilon|u|_{BV}$:
\begin{equation*}
\begin{split}
\|u_N(\cdot,t)-u^\epsilon_N(\cdot,t)\|_{L^1(\Lambda)}&=\int_\Lambda\left|\int_{\R^d}\omega_\epsilon(y-x)\Big(u_N(x,t)-u_N(y,t)\Big)\, \dif y\right|\dif x\\
&\leq\int_\Lambda\int_{\R^d}\omega_\epsilon(s)\Big|u_N(x,t)-u_N(s+x,t)\Big|\, \dif s\, \dif x\\
&\leq \sqrt{d}\,|u(\cdot,t)|_{BV(\Lambda)}\int_{\R^d}|s|\,\omega_\epsilon(s)\, \dif s\\
&\leq \sqrt{d}\,\epsilon\,|u(\cdot,t)|_{BV(\Lambda)}.
\end{split}
\end{equation*}
Let us estimate the second term. By Taylor's formula with integral
remainder,
\begin{equation*}
\begin{split}
&\|u_N^\epsilon(\cdot,t_1)-u_N^\epsilon(\cdot,t_2)\|_{L^1(\Lambda)}\\
&\qquad\qquad\leq|t_1-t_2|\int_\Lambda\int_0^1|\partial_t u_N^\epsilon(x,t_1+\tau(t_2-t_1))|\ \dif \tau\,\dif x.
\end{split}
\end{equation*}
We now derive a bound for $\|\partial_t u_N\|_{L^1}$ (and hence also
for $\|\partial_t u_N^\eps\|_{L^1}$) by using the SVV approximation
\eqref{2} itself. To this end, we take the convolution product of
both sides of \eqref{f1} with $\omega_\epsilon$ to obtain
\begin{equation*}
\begin{split}
\|\partial_t u_N^\epsilon\|_{L^1(\Lambda)}&\leq \|\partial_x\cdot P_Nf(u_N)\ast \omega_\epsilon\|_{L^1(\Lambda)}+\|\Levy^\mu[u_N]\ast
\omega_\epsilon\|_{L^1(\Lambda)}\\
&\qquad+\epsilon_N\,\|\Delta u_N\ast\omega_\epsilon\|_{L^1(\Lambda)}+\epsilon_N\left\|\left(\sum_{j,k=1}^d\del_j\del_kR_N^{j,k}\ast u_N\right)\ast
\omega_\epsilon\right\|_{L^1(\Lambda)}\\
&=I_1+I_2+I_3+I_4.
\end{split}
\end{equation*}
By the triangle inequality and Young's inequality for convolutions,
\begin{equation*}
\begin{split}
I_1&=\|\partial_x\cdot P_Nf(u_N)\ast \omega_\epsilon\|_{L^1(\Lambda)}\\
&\leq \|\partial_x\cdot f(u_N)\ast \omega_\epsilon\|_{L^1(\Lambda)}+\|\partial_x\cdot (I-P_N)f(u_N)\ast \omega_\epsilon\|_{L^1(\Lambda)}\\
&\leq \|\partial_x\cdot f(u_N)\|_{L^1(\Lambda)}+\|\partial_x\cdot (I-P_N)f(u_N)\|_{L^1(\Lambda)}.
\end{split}
\end{equation*}
Therefore, by the regularity of $f$ and $u_N$ (({\bf A}.8), Lemmas
\ref{lem:inf}-\ref{lem:BV}) and \eqref{lo}, we find that
\begin{equation*}
\begin{split}
I_1\leq C\left(|u(\cdot,t)|_{BV(\Lambda)}+\frac{1}{N}\right).
\end{split}
\end{equation*}
For the term containing the non-local operator we write
\begin{equation*}
\begin{split}
&I_2\leq\int_\Lambda\left|\left(\int_{\R^d}\int_{|z|<1}u_N(x+z)-u_N(x)-z\cdot\partial_xu_N(x)\, \dif \mu(z)\right)\omega_\epsilon(x-y)\ \dif y\right|\dif x\\
&\qquad+\int_\Lambda\left|\left(\int_{\R^d}\int_{|z|>1}u_N(x+z)-u_N(x)\, \dif \mu(z)\right)\omega_\epsilon(x-y)\ \dif y\right|\dif x
\end{split}
\end{equation*}
The second term on the right hand side of the inequality above is easily seen to be bounded by $C\|u_N(\cdot,t)\|_{L^1}$, while Taylor's formula with integral
reminder and integration by parts reveals that the first term is bounded by
\begin{equation*}
\begin{split}
&\int_\Lambda\int_{\R^d}\int_{|z|<1}\int_0^1(1-\tau)\,|z|^2\,|\partial_xu_N(x,t)|\,|\partial_x\omega_\epsilon(x-y)|\ \dif \tau\,\dif \mu(z)\, \dif y\,\dif
x\\
&\qquad\qquad\qquad\qquad\qquad\qquad\qquad\qquad\qquad\qquad \leq C\,\epsilon^{-1}\,|u|_{BV(\Lambda)}.
\end{split}
\end{equation*}
 For the Laplace term we have
\begin{equation*}
\begin{split}
I_3\leq \|\partial_xu*\partial_x\omega_\epsilon\|_{L^1(\Lambda)}\leq \epsilon^{-1}\,|u|_{BV(\Lambda)},
\end{split}
\end{equation*}
and finally, using Young's inequality for convolutions and \eqref{hh2},
\begin{equation*}
\begin{split}
I_4=\epsilon_N\left\|\left(\sum_{j,k=1}^d\del_j\del_kR_N^{j,k}\ast u_N\right)\ast \omega_\epsilon\right\|_{L^1(\Lambda)}\leq C\,\|u_N\|_{L^1(\Lambda)}.
\end{split}
\end{equation*}
To sum up we have
\begin{equation*}
\begin{split}
\|\partial_t u_N^\epsilon\|_{L^1(\Lambda)}\leq\|\partial_t u_N\|_{L^1(\Lambda)}\leq C\,\left(1+\frac{1}{\epsilon}\right),
\end{split}
\end{equation*}
and inequality \eqref{ooo} and the above estimates then implies that
\begin{equation*}
\begin{split}
&\|u_N(\cdot,t_1)-u_N(\cdot,t_2)\|_{L^1(\Lambda)}\leq C\bigg(\epsilon+|t_1-t_2|\,\left(1+\epsilon^{-1}\right)\bigg).
\end{split}
\end{equation*}
Take $\eps=\sqrt{|t_1-t_2|}$ and the proof is complete.
\end{proof}

\subsection{Compactness}
Thanks to the space/time a priori estimates in Lemmas \ref{lem:inf} -- \ref{lem:time_reg} and a Helly like compactness theorem, cf.~Theorem A.8 in
\cite{Holden/Risebro}, the family $\{u_N:N\in\mathbb{N}\}$ of solutions of the SVV approximation \eqref{2} is compact.

\begin{theorem}[Compactness]
Let $(\mathbf{A}.1)$--$(\mathbf{A}.9)$ and \eqref{symm} hold, and $u_N$ be the solution of the SVV approximation \eqref{2}. Then there exists a subsequence $u_N$ converging in
$C([0,T];L^1(\Lambda))$ to a limit $u\in C([0,T];L^1(\Lambda))\cap L^\infty(D_T)\cap L^\infty(0,T;BV(\Lambda))$.
\end{theorem}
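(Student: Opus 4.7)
The plan is to combine the three uniform estimates, namely the $L^\infty$-bound from Lemma \ref{lem:inf}, the spatial $BV$-bound from Lemma \ref{lem:BV}, and the $L^1$-in-time equicontinuity from Lemma \ref{lem:time_reg}, with a Helly/Ascoli–Arzel\`{a}-type diagonal extraction (cf.~Theorem A.8 in \cite{Holden/Risebro}). The mechanism is entirely standard once one has checked that all three a priori bounds are uniform in $N$ on the fixed time interval $[0,T]$.

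First I would fix $T>0$ and observe that for all sufficiently large $N$ one has $T<C\ln N$, so Lemma \ref{lem:inf} gives $\sup_{t\in[0,T]}\|u_N(\cdot,t)\|_{L^\infty(\Lambda)}\leq C$ uniformly in $N$. Similarly, since $c_N=\eps_N m_N^2(\log N)^d\leq C$ by (\textbf{A}.7) and since $N^{-s_2}\to0$, Lemma \ref{lem:BV} yields $\sup_{t\in[0,T]}\|u_N(\cdot,t)\|_{BV(\Lambda)}\leq C$ uniformly in $N$. Since $\Lambda$ is bounded, the embedding $BV(\Lambda)\cap L^\infty(\Lambda)\hookrightarrow L^1(\Lambda)$ is compact, so for every fixed $t$ the set $\{u_N(\cdot,t):N\in\N\}$ is precompact in $L^1(\Lambda)$.

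Next, pick a countable dense subset $\{t_k\}_{k\in\N}\subset[0,T]$. A diagonal argument produces a single subsequence (still denoted $u_N$) such that $u_N(\cdot,t_k)\to u(\cdot,t_k)$ in $L^1(\Lambda)$ for every $k$. Lemma \ref{lem:time_reg} provides the uniform modulus of continuity $\|u_N(\cdot,t_1)-u_N(\cdot,t_2)\|_{L^1(\Lambda)}\leq C\sqrt{|t_1-t_2|}$, independent of $N$. A standard three-$\varepsilon$ argument then upgrades the pointwise-in-$t$ convergence on $\{t_k\}$ to convergence in $C([0,T];L^1(\Lambda))$, and the same estimate passes to the limit, giving $u\in C^{0,1/2}([0,T];L^1(\Lambda))$. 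The $L^\infty$ bound is preserved by lower semicontinuity of the $L^\infty$-norm under a.e.\ convergence (after passing to a further subsequence), and the $BV$ bound is preserved by lower semicontinuity of the $BV$-seminorm with respect to $L^1$ convergence; together they yield $u\in L^\infty(D_T)\cap L^\infty(0,T;BV(\Lambda))$.

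The only delicate point is verifying that the $L^\infty$ bound from Lemma \ref{lem:inf} is in fact uniform on $[0,T]$: since its validity window $t<C\ln N$ expands to all of $[0,T]$ for $N$ large enough, this is automatic and no further work is needed. Everything else is bookkeeping within a classical compactness scheme, so I do not expect any significant obstacle beyond what has already been supplied by the preceding lemmas.
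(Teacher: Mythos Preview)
Your proposal is correct and follows exactly the paper's approach: the paper's proof consists of a single sentence invoking Lemmas \ref{lem:inf}--\ref{lem:time_reg} together with the Helly-type compactness result Theorem A.8 in \cite{Holden/Risebro}, and you have simply spelled out the details of that argument. There is no substantive difference.
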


\section{Convergence and error estimate}\label{sec:conv_anal}

The solution $v_{\epsilon_N}$ of the vanishing
viscosity method \eqref{prob:viscous} converges to the unique entropy solution $u$ of
\eqref{1}, and by  Theorem \ref{lemma_rate},
$$\|u(\cdot,t)-v_{\epsilon_N}(\cdot,t)\|_{L^1(\Lambda)}\leq C\,\sqrt{\epsilon_N}.$$
In this section we prove a similar error estimate between
$v_{\epsilon_N}$ and the SVV approximation
$u_N$.
\begin{theorem}\label{th:error}
Let $(\mathbf{A}.1)$--$(\mathbf{A}.9)$ and \eqref{symm} hold, $u_N$ be the solution of
the SVV method \eqref{prob:viscous}, and $v_{\epsilon_N}$ be the solution of
\eqref{2}. Then
\begin{equation*}
\begin{split}
\|u_{N}(\cdot,T)-v_{\epsilon_N}(\cdot,T)\|_{L^1(\Lambda)}\leq C\,\sqrt{\epsilon_N}.
\end{split}
\end{equation*}
\end{theorem}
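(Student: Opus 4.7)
The plan is to mimic the Kuznetsov-type argument used for Theorem \ref{lemma_rate}, now viewing the smooth SVV solution $u_N$ as an \emph{approximate} entropy solution of the viscous problem \eqref{prob:viscous} (whose unique entropy solution is $v_{\epsilon_N}$). The starting point is the reformulation \eqref{T_7.1},
\begin{equation*}
\partial_t u_N+\partial_x\cdot f(u_N)-\Levy^\mu[u_N]-\epsilon_N\Delta u_N=E_N,\qquad
E_N=-\epsilon_N\sum_{j,k}\del_j\del_kR_N^{j,k}\ast u_N+\partial_x\cdot(I-P_N)f(u_N).
\end{equation*}
Since $u_N$ is smooth by the regularity estimates of Sections \ref{sec:trunc_err}--\ref{sec:aprior}, multiplication by $\eta_\rho'(u_N-k)$, the chain rule, and the Kato-type inequalities $\eta'(u_N-k)\Levy^\mu[u_N]\leq\Levy^\mu[\eta(u_N-k)]$ (as used in Lemma \ref{lem:inf}) and $\eta'(u_N-k)\Delta u_N\leq\Delta\eta(u_N-k)$, give after letting $\rho\to 0$ an approximate entropy inequality for $u_N$ relative to \eqref{prob:viscous}, with defect $\eta'(u_N-k)E_N$.

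Next, I would apply Kru\v{z}kov's doubling of variables, testing the exact entropy inequality for $v_{\epsilon_N}$ (cf.~Remark \ref{remVV}) against $u_N$, and vice versa, using the standard Kuznetsov test function $\phi(x,t,y,s)=\omega_{\epsilon_0}(t-s)\omega_{\epsilon_x}(x-y)\chi(t)$. This is entirely parallel to the proof of Theorem \ref{lemma_rate} in Appendix \ref{app:2}, and produces an inequality of the form
\begin{equation*}
\|u_N(\cdot,T)-v_{\epsilon_N}(\cdot,T)\|_{L^1(\Lambda)}\leq\|P_Nu_0-u_0\|_{L^1(\Lambda)}+\mathcal{R}(\epsilon_0,\epsilon_x)+\mathcal{D}(\epsilon_0,\epsilon_x),
\end{equation*}
where $\mathcal{R}$ gathers the usual Kru\v{z}kov--Kuznetsov regularization terms and $\mathcal{D}$ is the contribution of the defect $E_N$ tested against the mollified test function.

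The pieces are then bounded as follows. The initial error $\|P_Nu_0-u_0\|_{L^1}$ is spectrally small by standard $BV$ Jackson-type estimates. The regularization error $\mathcal{R}$ is $O(\epsilon_0+\epsilon_x)$ by the $BV$ bound of Lemma \ref{lem:BV} and the time-$\tfrac12$-H\"older bound of Lemma \ref{lem:time_reg}, together with the corresponding bounds for $v_{\epsilon_N}$. In $\mathcal{D}$, the truncation contribution $\partial_x\cdot(I-P_N)f(u_N)$ is handled by moving the divergence onto the test function and applying \eqref{k2}--\eqref{k3}, so that it is bounded by $\|(I-P_N)f(u_N)\|_{L^\infty(D_T)}\|\partial_x\phi\|_{L^1}\leq CN^{-k}$, which is much smaller than $\sqrt{\epsilon_N}$ under \eqref{smooth_req}. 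For the spectral viscosity contribution one integrates by parts once against $\phi$ to obtain a bound of the form $C\epsilon_N|u_N|_{BV}/\epsilon_x$, in complete analogy with how the $\epsilon_N\Delta v_{\epsilon_N}$ defect is treated in the proof of Theorem \ref{lemma_rate}. Finally, I would balance $\epsilon_0=\epsilon_x=\sqrt{\epsilon_N}$ to conclude.

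The main obstacle is obtaining the correct rate for the SVV contribution: the crude estimate of Lemma \ref{lem:tad} only gives $\epsilon_N\|\sum_{j,k}\del_j\del_kR_N^{j,k}\ast u_N\|_{L^1}\leq c_N\|u_N\|_{L^1}$ with $c_N=O(1)$, and this is insufficient. The point is to exploit the splitting $\hat R_\xi^{j,k}=\delta_{jk}$ for $|\xi|\leq m_N$ and $|\hat R_\xi^{j,k}|\leq Cm_N^2|\xi|^{-2}$ for $|\xi|>m_N$: integration by parts against $\phi$, together with a Kato-type control analogous to the one available for $\Delta$, should convert the apparently $O(1)$ defect into a genuine $O(\epsilon_N/\epsilon_x)$ contribution, so that the choice $\epsilon_x=\sqrt{\epsilon_N}$ yields the asserted $\sqrt{\epsilon_N}$ bound.
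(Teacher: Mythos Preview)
Your approach via Kuznetsov doubling would in principle work, but it is far more elaborate than what the paper does, and the ``main obstacle'' you identify has a direct resolution that you are overlooking.

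The paper's proof exploits the fact that \emph{both} $u_N$ and $v_{\epsilon_N}$ are smooth. There is therefore no need for doubling of variables at all: one simply subtracts the two equations (in the form \eqref{T_7.1} and \eqref{prob:viscous}), multiplies by a smooth approximation of $\mathrm{sgn}(u_N-v_{\epsilon_N})$, and passes to the limit exactly as in the proof of Lemma~\ref{lem:BV}. The convection, L\'evy, and Laplace terms all drop with the correct sign, and one is left with
\[
\frac{\dif}{\dif t}\|u_N-v_{\epsilon_N}\|_{L^1(\Lambda)}
\leq \epsilon_N\Big\|\sum_{j,k}\partial_jR_N^{j,k}\ast\partial_ku_N\Big\|_{L^1(\Lambda)}
+\|\partial_x\cdot(I-P_N)f(u_N)\|_{L^1(\Lambda)}.
\]
No mollification parameters $\epsilon_0,\epsilon_x$ ever enter.

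The SVV term, which you flag as the obstacle, is handled by writing $\partial_j\partial_kR_N^{j,k}\ast u_N=\partial_jR_N^{j,k}\ast\partial_ku_N$, applying Young's inequality for convolutions, and invoking \eqref{hh1} with $r=s=1$ (not $s=2$): this gives $\|\partial_jR_N^{j,k}\|_{L^1}\leq Cm_N(\log N)^d$. Combined with the $BV$ bound of Lemma~\ref{lem:BV} and the scaling $(\mathbf{A}.4)$--$(\mathbf{A}.5)$, one gets directly
\[
\epsilon_N\Big\|\sum_{j,k}\partial_jR_N^{j,k}\ast\partial_ku_N\Big\|_{L^1(\Lambda)}
\leq C\,\epsilon_N\,m_N(\log N)^d\,\|u_N\|_{BV(\Lambda)}\leq C\sqrt{\epsilon_N}.
\]
This is the estimate you were reaching for through integration by parts against the doubled test function; here it comes out in one line, with the derivative landing on $u_N$ (controlled by $BV$) rather than on a mollifier. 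Your route would recover the same rate, but at the cost of the full Kuznetsov machinery and an extra balancing of parameters that the paper avoids entirely.
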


A direct consequence of Theorems \ref{lemma_rate} and \ref{th:error}, is the
following convergence and error estimate for the SVV method.

\begin{corollary}\emph{(Convergence with rate)}
Let  $(\mathbf{A}.1)$--$(\mathbf{A}.9)$ and \eqref{symm} hold, $u_N$ be the solution of
the SVV method \eqref{2}, and $u$ be an entropy solution of \eqref{1}. Then
\begin{equation*}
\begin{split}
\|u(\cdot,T)-u_N(\cdot,T)\|_{L^1(\Lambda)}\leq C\,\sqrt{\epsilon_N}.
\end{split}
\end{equation*}
\end{corollary}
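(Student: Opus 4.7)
The plan is to prove this corollary by inserting the classical vanishing viscosity approximation $v_{\epsilon_N}$ as an intermediate function between $u$ and $u_N$, and then applying the triangle inequality together with the two error estimates already established.

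More precisely, let $v_{\epsilon_N}$ denote the (smooth) solution of the viscous regularization \eqref{prob:viscous} with viscosity coefficient $\epsilon=\epsilon_N$ and initial datum $u_0$. By the triangle inequality,
\begin{equation*}
\|u(\cdot,T)-u_N(\cdot,T)\|_{L^1(\Lambda)}\leq \|u(\cdot,T)-v_{\epsilon_N}(\cdot,T)\|_{L^1(\Lambda)}+\|v_{\epsilon_N}(\cdot,T)-u_N(\cdot,T)\|_{L^1(\Lambda)}.
\end{equation*}
For the first term, Theorem \ref{lemma_rate} (applied with $\epsilon=\epsilon_N$) bounds the distance between the unique entropy solution $u$ of \eqref{1} and the viscous approximation $v_{\epsilon_N}$ by $C\sqrt{\epsilon_N}$. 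For the second term, Theorem \ref{th:error} compares the SVV approximation $u_N$ with $v_{\epsilon_N}$ and yields a bound of the same order $C\sqrt{\epsilon_N}$. Adding the two contributions gives the claimed estimate.

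The argument is essentially immediate once both Theorems \ref{lemma_rate} and \ref{th:error} are in hand, so there is no real obstacle: one simply has to check that the hypotheses \textbf{(A.1)}--\textbf{(A.9)} and \eqref{symm} assumed in the corollary are exactly those required by the two theorems being invoked (both use the same set of assumptions), and that both $u$ and $v_{\epsilon_N}$ are compared to $u_N$ at the same final time $T$. No further doubling of variables or spectral estimates are needed beyond what is already encoded in the two cited theorems.
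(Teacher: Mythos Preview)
Your proof is correct and matches the paper's approach exactly: the paper states the corollary as ``a direct consequence of Theorems \ref{lemma_rate} and \ref{th:error}'' without further elaboration, and your triangle-inequality argument through the intermediate $v_{\epsilon_N}$ is precisely how one makes this explicit.
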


\begin{proof}[Proof of Theorem \ref{th:error}]
 Since $v_{\epsilon_N}$ is smooth, we can subtract equation
 \eqref{prob:viscous}  from equation \eqref{2} to obtain
\begin{equation*}
\begin{split}
\partial_t(u_N-v_{\epsilon_N})+\partial_x\cdot(f(u_N)-f(v_{\epsilon_N}))-\Levy^\mu[u_N-v_{\epsilon_N}]-\epsilon_N\Delta(u_N-v_{\epsilon_N})\\
\qquad\qquad\qquad\qquad\qquad\qquad=-\epsilon_N\sum_{j,k=1}^d\partial_jR_N^{j,k}\ast\partial_ku_N+\partial_x(I-P_N)f(u_N).
\end{split}
\end{equation*}
As explained in the proof of Lemma \ref{lem:BV}, we can integrate
such an inequality against (a smooth approximation of)
$\mathrm{sgn}(u_N-v_{\epsilon_N})$, to find that (after going to the limit)
\begin{equation*}
\begin{split}
&\frac{\dif}{\dif t}\|u_N-v_{\epsilon_N}\|_{L^1(\Lambda)}\\
&\leq\epsilon_N\Big\|\sum_{j,k=1}^d\partial_jR_N^{j,k}(\cdot,t)\ast\partial_ku_N(\cdot,t)\Big\|_{L^1(\Lambda)}
+\|\partial_x\cdot(I-P_N)f(u_N(\cdot,t))\|_{L^1(\Lambda)}.
\end{split}
\end{equation*}
By \eqref{hh1} with $r=s=1$, ({\bf A}.4),  ({\bf A}.5), and Lemma \ref{lem:BV},
\begin{equation*}
\begin{split}
&\Big\|\sum_{j,k=1}^d\partial_jR_N^{j,k}(\cdot,t)\ast\partial_ku_N(\cdot,t)\Big\|_{L^1(\Lambda)}\leq \Big\|\sum_{j,k=1}^d\partial_jR_N^{j,k}(\cdot,t)\Big\|_{L^1(\Lambda)}\Big\|\partial_ku_N(\cdot,t)\Big\|_{L^1(\Lambda)}\\
&\quad\leq C\, \,m_N\,(\log N)^d\|u_N(\cdot,t)\|_{BV(\Lambda)}\leq C\,\epsilon_N^{-\frac12},
\end{split}
\end{equation*}
so we can integrate in time to obtain
\begin{equation*}
\begin{split}
\|u_N(\cdot,t)-v_{\epsilon_N}(\cdot,t)\|_{L^1(\Lambda)}&\leq C\,\sqrt{\epsilon_N}+\|\partial_x\cdot(I-P_N)f(u_N(\cdot,T))\|_{L^1(D_T)}\\
&\leq C\bigg(\sqrt{\epsilon_N}+\|\partial_x\cdot(I-P_N)f(u_N(\cdot,T))\|_{L^2(D_T)}\bigg).
\end{split}
\end{equation*}
By \eqref{k1},
\begin{equation*}
\begin{split}
\|\partial_x\cdot(I-P_N)f(u_N(\cdot,T))\|_{L^2(D_T)}&\leq
C\,\mathcal{K}_s\,N^{-(s_1+\frac{\theta}{2})}\leq
C\,\mathcal{K}_s\,N^{-\frac{\theta}{2}}= C\,\sqrt{\epsilon_N},
\end{split}
\end{equation*}
since $s_1=s(1-\theta)-1>0$, cf.~\eqref{smooth_req}. The proof is now complete.
\end{proof}

\section{An application: the fractional Burgers' equation}\label{sec:num_ex}

In this section we apply the results of the previous sections to numerically solve the fractional (or fractal) Burgers' equation in $\R^d$,
\begin{equation}\label{frac_bur}
\left\{
\begin{array}{ll}
\partial_tu+u\sum_{j=1}^d\partial_{x_j} u=-(-\Delta)^{\lambda/2}u,&(x,t)\in D_T,\\
u(x,0)=u_{0}(x), &x\in\Lambda,
\end{array}
\right.
\end{equation}
where the fractional Laplacian term $-(-\Delta)^{\lambda/2}u_N=\Levy^{\pi_{\lambda}}[u_N]$ and $\pi_{\lambda}$ has been defined in \eqref{frac_lap}. In this
setting expression \eqref{weights} becomes
\begin{equation*}
\begin{split}
G^{\pi_{\lambda}}(\xi)=c_\lambda\int_{|z|>0}e^{i\xi\cdot
  z}-1-i\xi\cdot z\,\mathbf{1}_{|z|<1}\ \frac {\dif z}{|z|^{d+\lambda}},
\end{split}
\end{equation*}
with $c_\lambda=\lambda\,\Gamma(\frac{d+\lambda}{2})\left(2\pi^{\frac{d}{2}+\lambda}\,\Gamma(1-\frac{\lambda}{2})\right)^{-1}$,
cf.~\cite{Droniou/Imbert}.
We have the following result:

\begin{figure}[t]
\subfigure[$\lambda=1.6$]{
\includegraphics[width=60mm,height=30mm]{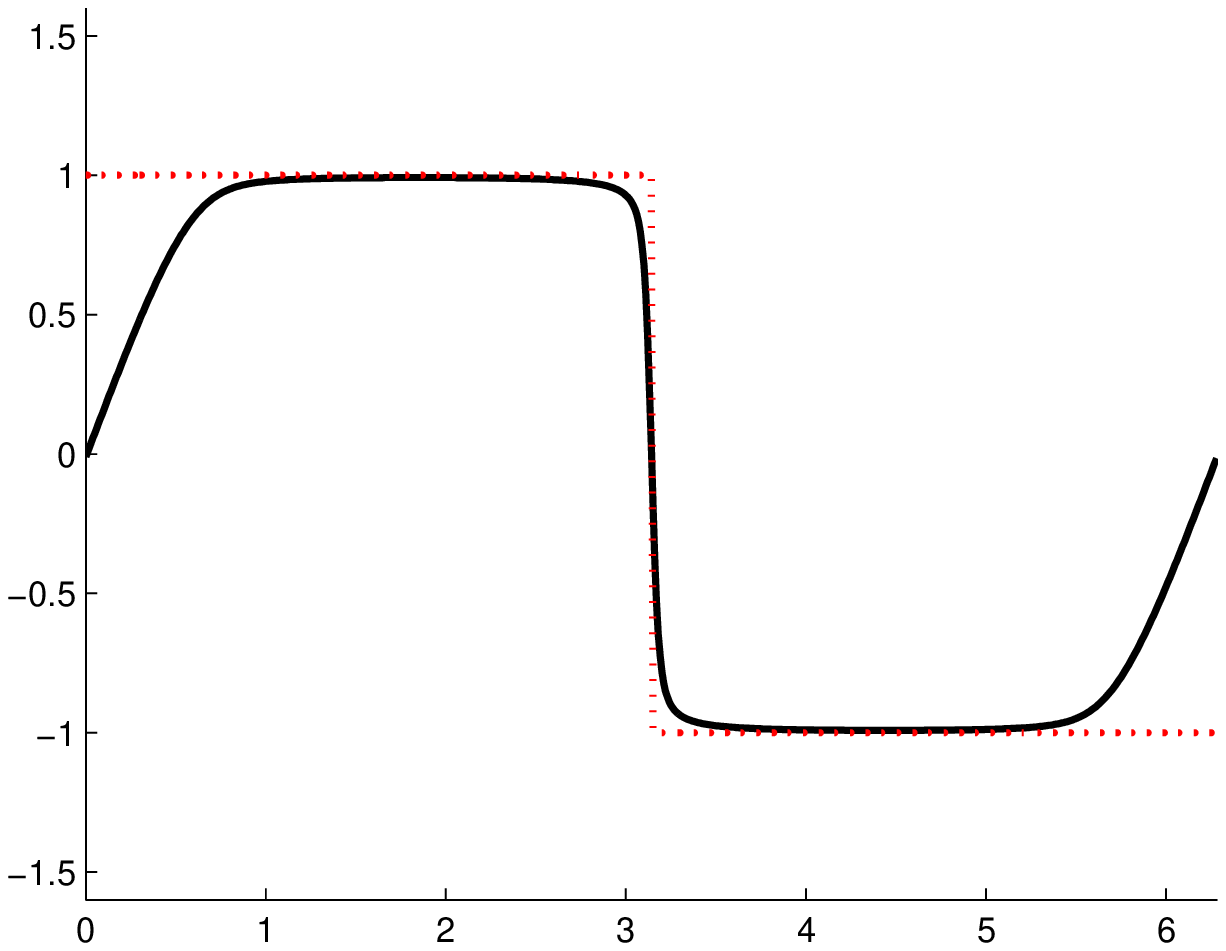}}
\subfigure[$\lambda=1.1$]{
\includegraphics[width=60mm,height=30mm]{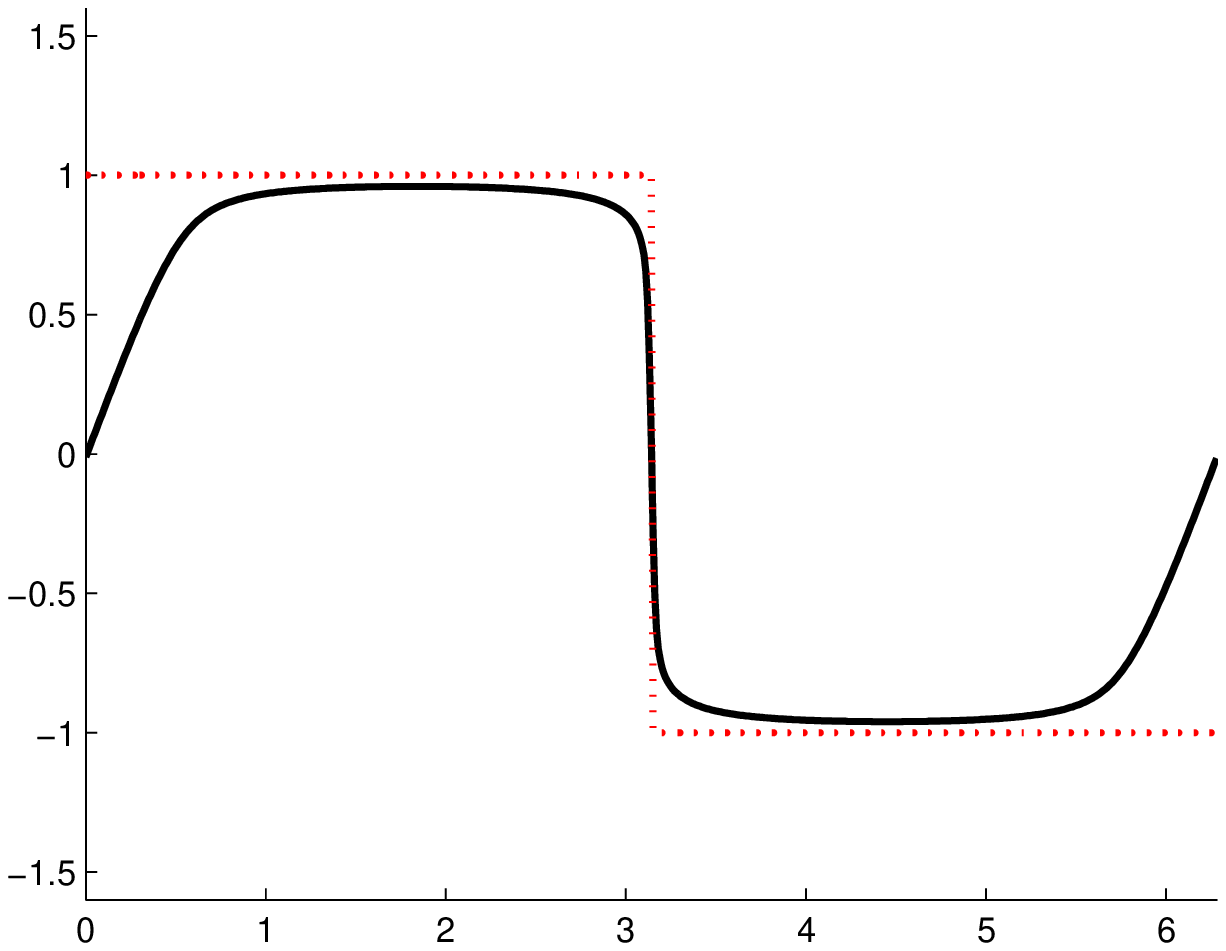}}\\
\subfigure[$\lambda=0.6$]{
\includegraphics[width=60mm,height=30mm]{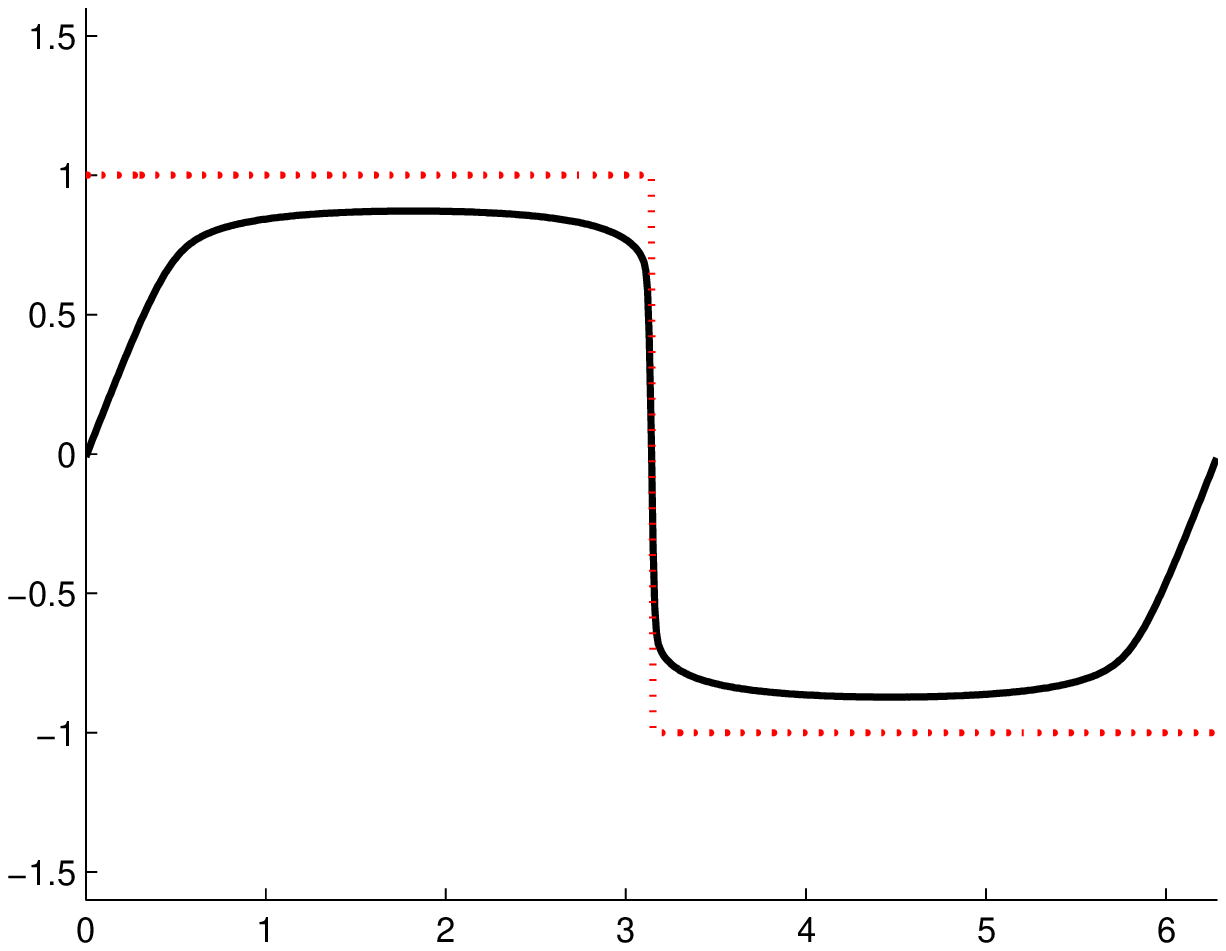}}
\subfigure[$\lambda=0.1$]{
\includegraphics[width=60mm,height=30mm]{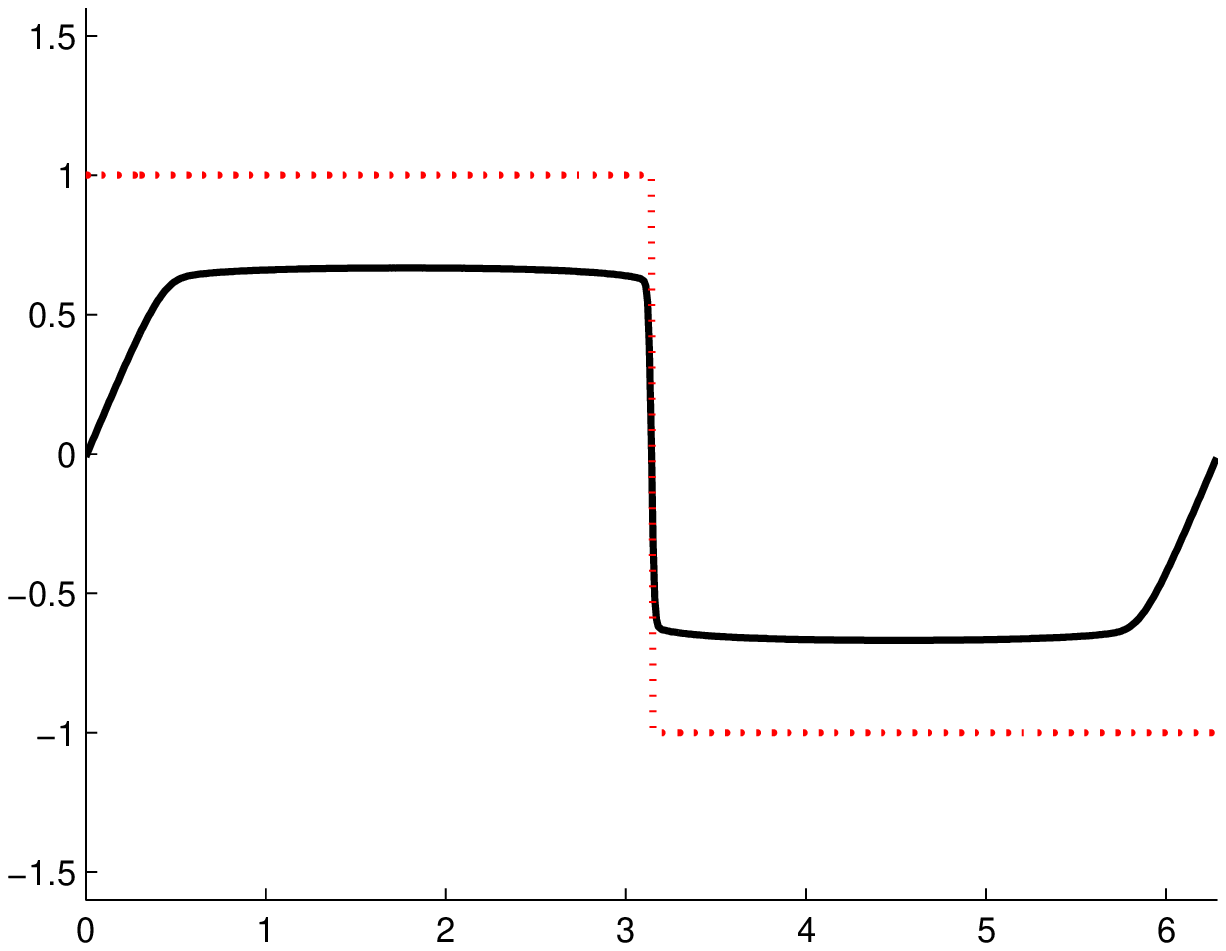}}
\caption{Solutions of system \eqref{3sys} with $N=256$ and $T=0.5$. The piecewise constant initial datum is $u_0(x)=\text{sgn}(\pi-x)$.}\label{fff1}
\end{figure}

\begin{proposition}\label{G}
\begin{equation}\label{weights_ll}
G^{\pi_{\lambda}}(\xi)=\left\{
\begin{split}
&-C_\lambda\,|\xi|^{\lambda}&\text{\emph{for} $d=1$,}\\
&-C_\lambda\,|\xi|^{\lambda}\int_{|y|=1}\dif S_y\,&\text{\emph{for} $d>1$,}
\end{split}
\right.
\end{equation}
where $C_\lambda=2\,c_\lambda\,\lambda^{-1}\int_0^\infty x^{-\lambda}\sin x\, \dif x>0$ and $\int_{|y|=1}\dif S_y=2\pi^{d/2}\,\Gamma^{-1}(\frac{d}{2})$.
\end{proposition}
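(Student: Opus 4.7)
The plan is to compute $G^{\pi_\lambda}(\xi)$ directly from the defining integral \eqref{weights} in three steps: symmetrize to kill imaginary parts, scale $|\xi|$ out, and evaluate a one-dimensional integral by parts.

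First I would use that $\pi_\lambda$ is symmetric since its density $|z|^{-d-\lambda}$ is even. Then in the integrand of \eqref{weights} the terms $i\xi\cdot z\,\mathbf{1}_{|z|<1}$ and $i\sin(\xi\cdot z)$ integrate to zero by oddness, leaving
\begin{equation*}
G^{\pi_\lambda}(\xi)=c_\lambda\int_{|z|>0}(\cos(\xi\cdot z)-1)\,\frac{\dif z}{|z|^{d+\lambda}}.
\end{equation*}
This is an absolutely convergent integral, because $\cos(\xi\cdot z)-1=O(|z|^2)$ near the origin (and $\lambda<2$) while at infinity $|z|^{-d-\lambda}$ is integrable since $\lambda>0$, so Fubini applies freely in what follows.

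Next I would scale out $|\xi|$. For $d=1$, the change of variables $z\mapsto z/|\xi|$ combined with the evenness of $\cos$ gives $G^{\pi_\lambda}(\xi)=c_\lambda|\xi|^\lambda\int_{\mathbb{R}}(\cos z-1)|z|^{-1-\lambda}\dif z=2c_\lambda|\xi|^\lambda\int_0^\infty(\cos z-1)z^{-1-\lambda}\dif z$. For $d>1$, I would pass to spherical coordinates $z=r\omega$ with $\omega\in S^{d-1}$, perform the radial substitution $s=r|\xi\cdot\omega|$ (and use evenness of $\cos$), then invoke rotational invariance of the sphere to extract $|\xi|^\lambda$ from the angular factor $\int_{S^{d-1}}|\xi\cdot\omega|^\lambda\dif S_\omega$. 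The resulting expression isolates the same one-dimensional integral $\int_0^\infty(\cos s-1)s^{-1-\lambda}\dif s$ multiplied by an angular factor that reduces to the surface-area normalization stated in the proposition.

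The remaining one-dimensional integral is handled by a single integration by parts with $u=\cos z-1$ and $\dif v=z^{-1-\lambda}\dif z$, giving
\begin{equation*}
\int_0^\infty(\cos z-1)z^{-1-\lambda}\dif z=-\lambda^{-1}\int_0^\infty z^{-\lambda}\sin z\,\dif z.
\end{equation*}
The boundary term at $z=0$ vanishes because $(\cos z-1)z^{-\lambda}=O(z^{2-\lambda})\to 0$ (this is where we need $\lambda<2$), and at $z=\infty$ because $\cos z-1$ is bounded while $z^{-\lambda}\to 0$. Plugging in the definition of $C_\lambda=2c_\lambda\lambda^{-1}\int_0^\infty x^{-\lambda}\sin x\,\dif x$ yields the claimed formula in both cases.

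The main potential obstacle is the borderline regime $\lambda\in(1,2)$, where the $\dif\mu$-integrand is only conditionally integrable near $z=0$ in the original form of \eqref{weights}; the symmetrization step in the first paragraph is what makes everything honestly absolutely convergent and thereby legitimizes the subsequent changes of variable and integration by parts. Everything else is bookkeeping: checking the boundary terms and applying rotational symmetry on the sphere.
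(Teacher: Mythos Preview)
Your approach is essentially the paper's: kill the imaginary part by symmetry, reduce to a one-dimensional radial integral via spherical coordinates, and integrate by parts to convert the cosine integral into the sine integral defining $C_\lambda$. Your organization is slightly cleaner --- you note absolute convergence of the cosine integral on all of $\mathbb{R}^d$ and do a single integration by parts on $(0,\infty)$, whereas the paper splits the domain at $|z|=1$, integrates by parts on each piece separately, and lets the boundary contributions at $|z|=1$ cancel when the pieces are recombined.

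There is, however, a genuine gap in your $d>1$ case. You assert that the angular factor $\int_{S^{d-1}}|\xi\cdot\omega|^\lambda\,\dif S_\omega$ ``reduces to the surface-area normalization stated in the proposition,'' but this is not true: rotational invariance only gives $\int_{S^{d-1}}|\xi\cdot\omega|^\lambda\,\dif S_\omega = |\xi|^\lambda\int_{S^{d-1}}|\omega_1|^\lambda\,\dif S_\omega$, and $\int_{S^{d-1}}|\omega_1|^\lambda\,\dif S_\omega \neq \int_{S^{d-1}}\dif S_\omega$ for $\lambda\neq 0$ (for instance in $d=2$, $\lambda=1$ it equals $4$, not $2\pi$). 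The paper's own proof makes exactly the same false identification via a specious symmetry step --- replacing the fixed unit vector $\xi/|\xi|$ by the integration variable $y$ inside the integrand --- so the stated formula for $d>1$ is in fact incorrect as written; the correct angular factor is $\int_{|y|=1}|y_1|^\lambda\,\dif S_y$, not the full surface area $\int_{|y|=1}\dif S_y$. You should not try to ``fill in'' this step, since the target identity is wrong.
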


The proof is given at the end of this section. In the above result and in the following, $dS_y$ will denote the surface area measure of the unit sphere
$|y|=1$. Expression \eqref{weights_ll} is the ``Fourier symbol'' of the fractional Laplace operator in our periodic setting. When $\lambda\in(0,1)$, the
integral $\Theta_\lambda=\int_0^\infty x^{-\lambda}\sin x\, \dif x$ is a \emph{generalized Fresnel integral} \cite{Loya} with value
\begin{equation*}
\begin{split}
\Theta_\lambda=\Gamma(1-\lambda)\sin\left(\frac{\pi(1-\lambda)}{2}\right).
\end{split}
\end{equation*}
When $\lambda=1$, $\Theta_\lambda$ is a \emph{Dirichlet integral}
\cite{Jeffreys} and has value $\frac{\pi}{2}$. For $\lambda\in(1,2)$,
the integral $\Theta_\lambda$ has to be evaluated numerically since explicit formulas are not available.

\begin{figure}[t]
\subfigure[$\lambda=1.6$]{
\includegraphics[width=60mm,height=30mm]{gig1.eps}}
\subfigure[$\lambda=1.1$]{
\includegraphics[width=60mm,height=30mm]{gig2.eps}}\\
\subfigure[$\lambda=0.6$]{
\includegraphics[width=60mm,height=30mm]{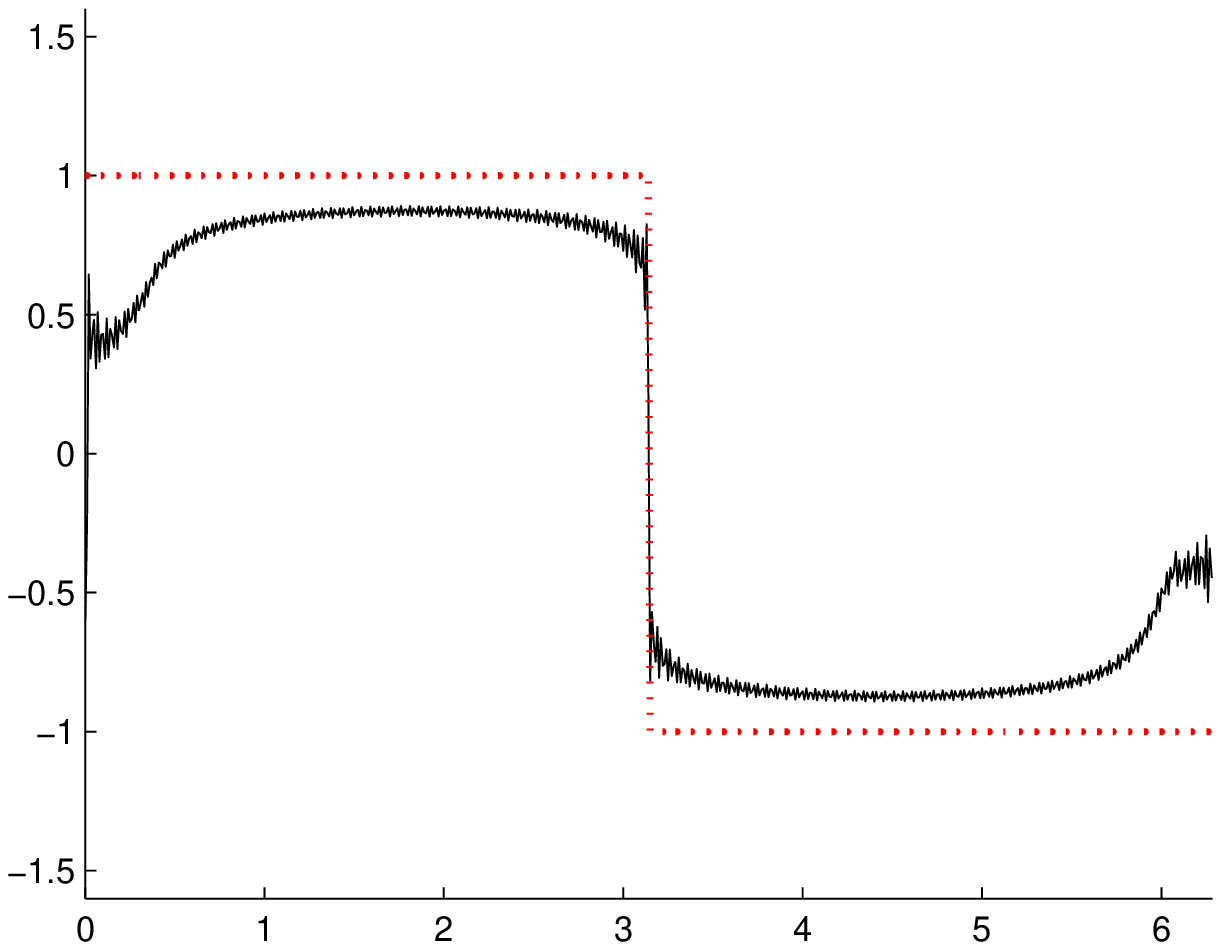}}
\subfigure[$\lambda=0.1$]{
\includegraphics[width=60mm,height=30mm]{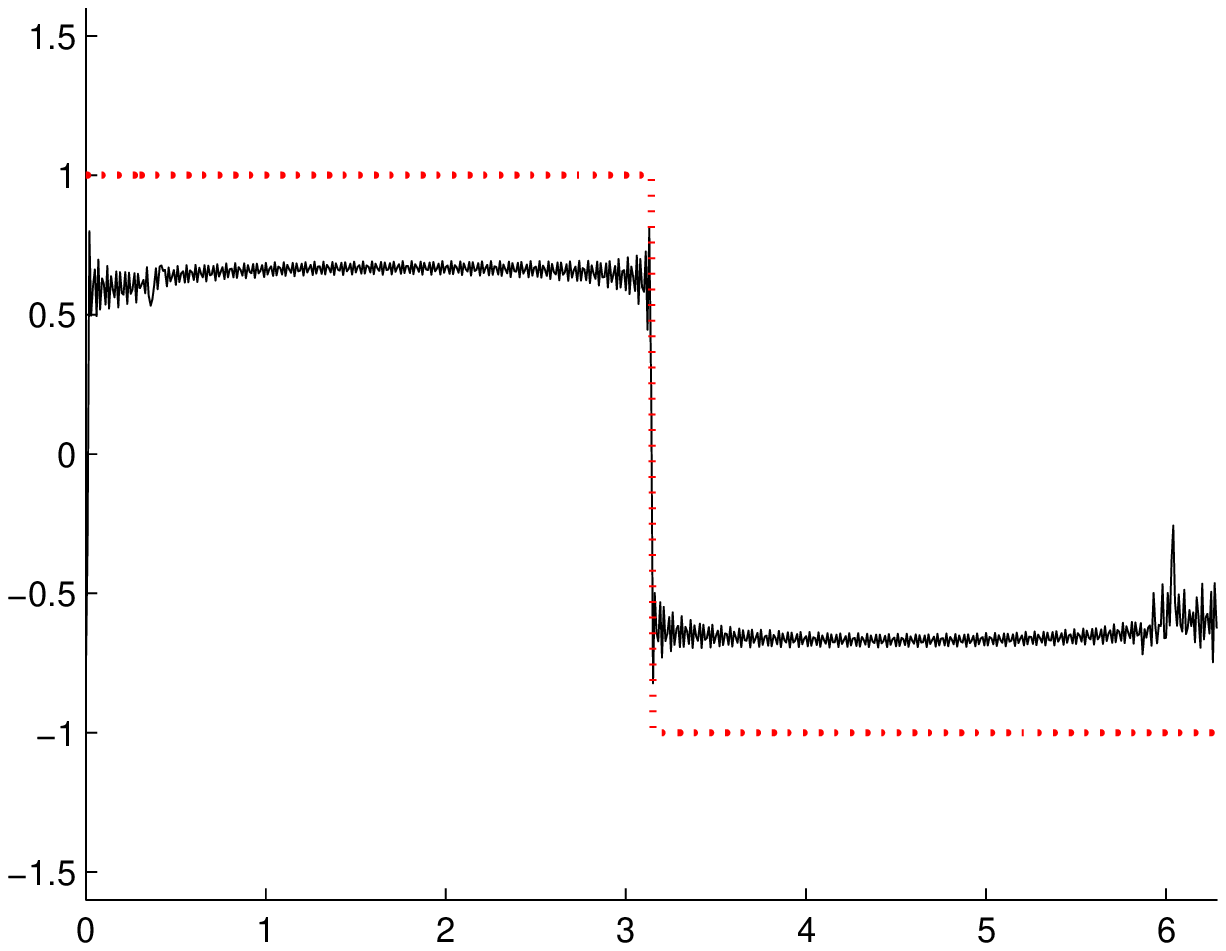}}
\caption{Solutions of system \eqref{3sys} with $N=256$, $T=0.5$, and $\epsilon_N= 0$. The piecewise constant initial datum is
$u_0(x)=\mathrm{sgn}(\pi-x)$.}\label{fff2}
\end{figure}

\begin{remark}
By Proposition \ref{G} there is a positive constant such that
\begin{equation*}
\begin{split}
\int_{\Lambda}\Levy^{\pi_{\lambda}}[u_N(\cdot,t)]\,u_N(\cdot,t)\ \dif x&=-C\,\sum_{|\xi|\leq N}|\xi|^\lambda|\hat{u}_\xi(t)|^2,
\end{split}
\end{equation*}
where right-hand side is a fractional Sobolev semi-norm \cite{Benyu}
\begin{equation*}
\begin{split}
\sum_{|\xi|\leq N}|\xi|^{\lambda}|\hat{u}_\xi(t)|^2=|u_N(\cdot,t)|^2_{H^{\lambda/2}(\Lambda)}.
\end{split}
\end{equation*}
Simple energy estimates can then be used to show that the solutions of \eqref{frac_bur} belong to $H^{\lambda/2}(\Lambda)$, which is more regularity than what
can be expected for general solutions of the pure Burgers' equation ($\mu=0$).
\end{remark}

We now use the SVV method \eqref{2} to work out some approximate solutions of the fractional Burgers' equation \eqref{frac_bur} with $d=1$. Hence $f(u)=u^2/2$
and $\mu=\pi_{\lambda}$ in \eqref{2}. We multiply both sides of \eqref{2} by $e^{-i\xi\,x}$, and integrate over $(0,2\pi)$ to obtain the following system of
ODEs
\begin{equation}\label{3sys}
\begin{split}
\frac{\dif }{\dif t}\hat{u}_\xi (t)+\frac{i\xi }{2}\sum_{\stackrel{|p|,|q|\leq N}{p+q=\xi }}\hat{u}_p(t)\,\hat{u}_q(t) +C_\lambda\,|\xi |^\lambda\hat
u_\xi(t)+\epsilon_N\mathbf{1}_{m_N\leq|\xi|\leq N}|\xi |^2\,\hat Q_{\xi}(t)\,\hat{u}_\xi (t)=0,
\end{split}
\end{equation}
where the Fourier coefficients $\hat Q_{\xi}$ satisfy the assumptions listed in Section \ref{sec:approx} and are chosen as in \cite{Maday/Tadmor} (they vary
continuously between zero and one). In our simulations we have used a fourth order Runge-Kutta solver for \eqref{3sys}.

The results of our numerical simulations can be found in Figure
\ref{fff1} and \ref{fff2}. The results in Figure \ref{fff1}
confirm the convergence of our SVV approximation \eqref{3sys} for all
all values of $\lambda\in(0,2)$. In Figure \ref{fff2} we have solved
the  \eqref{3sys} with $\epsilon_N=0$ (no spectral vanishing
viscosity). For $\lambda>1$,
convergence continues to hold, while for $\lambda< 1$,
convergence fails and spurious Gibbs oscillations appear. This
 is consistent with the theoretical results for fractional conservation laws \cite{Alibaud/Droniou/Vovelle,Droniou/Gallouet/Vovelle}: These
equations admit smooth solutions for $\lambda>1$ (the strong diffusion
case), while shock discontinuities may appear for $\lambda<1$ (the weak
diffusion case).

\begin{proof}[Proof of Proposition \ref{G}]
Let us prove the case $d=1$ first. By Euler's formula, $e^{i\xi
  z}=\cos(\xi z)+i\sin(\xi z)$, we find that
\begin{equation*}
\begin{split}
\int_{|z|<1}\frac{e^{i\xi z}-1-i\xi z}{|z|^{1+\lambda}}\ \dif z&=\int_{|z|<1}\frac{\cos(\xi z)-1}{|z|^{1+\lambda}}\ \dif z +i\int_{|z|<1}\frac{\sin(\xi z)-\xi
z}{|z|^{1+\lambda}}\ \dif z.
\end{split}
\end{equation*}
Taylor expansions show that these integrals are finite. In fact, the
$\sin$-integral is zero since the its integrand is odd.  Integration
by parts then leads to
\begin{equation*}
\begin{split}
\int_{|z|<1}\frac{\cos(\xi z)-1}{|z|^{1+\lambda}}\ \dif z&=2\int_{0}^1\frac{\cos(\xi z)-1}{z^{1+\lambda}}\ \dif z\\
&=-\frac{2}{\lambda z^\lambda}(\cos(\xi z)-1)\Big|_0^1-\frac{2\xi }{\lambda}\int_0^1\frac{\sin(\xi z)}{z^\lambda}\ \dif z\\
&=-\frac{2}{\lambda}(\cos(\xi )-1)-\frac{2\xi }{\lambda}\int_0^1\frac{\sin(\xi z)}{z^\lambda}\ \dif z.
\end{split}
\end{equation*}
Now we consider the integral over $|z|>r$. Again the imaginary part
(the sine part) is zero, and a computation like the one we performed
above reveals that
\begin{equation*}
\begin{split}
\int_{|z|>1}\frac{e^{i\xi z}-1}{|z|^{1+\lambda}}\ \dif z =\frac{2}{\lambda}(\cos(\xi )-1)-\frac{2\xi }{\lambda}\int_1^\infty\frac{\sin(\xi z)}{z^\lambda}\ \dif
z.
\end{split}
\end{equation*}
Note the + sign of the cosine-term! We add these two equations and find that
\begin{equation*}
\begin{split}
G^{\pi_{\lambda}}(\xi )=-\frac{2\xi c_\lambda}{\lambda}\int_0^\infty\frac{\sin(\xi z)}{z^\lambda}\ \dif z.
\end{split}
\end{equation*}
The integral $\int_0^\infty z^{-\lambda}\sin(\xi z)\, \dif z$ is finite and positive for all $\lambda\in(0,2)$ (cf.~\cite{Courant} for details). Whenever $\xi
>0$, we can use the change of variable $\xi z\rightarrow x$ to deduce that
\begin{equation*}
\begin{split}
\int_0^\infty\frac{\sin(\xi z)}{z^\lambda}\ \dif z=\xi ^{\lambda-1}\int_0^\infty\frac{\sin x}{x^\lambda}\ \dif x
\end{split}
\end{equation*}
and thus
\begin{equation*}
\begin{split}
G^{\pi_{\lambda}}(\xi )&=-\frac{2c_\lambda}{\lambda}\,\xi ^{\lambda}\int_0^\infty\frac{\sin x}{x^\lambda}\ \dif x.
\end{split}
\end{equation*}
When $\xi <0$, we use the relation $\sin(-\xi x)=-\sin(\xi x)$ to obtain
\begin{equation*}
\begin{split}
G^{\pi_{\lambda}}(\xi )&=-\frac{2c_\lambda}{\lambda}\,|\xi |^{\lambda}\int_0^\infty\frac{\sin x}{x^\lambda}\ \dif x,
\end{split}
\end{equation*}
and the conclusion for $d=1$ follows.

When $d>1$ we use polar coordinates $x=ry$ for $r>0$ and $|y|=1$, and
we find that
\begin{equation*}
\begin{split}
\int_{|z|<1}\frac{e^{i\xi\cdot z}-1-i\xi\cdot z}{|z|^{d+\lambda}}\
\dif z&=\int_{|y|=1}\int_{0}^1\frac{\cos(\xi\cdot y\,
  r)-1}{r^{d+\lambda}}\ r^{d-1} \dif
r\,\dif S_y,\\
\int_{|z|>1}\frac{e^{i\xi\cdot z}-1}{|z|^{d+\lambda}}\ \dif z&=\int_{|y|=1}\int_{1}^\infty\frac{\cos(\xi\cdot y\, r)-1}{r^{1+\lambda}}\ \dif
r\,\dif S_y.
\end{split}
\end{equation*}
Proceeding as in the $d=1$ case for the $r$-integral with $y$ fixed,
we find that
\begin{equation*}
\begin{split}
G^{\pi_{\lambda}}(\xi )&=-\frac{2c_{\lambda}}{\lambda}\int_{|y|=1}|\xi\cdot y|^{\lambda}\ \dif S_y\int_0^\infty\frac{\sin x}{x^\lambda}\ \dif x\\
&=-\frac{2c_{\lambda}}{\lambda}\,|\xi|^\lambda\int_{|y|=1}\left|\frac{\xi}{|\xi|}\cdot y\right|^{\lambda}\ \dif S_y\int_0^\infty\frac{\sin x}{x^\lambda}\ \dif
x.
\end{split}
\end{equation*}
By symmetry, the value of the $y$-integral is
the same for any $\xi$. Therefore,
\begin{equation*}
\begin{split}
\int_{|y|=1}\left|\frac{\xi}{|\xi|}\cdot y\right|^{\lambda}\ \dif S_y=\int_{|y|=1}\left|y\cdot y\right|^{\lambda}\ \dif S_y=\int_{|y|=1}\dif S_y.
\end{split}
\end{equation*}
The proof for the case $d>1$ is now complete.
\end{proof}

\section{Extension to asymmetric measures $\mu$}\label{sec:asymm_approx}
In this section we show how to modify the arguments of the previous sections to obtain results for a large class of non-symmetric measures $\mu$ including all
the L\'{e}vy measures used in finance. A careful look at the previous arguments shows that symmetry of $\mu$ is used for the sole purpose of having a sign of
the fractional term in the energy inequality (see \eqref{pp3}) in order to prove Theorems \ref{Th:2.1} and \ref{thm2}. This fractional term is
 \begin{equation}\label{lll}
\begin{split}
\iint_{D_T}\Levy^{\mu}[u_N]\,\partial_x^{2\alpha}u_N\ \dif x\,\dif t=\sum_{|\xi|\leq N}G^\mu(\xi)\,|\xi^\alp|^{2}\int_0^T|\hat u_\xi(t)|^2\ \dif t,
\end{split}
\end{equation}
and it is non-positive when $\mu$ is symmetric. In
the general case the sign of the fractional term \eqref{lll} is
unknown, but everything still works if we assume that
$$\mu=\mu_s+\mu_n,$$
for $\mu_s,\mu_n$ satisfying \eqref{rev_2} (i.e. we assume
\eqref{splitt} and \eqref{rev_2}). Note that in this case, we may split
the weights in \eqref{weights} into their symmetric and non-symmetric
parts,
$$G^{\mu}(\xi)=G^{\mu_s}(\xi)+G^{\mu_n}(\xi),$$
 where $G^{\mu_s}(\xi)$ is again real and non-positive, and by \eqref{rev_2},
\begin{equation}\label{rev_1}
\begin{split}
|G^{\mu_n}(\xi)|=\bigg|\int_{|z|>0}e^{i\xi\cdot z}-1-i\xi\cdot z\,\mathbf{1}_{|z|<1}\ \dif\mu_n(z)\bigg|\leq C_{n}\Big(1+|\xi|\Big).
\end{split}
\end{equation}
The main result of this section is the following:

\begin{theorem}\emph{(Convergence with rate)}\label{rate2}
Let  $(\mathbf{A}.1)$--$(\mathbf{A}.9)$, \eqref{splitt} and \eqref{rev_2} hold, $u_N$ be the solution of the SVV method \eqref{2}, and $u$ be an entropy
solution of \eqref{1}. Then,
\begin{equation*}
\begin{split}
\|u(\cdot,T)-u_N(\cdot,T)\|_{L^1(\Lambda)}\leq C\,\sqrt{\epsilon_N}.
\end{split}
\end{equation*}
\end{theorem}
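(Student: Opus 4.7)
The plan is to combine the asymmetric analog of Theorem \ref{th:error} with Theorem \ref{lemma_rate} via the triangle inequality, so I must extend the argument of Sections \ref{sec:trunc_err}--\ref{sec:conv_anal} to the asymmetric setting. Theorem \ref{lemma_rate} already holds for general L\'evy measures: its entropy formulation \eqref{entropy_ineq} and the Kuznetsov-type proof in Appendix \ref{app:2} treat the drift $\gamma_\mu^r$ explicitly. Inspecting Sections \ref{sec:trunc_err}--\ref{sec:conv_anal}, symmetry of $\mu$ is invoked only through the sign of the nonlocal contribution $-\sum_\xi G^\mu(\xi)|\xi^\alp|^2|\hat u_\xi|^2$ in the energy inequality \eqref{f2}; the a priori estimates of Section \ref{sec:aprior} rely only on the convex chain-rule inequality $\eta'(u)\Levy^\mu[u]\leq\Levy^\mu[\eta(u)]$ and the periodic identity $\int_\Lambda\Levy^\mu[\eta(u)]\,\dif x=0$, both of which hold for every L\'evy measure. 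So the real task is to extend Theorem \ref{Th:2.1}.

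Using the splitting $\mu=\mu_s+\mu_n$ I write $G^\mu=G^{\mu_s}+G^{\mu_n}$ and treat the two contributions separately. The symmetric part $-\sum_\xi G^{\mu_s}(\xi)|\xi^\alp|^2|\hat u_\xi|^2$ is non-negative, just as in the symmetric case, and stays on the LHS of \eqref{f2}. The non-symmetric part is moved to the RHS and bounded via \eqref{rev_1} and the weighted Young inequality
\begin{equation*}
(1+|\xi|)|\xi^\alp|^2 \leq \Big(1+\tfrac{1}{2\delta}\Big)|\xi^\alp|^2 + \tfrac{\delta}{2}|\xi|^2|\xi^\alp|^2
\end{equation*}
with $\delta$ proportional to $\epsilon_N$. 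Summation over $|\alp|=s$ produces
\begin{equation*}
\bigg|\sum_{|\alp|=s}\sum_\xi G^{\mu_n}(\xi)|\xi^\alp|^2|\hat u_\xi|^2\bigg| \leq \tfrac{\epsilon_N}{4}\|\del_x^{s+1}u_N\|^2+\tfrac{C}{\epsilon_N}\|\del_x^s u_N\|^2.
\end{equation*}
The first term is absorbed into the $\tfrac{\epsilon_N}{2}\|\del_x^{s+1}u_N\|^2$ already on the LHS of \eqref{f2}, and the second has precisely the same form and $\epsilon_N$-scaling as the $\tfrac{2d^s\mathcal{K}_s^2}{\epsilon_N}\|\del_x^s u_N\|^2$ term on the RHS of \eqref{pp2}; it only enlarges the constant $\mathcal{B}_s$. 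The induction on $s$ in the proof of Theorem \ref{Th:2.1} then yields an identical bound (with the middle term involving $G^\mu$ simply dropped, since it no longer has a definite sign).

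With this extended energy estimate, Theorem \ref{thm2} follows verbatim, the $L^\infty$, $BV$, and time-regularity bounds of Section \ref{sec:aprior} go through unchanged, and so does the proof of Theorem \ref{th:error}: testing the equation for $u_N-v_{\epsilon_N}$ against $\mathrm{sgn}(u_N-v_{\epsilon_N})$ makes $\Levy^\mu[u_N-v_{\epsilon_N}]$ contribute non-positively via the chain-rule inequality for any $\mu$. The triangle inequality then gives
\begin{equation*}
\|u(\cdot,T)-u_N(\cdot,T)\|_{L^1(\Lambda)}\leq\|u(\cdot,T)-v_{\epsilon_N}(\cdot,T)\|_{L^1(\Lambda)}+\|v_{\epsilon_N}(\cdot,T)-u_N(\cdot,T)\|_{L^1(\Lambda)}\leq C\sqrt{\epsilon_N}
\end{equation*}
by Theorem \ref{lemma_rate} and the extended Theorem \ref{th:error}.

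The delicate point is the balancing in the extended energy estimate: the $\epsilon_N^{-1}\|\del_x^s u_N\|^2$ term generated by absorbing the non-symmetric fractional contribution must be of no worse $\epsilon_N$-scaling than the pre-existing RHS term in \eqref{pp2}, or else the induction on $s$ collapses. The linear growth bound $|G^{\mu_n}(\xi)|\leq C(1+|\xi|)$ is precisely what makes this work---any quadratic growth would produce a $\|\del_x^{s+1}u_N\|^2$-coefficient exceeding the available $\epsilon_N\|\del_x^{s+1}u_N\|^2$-budget---explaining why the first-moment condition on $\mu_n$ in \eqref{rev_2} is the natural hypothesis in this class.
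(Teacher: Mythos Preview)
Your proposal is correct and follows essentially the same approach as the paper: the paper likewise isolates the energy estimate (Theorem \ref{Th:2.1}) as the only place symmetry is used, proves an asymmetric analogue (Theorem \ref{Th:8.1}) by splitting $G^\mu=G^{\mu_s}+G^{\mu_n}$, bounding the non-symmetric contribution via \eqref{rev_1} and the same weighted Young inequality with parameter proportional to $\epsilon_N$, and then observes that all downstream results (Theorem \ref{thm2}, the a priori estimates, and Theorem \ref{th:error}) go through with $\mathcal{B}_s$ replaced by an enlarged constant. Your closing remark on why the first-moment condition on $\mu_n$ is the sharp structural hypothesis is a nice addition not made explicit in the paper.
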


To prove this result, we have to modify the arguments of the previous
sections. In view of the above discussion the key result to obtain is
a version of Theorem \ref{Th:2.1} for measures $\mu$
satisfying \eqref{splitt} and \eqref{rev_2}:

\begin{theorem}\label{Th:8.1}
Assume (\textbf{A}.1)--(\textbf{A}.7), \eqref{splitt}, \eqref{rev_2}
hold, and let $u_N$ be the solution of the SVV approximation
\eqref{2}. Then there exists a constant
$\tilde{\mathcal{B}}_s$ (proportional to $1+\Pi_{k=1}^s\mathcal{K}_s$ for $s\geq 1$ and to $\|u_N\|_{L^\infty}$ for
$s=0$, see Theorem \ref{Th:2.1}) such that
\begin{equation*}
\begin{split}
&\epsilon^s_N\|\partial_x^su_N(\cdot,t)\|_{L^2(\Lambda)}+\epsilon_N^{s+\frac{1}{2}}\|\partial_x^{s+1}u_N\|_{L^2(D_T)}
\leq \tilde{\mathcal{B}}_s+ 4\epsilon_N^s\|\partial_x^su_N(\cdot,0)\|_{L^2(\Lambda)}.
\end{split}
\end{equation*}
\end{theorem}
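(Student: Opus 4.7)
The plan is to mirror the induction-on-$s$ proof of Theorem~\ref{Th:2.1}, but to replace the sign argument for the fractional term by a splitting/absorption argument based on the decomposition \eqref{splitt}. All steps of the symmetric proof that do not involve $\Levy^\mu$ carry over unchanged, so I only need to revise the treatment of $\int_\Lambda\Levy^\mu[u_N]\,\partial_x^{2\alp}u_N\,\dif x$ (and, for the base case, the analogous integral with $\partial_x^{2\alp}u_N$ replaced by $u_N$).

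Writing $\Levy^\mu=\Levy^{\mu_s}+\Levy^{\mu_n}$, the fractional contribution splits into a symmetric sum $\sum_{|\xi|\leq N}G^{\mu_s}(\xi)|\xi^\alp|^{2}|\hat u_\xi|^{2}\leq 0$ and a non-symmetric one. The symmetric sum, moved to the left-hand side with the correct sign, is a non-negative quantity which I simply discard; this is precisely why the middle term of \eqref{main_est} is absent from the statement of Theorem~\ref{Th:8.1}. For the non-symmetric sum, I use \eqref{rev_1} together with the elementary splitting $|\xi|\leq\delta^{-1}+\delta|\xi|^{2}$ and $|\xi|^{2}=\sum_{j}|\xi_{j}|^{2}$ to obtain, after choosing $\delta\sim\eps_N$,
\begin{equation*}
\begin{split}
\Bigl|\sum_{|\xi|\leq N}G^{\mu_n}(\xi)\,|\xi^\alp|^{2}|\hat u_\xi|^{2}\Bigr|
\leq\Bigl(C+\frac{C\,C_n^{2}}{\eps_N}\Bigr)\|\partial_x^{\alp}u_N\|^{2}
+\frac{\eps_N}{4d^{s}}\sum_{j=1}^{d}\|\partial_{j}\partial_x^{\alp}u_N\|^{2}.
\end{split}
\end{equation*}
Summing over $|\alp|=s$, the gradient piece is absorbed by the $\tfrac{\eps_N}{2}\|\partial_x^{s+1}u_N\|^{2}$ term already present on the left-hand side of \eqref{pp2}, and the remaining contribution merges with the existing $\tfrac{2d^{s}\mathcal{K}_s^{2}}{\eps_N}\|\partial_x^{s}u_N\|^{2}$ on the right-hand side \emph{with the same $1/\eps_N$ scaling}. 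The induction then closes exactly as in the proof of Theorem~\ref{Th:2.1}: integration in time gives the analogue of \eqref{pp3}, and the inductive bound $\|\partial_x^{s}u_N\|_{L^{2}(D_T)}^{2}\leq C\,\tilde{\mathcal{B}}_{s-1}^{2}\eps_N^{-(2s-1)}$ closes the estimate, yielding the constant $\tilde{\mathcal{B}}_s$ with the extra ``$+1$'' tracking the dependence on $C_n$. The base case $s=0$ is identical in structure, the same splitting being applied to $\int_\Lambda\Levy^\mu[u_N]\,u_N\,\dif x$.

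The step I expect to be the main obstacle, and on which the whole argument hinges, is exactly this absorption of the asymmetric part. It is decisive that \eqref{rev_1} is only \emph{linear} in $|\xi|$: first-order growth is precisely the order that the $L^{2}$-gradient control $\eps_N\|\partial_x^{s+1}u_N\|^{2}$ supplied by the spectral viscosity can swallow via Young's inequality without sacrificing the $\eps_N$-scaling of the final estimate. Any bound super-linear in $|\xi|$ would force strictly worse powers of $\eps_N$ on the right-hand side and the induction would collapse, which is ultimately why the class \eqref{splitt}--\eqref{rev_2} is the natural extension of the symmetric setting. Once Theorem~\ref{Th:8.1} is in hand, the asymmetric analogues of the $L^{\infty}$, $BV$, and time-regularity estimates of Section~\ref{sec:aprior} and of the error estimate of Section~\ref{sec:conv_anal} leading to Theorem~\ref{rate2} follow by inserting the same $\mu=\mu_s+\mu_n$ splitting wherever the symmetry of $\mu$ was previously invoked.
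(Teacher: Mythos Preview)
Your treatment of the inductive step $s\geq 1$ is correct and is essentially the paper's argument: split $G^\mu=G^{\mu_s}+G^{\mu_n}$, keep the symmetric part on the left with the good sign, and absorb the asymmetric part via \eqref{rev_1} and Young's inequality $C_n|\xi|\leq \tfrac{\eps_N}{4}|\xi|^2+\tfrac{C_n^2}{\eps_N}$ into the viscosity term. The $\tfrac{1}{\eps_N}$ factor this produces merges harmlessly with the $\tfrac{d^s\mathcal K_s^2}{\eps_N}$ already present in \eqref{pp2}, and the induction closes exactly as before.

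The gap is in the base case. Applying the same splitting to $\int_\Lambda\Levy^\mu[u_N]\,u_N\,\dif x$ yields, on the right-hand side, a term $(C+CC_n^2/\eps_N)\|u_N\|^2$ in place of the $c_N\|u_N\|^2$ that appeared in the symmetric proof. After time integration this gives
\[
\eps_N\|\partial_x u_N\|_{L^2(D_T)}^2\;\leq\; C\Big(1+\tfrac{1}{\eps_N}\Big)\|u_N\|_{L^\infty}^2+\|u_N(\cdot,0)\|^2,
\]
so that $\eps_N^{1/2}\|\partial_x u_N\|_{L^2(D_T)}\leq C\eps_N^{-1/2}\|u_N\|_{L^\infty}$ rather than $\leq C\|u_N\|_{L^\infty}$. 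In other words, your $\tilde{\mathcal B}_0$ carries an extra factor $\eps_N^{-1/2}\sim N^{\theta/2}$, contradicting the statement that $\tilde{\mathcal B}_0$ is proportional to $\|u_N\|_{L^\infty}$; this $N^{\theta/2}$ then propagates through every step of the induction.

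The paper avoids this by \emph{not} splitting at $s=0$. It observes that the pointwise convexity inequality \eqref{h1} and the periodicity identity \eqref{h2} hold for \emph{arbitrary} L\'evy measures $\mu$, so that
\[
\int_\Lambda \Levy^\mu[u_N]\,u_N\,\dif x\;\leq\;\frac12\int_\Lambda \Levy^\mu[u_N^2]\,\dif x\;=\;0
\]
without any $\eps_N$-penalty. With this in place the $s=0$ case is literally identical to the symmetric proof, and the induction for $s\geq 1$ proceeds by your (correct) absorption argument.
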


We prove this result at the end of this section. Now if we also assume that (\textbf{A}.8) and (\textbf{A}.9) hold, then it easily follows that Theorem
\ref{thm2} still holds if we replace $\mathcal B_s$ by $\tilde{\mathcal B}_s$. At this point the reader may easily check that {\em all} the other results also
hold if we everywhere replace $\mathcal B_s$ by $\tilde{\mathcal B}_s$ -- and hence Theorem \ref{rate2} follows.

\begin{remark}
\label{rem8} A L\'{e}vy measure $\mu$ defined by
$$\dif\mu=g(z)\,\dif\pi_\lambda(z),$$
(see \eqref{asym}) can be written as $\mu=\mu_s+\mu_n$ where
$$\dif\mu_s=g(z)\wedge g(-z)\,\dif\pi_\lambda\quad\text{and}\quad
\dif\mu_n=[g(z)-g(z)\wedge g(-z)]\,\dif\pi_\lambda.$$ Note that $\mu_s,\mu_n\geq0$, $\mu_s$ is symmetric, and that $\mu_n$ satisfies the integrability
condition in \eqref{rev_2} if $g$ is locally Lipschitz: Let $g_n(z)=g(z)-g(z)\wedge g(-z)$ and note that $g_n(0)=0$, hence $g_n(z)=|g_n(z)-g_n(0)|\leq C\,|z|$
for $|z|<1$.
\end{remark}

We now show how to modify the proof of Theorem
\ref{Th:2.1} to prove Theorem \ref{Th:8.1}.
\begin{proof}[Proof of Theorem \ref{Th:8.1}]
Once again we use the shorthand $\|\cdot\|$ instead of $\|\cdot\|_{L^2(\Lambda)}$, and rewrite the SVV approximation \eqref{2} as in \eqref{f1} and
\eqref{T_7.1}. Note that \eqref{h1}
and \eqref{h2} holds for general measures $\mu$, so we find that
$$\int_\Lambda \Levy^{\mu}[u_N]\,u_N\ \dif x\leq \int_\Lambda \Levy^{\mu}[u^2_N]\ \dif x = 0.$$
Hence, spatial integration of \eqref{T_7.1} against $u_N$ yields
\begin{equation*}
\begin{split}
&\frac{1}{2}\frac{\dif}{\dif t}\|u_N\|^2+\epsilon_N\,\|\partial_xu_N\|^2\\
&\leq\epsilon_N\|u_N\|\bigg\|\sum_{j,k=1}^d\del_j\del_kR_N^{j,k}\ast u_N\bigg\|+\sum_{j=1}^d\|\partial_ju_N\|\|(I-P_N)f_j(u_N)\|,
\end{split}
\end{equation*}
and the conclusion in the case $s=0$ follows exactly as in the first part of
the proof of Theorem \ref{Th:2.1}.

Now let $s>0$, and note that by \eqref{rev_1} and Young's inequality,
\begin{equation*}
\begin{split}
\int_\Lambda  \partial_x^{2\alpha}u_N\ \Levy^{\mu_n}[u_N]\ \dif
x&=\sum_{|\xi|\leq N}(-i\xi)^{2\alp}\,G^{\mu_n}(\xi)\,|\hat
u_\xi(t)|^2\\
&\leq \sum_{|\xi|\leq N}C_n\Big(1+|\xi|\Big)|\xi^\alpha|^2\,|\hat u_\xi(t)|^2\\
&\leq \sum_{|\xi|\leq
  N}\left(C_n+\frac{\epsilon_N}{4}\,|\xi|^2+\frac{C_n^2}{\epsilon_N}\right)|\xi^\alpha|^2|\hat
u_\xi(t)|^2.
\end{split}
\end{equation*}
If we take this into account and perform spatial integration of
\eqref{f1} against
$\partial_x^{2\alp}u_N$ for some multi-index $\alp$, we find the
following modified version of \eqref{f2},

\begin{equation*}
\begin{split}&\frac{1}{2}\frac{\dif}{\dif t}\|\partial_x^\alp u_N\|^2
-\sum_{|\xi|\leq N}G^{\mu_s}(\xi)|\xi^\alp|^{2}|\hat u_\xi(t)|^2+\frac{3\,\epsilon_N}{4}\|\del^\alp_x\partial_xu_N\|^2\\
&\leq\epsilon_N\|\partial_x^{\alp}u_N\|\bigg\|\sum_{j,k=1}^d\del_j\del_kR_N^{j,k}\ast
\partial_x^{\alp}u_N\bigg\|\\
&\quad+\|\partial_x^{\alp}\del_xu_N\|\|\partial_x^{|\alp|-1}\partial_x
\cdot P_Nf(u_N)\|+\frac{2\,C_n^2}{\epsilon_N}\|\partial_x^{\alp}u_N\|^2.
\end{split}
\end{equation*}
As in the proof of Theorem \ref{Th:2.1}, we now use \eqref{hh2} and
Young's inequality to bound the first and second
term on the right hand side. The result is that
\begin{equation*}
\begin{split}
&\frac12\frac{\dif}{\dif t}\|\partial_x^\alp u_N\|^2-\sum_{|\xi|\leq N}G^{\mu_s}(\xi)|\xi^\alp|^{2}|\hat u_\xi(t)|^2+\frac{\epsilon_N}{2}\|\del^\alp_x\partial_xu_N\|^2\\
&\qquad\qquad\qquad\qquad\qquad\leq C\,\|\partial_x^{\alp}u_N\|^2+\frac{1}{\epsilon_N}\|\partial_x^{|\alp|}
P_Nf(u_N)\|^2+\frac{2\,C_n^2}{\epsilon_N}\|\partial_x^{\alp}u_N\|^2.
\end{split}
\end{equation*}
Now we sum over all $|\alp|= s$ to find that
\begin{equation*}
\begin{split}
&\frac12\frac{\dif}{\dif t}\|\partial_x^s u_N\|^2-\sum_{|\alp|= s}\sum_{|\xi|\leq N}G^{\mu_s}(\xi)|\xi^\alp|^{2}|\hat u_\xi(t)|^2
+\frac{\epsilon_N}{2}\|\del^{s+1}_xu_N\|^2\\
&\qquad\qquad\qquad\qquad\qquad\leq C\,\|\partial_x^{s}u_N\|^2+\frac{d^s}{\epsilon_N}\|\partial_x^{s}
P_Nf(u_N)\|^2+\frac{2\,C_n^2}{\epsilon_N}\|\partial_x^{s}u_N\|^2.
\end{split}
\end{equation*}
Thanks to \eqref{T_2.2} and \eqref{T_2.3},
\begin{equation*}
\begin{split}
\|\partial_x^{s} P_Nf(u_N)\|\leq \mathcal{K}_{s}\,\|\partial_x^{s}u_N\|+\frac{\mathcal{K}_{s+1}}{N}\,\|\partial_x^{s+1}u_N\|,
\end{split}
\end{equation*}
and hence
\begin{equation*}
\begin{split}
\frac12\frac{\dif}{\dif t}\|\partial_x^s u_N\|^2-\sum_{|\alp|= s}\sum_{|\xi|\leq N}G^{\mu_s}(\xi)|\xi^\alp|^{2}|\hat u_\xi(t)|^2+\left(\frac{\epsilon_N}{2}
-\frac{2d^s\mathcal{K}^2_{s+1}}{N^2\epsilon_N}\right)\|\del_x^{s+1}u_N\|^2\\
\leq
\left(C+\frac{2\,C_n^2+2d^s\mathcal{K}_s^2}{\epsilon_N}\right)\,\|\partial_x^{s}u_N\|^2\leq\frac{2\,C_n^2+3d^s\mathcal{K}_s^2}{\epsilon_N}\,\|\partial_x^{s}u_N\|^2,
\end{split}
\end{equation*}
where the last inequality holds for $N$ big enough.

To conclude, we use \eqref{ass} to obtain
\begin{equation*}
\begin{split}
\frac12\|\partial_x^su_N(\cdot,t)\|^2-\sum_{|\alp|= s}\sum_{|\xi|\leq N}G^{\mu_s}(\xi)|\xi^\alp|^{2}\int_0^t|\hat u_\xi(\tau)|^2\, \dif \tau
+\frac{\epsilon_N}{4}
\|\partial_x^{s+1}u_N\|^2_{L^2(D_T)}\\
\leq \frac{2\,C_n^2+3d^s\mathcal{K}_s^2}{\epsilon_N}\,\|\partial_x^{s}u_N\|^2_{L^2(D_T)}+\frac12\|\partial_x^su_N(\cdot,0)\|^2.
\end{split}
\end{equation*}
The proof is now complete since by induction on $s$,
\begin{equation*}
\begin{split}
\|\partial_x^{s}u_N\|^2_{L^2(D_T)}\leq C\,\tilde{\mathcal{B}}^2_{s-1} \epsilon_N^{-(2s-1)}.
\end{split}
\end{equation*}
\end{proof}

\appendix

\section{Proof of Theorem \ref{th:uniqueness}}\label{app:1}
Let us take $\varphi=\psi(x,y,t,s)$, $u=u(x,t)$ and $v=v(y,s)$. We set
$k=v(y,s)$ in the entropy inequality for $u(x,t)$, and integrate over all
$(y,s)\in Q_T$ to obtain
\begin{equation*}
\begin{split}
&\iint_{D_T}\iint_{D_T}\eta(u(x,t),v(y,s))\,\partial_t\psi(x,y,t,s)\\
&\qquad\qquad+q(u(x,t),v(y,s))\cdot\partial_x\psi(x,y,t,s)\\
&\qquad\qquad+\eta(u(x,t),v(y,s))\,\Levy_r^{\ast,\mu}[\psi(\cdot,y,t,s)](x)\\
&\qquad\qquad+\eta'(u(x,t),v(y,s))\,\Levy^{\mu,r}[u(\cdot,t)](x)\,\psi(x,y,t,s)\\
&\qquad\qquad+\eta(u(x,t),v(y,s))\,\gamma_\mu^r\cdot\partial_x\psi(x,y,t,s)\ \dif x\,\dif t\,\dif y\,\dif s\geq0.
\end{split}
\end{equation*}
In the entropy inequality for $v(y,s)$, we set $k=u(x,t)$ and integrate with
respect to $(x,t)$ to find that
\begin{equation*}
\begin{split}
&\iint_{D_T}\iint_{D_T}\eta(u(x,t),v(y,s))\,\partial_s\psi(x,y,t,s)\\
&\qquad\qquad+q(u(x,t),v(y,s))\cdot\partial_y\psi(x,y,t,s)\\
&\qquad\qquad+\eta(u(x,t),v(y,s))\,\Levy_r^{\ast,\mu}[\psi(x,\cdot,t,s)](y)\\
&\qquad\qquad-\eta'(u(x,t),v(y,s))\,\Levy^{\mu,r}[v(\cdot,s)](y)\,\psi(x,y,t,s)\\
&\qquad\qquad+\eta(u(x,t),v(y,s))\,\gamma_\mu^r\cdot\partial_y\psi(x,y,t,s)\ \dif y\,\dif s\,\dif x\,\dif t\geq0.
\end{split}
\end{equation*}
In the following we need the $\R^{2d}$-operators
\begin{equation*}
\begin{split}
\tilde \Levy^{\mu,r}[\phi(\cdot,\cdot)](x,y)&=\int_{|z|>r}\phi(x+z,y+z)-\phi(x,y)\ \dif \mu(z),\\
\tilde \Levy^{\ast,\mu,r}[\phi(\cdot,\cdot)](x,y)&=\int_{|z|>r}\phi(x-z,y-z)-\phi(x,y)\ \dif \mu(z).
\end{split}
\end{equation*}
With these definitions in mind, we add the two inequalities above and change the order of integration to find that
\begin{equation*}
\begin{split}
&\iint_{D_T}\iint_{D_T}\eta(u(x,t),v(y,s))\, (\partial_t+\partial_s)\psi(x,y,t,s)\\
&\qquad\qquad+q(u(x,t),v(y,s))\cdot(\partial_x+\partial_y)\psi(x,y,t,s)\\
&\qquad\qquad+\eta(u(x,t),v(y,s))\,\Levy_r^{\ast,\mu}[\psi(\cdot,y,t,s)](x)\\
&\qquad\qquad+\eta(u(x,t),v(y,s))\, \Levy_r^{\ast,\mu}[\psi(x,\cdot,t,s)](y)\\
&\qquad\qquad+\eta'(u(x,t),v(y,s))\,\tilde \Levy^{\mu,r}[u(\cdot,t)-v(\cdot,s)](x,y)\,\psi(x,y,t,s)\\
&\qquad\qquad+\eta(u(x,t),v(y,s))\,\gamma_\mu^r\cdot(\partial_x+\partial_y)\psi(x,y,t,s)\ \dif w\geq 0.
\end{split}
\end{equation*}
Here and in the following we use the shorthand $\dif w=\dif x\,\dif
t\,\dif y\,\dif s$. Note that
\begin{equation*}
\begin{split}
\eta'(u(x,t),v(y,s))\, \tilde \Levy^{\mu,r}[u(\cdot,t)-v(\cdot,s)](x,y)\leq \tilde \Levy^{\mu,r}[\eta(u(\cdot,t),v(\cdot,s))](x,y).
\end{split}
\end{equation*}
Moreover, using the change of variables $(x,y)\rightarrow(x-z,y-z)$,
\begin{equation*}
\begin{split}
&\iint_{D_T}\iint_{D_T}\psi(x,y,t,s)\, \tilde \Levy^{\mu,r}[\eta(u(\cdot,t),v(\cdot,s))](x,y)\ \dif w\\
&=\int_{|z|>r}\int_{0}^T\int_{z+\Lambda}\int_{0}^T\int_{z+\Lambda}\eta(u(x,t),v(y,s))\,\psi(x-z,y-z,t,s)\ \dif w\,\dif\mu(z)\\
&\quad-\int_{|z|>r}\iint_{D_T}\iint_{D_T}\eta(u(x,t),v(y,s))\,\psi(x,y,t,s)\ \dif w\,\dif\mu(z),
\end{split}
\end{equation*}
which by periodicity and the definition of $\tilde \Levy^{\ast,\mu,r}$ equals to
\begin{equation*}
\begin{split}
&\int_{|z|>r}\int_{0}^T\int_{\Lambda}\int_{0}^T\int_{\Lambda}\eta(u(x,t),v(y,s))\,\psi(x-z,y-z,t,s)\ \dif w\,\dif\mu(z)\\
&\quad-\int_{|z|>r}\iint_{D_T}\iint_{D_T}\eta(u(x,t),v(y,s))\,\psi(x,y,t,s)\ \dif w\,\dif\mu(z)\\
&=\iint_{D_T}\iint_{D_T}\eta(u(x,t),v(y,s))\, \tilde \Levy^{\ast,\mu,r}[\psi(\cdot,\cdot,t,s)](x,y)\ \dif w.
\end{split}
\end{equation*}
Therefore we have proved so far that
\begin{equation*}
\begin{split}
&\iint_{D_T}\iint_{D_T}\eta(u(x,t),v(y,s))\, (\partial_t+\partial_s)\psi(x,y,t,s)\\
&\qquad\qquad+q(u(x,t),v(y,s))\cdot(\partial_x+\partial_y)\psi(x,y,t,s)\\
&\qquad\qquad+\eta(u(x,t),v(y,s))\,\Levy_r^{\ast,\mu}[\psi(\cdot,y,t,s)](x)\\
&\qquad\qquad+\eta(u(x,t),v(y,s))\, \Levy_r^{\ast,\mu}[\psi(x,\cdot,t,s)](y)\\
&\qquad\qquad+\eta(u(x,t),v(y,s))\,\tilde \Levy^{\ast,\mu,r}[\psi(\cdot,\cdot,t,s)](x,y)\\
&\qquad\qquad+\eta(u(x,t),v(y,s))\,\gamma_\mu^r\cdot(\partial_x+\partial_y)\psi(x,y,t,s)\ \dif w\geq 0.
\end{split}
\end{equation*}
We now send $r\rightarrow0$, remembering the definition of $\gamma_\mu^r$
and defining
\begin{equation*}
\begin{split}
&\tilde \Levy^{\ast,\mu}[\phi(\cdot,\cdot)](x,y)\\
&=\int_{|z|>0}\phi(x-z,y-z)-\phi(x,y)+z\cdot(\partial_x+\partial_y)\phi(x,y)\,\mathbf{1}_{|z|<1}\ \dif \mu(z).
\end{split}
\end{equation*}
The result is
\begin{equation}\label{r2}
\begin{split}
&\iint_{D_T}\iint_{D_T}\eta(u(x,t),v(y,s))\, (\partial_t+\partial_s)\psi(x,y,t,s)\\
&\qquad\qquad+q(u(x,t),v(y,s))\cdot(\partial_x+\partial_y)\psi(x,y,t,s)\\
&\qquad\qquad+\eta(u(x,t),v(y,s))\,\tilde \Levy^{\ast,\mu}[\psi(\cdot,\cdot,t,s)](x,y)\ \dif w\geq 0.
\end{split}
\end{equation}

To conclude, we show how to derive the $L^1$-contraction
\eqref{contraction_periodic} from this inequality by choosing the test
function $\psi$ as
\begin{equation}\label{test_func}
\begin{split}
\psi(x,y,t,s)=\hat\omega_\rho\left(\frac{x-y}{2}\right)\omega_{\delta}\left(\frac{t-s}{2}\right)\phi(t),\quad\rho,\delta>0,
\end{split}
\end{equation}
where $\omega_\delta(\tau)=\frac{1}{\delta}\,\omega(\frac{\tau}{\delta})$ for a nonnegative $\omega\in C_c^\infty(\mathbb{R})$ satisfying
\begin{equation*}
\begin{split}
\omega(-\tau)=\omega(\tau),\quad \omega(\tau)=0 \text{ for all $|\tau|\geq 1$,$\quad$and}\quad \int_{\mathbb{R}}\omega(\tau)\, \dif \tau=1,
\end{split}
\end{equation*}
while $\hat\omega_\rho(x)=\bar{\omega}_\rho(x_1)\cdots \bar{\omega}_\rho(x_d)$ with $\bar\omega_{\rho}(\cdot)$ such that
\begin{equation*}
\begin{split}
\bar\omega_{\rho}(\tau)=\sum_{k\in\mathbb{Z}}\omega_\rho(\tau+2\pi k).
\end{split}
\end{equation*}
Note that $\hat\omega_{\rho}$ is periodic in each
coordinate direction. By a direct computation,
\begin{equation*}
\begin{split}
(\partial_t+\partial_s)\psi(x,y,t,s)&=\hat\omega_\rho\left(\frac{x-y}{2}\right)\omega_{\delta}\left(\frac{t-s}{2}\right)\,\phi'(t),\\
(\partial_x+\partial_y)\psi(x,y,t,s)&=0,\\
\tilde \Levy^\ast[\psi(\cdot,\cdot,t,s)](x,y)&=0.
\end{split}
\end{equation*}
Thus, with this test function $\psi$ at hand, inequality \eqref{r2} becomes
\begin{equation}\label{j2}
\begin{split}
&\iint_{D_T}|u(x,t)-v(y,s)|\,\hat\omega_\rho\left(\frac{x-y}{2}\right)\omega_{\delta}\left(\frac{t-s}{2}\right)\,\phi'(t)\ \dif w\geq0.
\end{split}
\end{equation}
We then go to the limit as $(\rho,\delta)\rightarrow 0$ to find that
\begin{equation}\label{giulia3}
\begin{split}
\iint_{D_T}|u(x,t)-v(x,t)|\, \phi'(t)\ \dif x\,\dif t\geq0.
\end{split}
\end{equation}
To conclude the proof we now take $\phi=\chi_\mu$ for
\begin{align}\label{chi_fun}
\chi_\mu(t)&=\int_{-\infty}^{t}(\omega_\mu(\tau-t_1)-\omega_\mu(\tau-t_2))\ \dif\tau,\qquad 0<t_1<t_2<T.
\end{align}
Loosely speaking, the function $\chi_\mu$ is a smooth approximation of the indicator function $\mathbf{1}_{(t_1,t_2)}$ which is zero near $t=0$ and $t=T$ for
$\mu>0$ small. Since $$\chi_\mu'(t)=\omega_\mu(t-t_1)-\omega_\mu(t-t_2),$$ inequality \eqref{giulia3} reduces to
\begin{equation*}
\iint_{Q_T}|u(x,t)-v(x,t)|\,\omega_\mu(t-t_2)\ \dif t\,\dif x\leq \iint_{Q_T}|u(x,t)-v(x,t)|\,\omega_\mu(t-t_1)\ \dif t\,\dif x.
\end{equation*}
By the integrability of $u$ and $v$ and Fubini's theorem, the function
\begin{equation*}
\Phi(t)=\int_{\Lambda}|u(x,t)-v(x,t)|\ \dif x\in L^1(0,T),
\end{equation*}
and we may write the above inequality as a convolution
$$\Phi*\omega_\mu(t_2) \leq \Phi*\omega_\mu(t_1).$$
By standard
properties of convolutions, $\Phi*\omega_\mu(t)\ra \Phi(t)$ a.e. $t$
as $\mu\ra0$. Hence,
\begin{equation*}
\|(u-v)(\cdot,t_2)\|_{L^1(\Lambda)}\leq\|(u-v)(\cdot,t_1)\|_{L^1(\Lambda)} \quad\text{for a.e.~$t_1,t_2\in(0,T)$}.
\end{equation*}
Finally, the theorem follows from renaming $t_2$ and using part \emph{iii)} in Definition \ref{def:alibaud_periodic} to send $t_1\rightarrow0$.

\section{Proof of Theorem \ref{lemma_rate}}\label{app:2}
\label{App:VV}
The vanishing viscosity problem \eqref{prob:viscous} has a unique
classical solution $u_\epsilon$ for $\eps>0$, see Remark \ref{remVV}.
If we multiply \eqref{prob:viscous} by $\eta'(u_\epsilon)$ for
any smooth convex function $\eta$, use standard manipulations on the
conservation law part combined with the inequalities
\begin{gather*}
\eta'(u_{\epsilon})\Levy^\mu[u_{\epsilon}]=\eta'(u_{\epsilon})\bigg(\Levy^\mu_r[u_{\epsilon}]+\Levy^{\mu,r}[u_{\epsilon}]\bigg)
\leq\Levy_r^\mu[\eta(u_{\epsilon})]+\eta'(u_{\epsilon})\Levy^{\mu,r}[u_{\epsilon}],\\
\eta'(u_{\epsilon})\,\Delta u_{\epsilon}=\Delta\eta(u_{\epsilon})- \epsilon\,\eta''(u_{\epsilon})|\partial_{x}
u_{\epsilon}|^2\leq\Delta\eta(u_{\epsilon}),
\end{gather*}
we find, after integration against any nonnegative test function $\phi$, that $u_\epsilon$ satisfies the (entropy) inequality
\begin{equation*}
\begin{split}
&\iint_{D_T}\eta(u_{\epsilon},k)\,\partial_t\varphi+q(u_{\epsilon},k)\cdot\partial_x\varphi+\eta(u_{\epsilon},k)\,\Levy_r^{\ast,\mu}[\varphi]
+\eta'(u_{\epsilon},k)\,\Levy^{\mu,r}[u_{\epsilon}]\\
&\qquad\qquad\qquad\qquad\qquad\qquad+\eta(u_{\epsilon},k)\,\gamma_\mu^r\cdot\partial_x\varphi+\epsilon\,\eta(u_\epsilon,k)\,\Delta \varphi\ \dif x\, \dif
t\geq0.
\end{split}
\end{equation*}

 From this inequality we proceed as in the proof of the
$L^1$-contraction (Theorem \ref{th:uniqueness}). We take $u=u(x,t)$,
$u_\epsilon=u_\epsilon(y,s)$, and find the inequalities
\begin{equation*}
\begin{split}
&\iint_{D_T}\iint_{D_T}\eta(u(x,t),u_{\epsilon}(y,s))\,\partial_t\psi(x,y,t,s)\\
&\qquad\qquad+q(u(x,t),u_{\epsilon}(y,s))\cdot\partial_x\psi(x,y,t,s)\\
&\qquad\qquad+\eta(u(x,t),u_{\epsilon}(y,s))\,\Levy_r^{\ast,\mu}[\psi(\cdot,y,t,s)](x)\\
&\qquad\qquad+\eta'(u(x,t),u_{\epsilon}(y,s))\,\Levy^{\mu,r}[u(\cdot,t)](x)\,\psi(x,y,t,s)\\
&\qquad\qquad+\eta(u(x,t),u_{\epsilon}(y,s))\,\gamma_\mu^r\cdot\partial_x\psi(x,y,t,s)\ \dif w\geq0.
\end{split}
\end{equation*}
and
\begin{equation*}
\begin{split}
&\iint_{D_T}\iint_{D_T}\eta(u(x,t),u_{\epsilon}(y,s))\,\partial_s\psi(x,y,t,s)\\
&\qquad\qquad+q(u(x,t),u_{\epsilon}(y,s))\cdot\partial_y\psi(x,y,t,s)\\
&\qquad\qquad+\eta(u(x,t),u_{\epsilon}(y,s))\,\Levy_r^{\ast,\mu}[\psi(x,\cdot,t,s)](y)\\
&\qquad\qquad-\eta'(u(x,t),u_{\epsilon}(y,s))\,\Levy^{\mu,r}[u_{\epsilon}(\cdot,s)](y)\,\psi(x,y,t,s)\\
&\qquad\qquad+\eta(u(x,t),u_{\epsilon}(y,s))\,\gamma_\mu^r\cdot\partial_y\psi(x,y,t,s)\
\dif w\\
&\qquad\qquad+ \epsilon\,\eta(u(x,t),u_\epsilon(y,s))\,\Delta_y \psi(x,y,t,s)\ \dif w\geq0.
\end{split}
\end{equation*}
As in the proof of Theorem \ref{th:uniqueness}, we add and manipulate
these to get (see \eqref{r2})
\begin{equation*}
\begin{split}
&\iint_{D_T}\iint_{D_T}\eta(u(x,t),u_{\epsilon}(y,s))\, (\partial_t+\partial_s)\psi(x,y,t,s)\\
&\qquad\qquad+q(u(x,t),u_{\epsilon}(y,s))\cdot(\partial_x+\partial_y)\psi(x,y,t,s)\\
&\qquad\qquad+\eta(u(x,t),u_{\epsilon}(y,s))\,\tilde{\Levy}^{\ast,\mu}[\psi(\cdot,\cdot,t,s)](x,y)\\
&\qquad\qquad+ \epsilon\,\eta(u(x,t),u_\epsilon(y,s))\,\Delta_y \psi(x,y,t,s)\ \dif w\geq0.
\end{split}
\end{equation*}
We now take the test function $\psi$ as in \eqref{test_func} and find that (see \eqref{j2})
\begin{equation}\label{dbl}
\begin{split}
&-\iint_{D_T}\iint_{D_T}|u(x,t)-v(y,s)|\,\hat\omega_\rho\left(\frac{x-y}{2}\right)\omega_{\delta}\left(\frac{t-s}{2}\right)\,\phi'(t)\ \dif w\\
&\qquad\qquad\qquad\qquad\leq \epsilon\iint_{D_T}\iint_{D_T}\eta(u(x,t),u_\epsilon(y,s))\,\Delta_y \psi(x,y,t,s)\ \dif w.
\end{split}
\end{equation}
After an integration by parts, the right-hand side (R.H.S.) is bounded by
\begin{equation*}
\begin{split}
\mathrm{R.H.S.} & \leq \epsilon\iint_{D_T}\iint_{D_T}\Big|\partial_{y}|u(x,t)-u_\epsilon(y,s)|\Big|\Big|\partial_{y}\psi(x,y,t,s)\Big|\, \dif w\\
&\leq\epsilon\iint_{D_T}\iint_{D_T}|\partial_{y}u_\epsilon(y,s)|\left|\partial_{y}\psi(x,y,t,s)\right|\,
\dif w\\
&\leq CT\,|u_0|_{BV(\Lambda)}\frac{\epsilon}{\rho},
\end{split}
\end{equation*}
where the last inequality is a consequence of the estimate
$|u_\eps(\cdot,t)|_{BV(\Lambda)}\leq|u_0|_{BV(\Lambda)}$ and \eqref{test_func}.

To estimate the left hand side (L.H.S.) of \eqref{dbl}, note that
\begin{equation*}
\begin{split}
&-|u_\epsilon(y,s)-u(x,t)|\phi'(t)\\
&\geq -|u_\epsilon(x,t)-u(x,t)|\phi'(t) -|u_\epsilon(x,s)-u_\epsilon(x,t)||\phi'(t)|-|u_\epsilon(y,s)-u_\epsilon(x,s)||\phi'(t)|,
\end{split}
\end{equation*}
and that
\begin{equation*}
\begin{split}
\iint_{D_T}\iint_{D_T}|u_\epsilon(x,s)-u_\epsilon(x,t)|\,\hat\omega_\rho\left(\frac{x-y}{2}\right)\omega_{\delta}\left(\frac{t-s}{2}\right)\,|\phi'(t)|\ \dif
w\stackrel{\delta\rightarrow 0}{\longrightarrow}0
\end{split}
\end{equation*}
and
\begin{equation*}
\begin{split}
&\iint_{D_T}\iint_{D_T}|u_\epsilon(y,s)-u_\epsilon(x,s)|\,\hat\omega_\rho\left(\frac{x-y}{2}\right)\omega_{\delta}\left(\frac{t-s}{2}\right)\,|\phi'(t)|\
\dif w\leq C\,T |u_0|_{BV}\,\rho.
\end{split}
\end{equation*}
Hence we conclude after sending $\delta\ra0$ that
$$-\iint_{D_T}|u_\epsilon(x,t)-u(x,t)|\,\phi'(t)dxdt-C\rho \leq
L.H.S.\ (\leq R.H.S.).$$ The results then follows by setting
$\rho=\sqrt{\epsilon}$ and $\phi=\chi_\mu$ as in \eqref{chi_fun}, and conclude as in the proof
of Theorem \ref{th:uniqueness}: Sending $\mu\ra0$, setting $t_2=t$, and using part \emph{iii)} in Definition \ref{def:alibaud_periodic} to send
$t_1\rightarrow0$.

\section*{Acknowledgement}
We thank Prof.~Chi-Wang Shu for suggesting the topic of this paper to us.

\end{document}